\newtheorem{thm}{Theorem}[section]
\newtheorem{lem}[thm]{Lemma}
\theoremstyle{definition}
\theoremstyle{remark}
\newtheorem{rem}[thm]{Remark}
\numberwithin{equation}{section}
\newcommand{\bbR}{\mathbb{R}}
\newcommand{\va}{\mathbf{a}}
\newcommand{\vb}{\mathbf{b}}
\newcommand{\vc}{\mathbf{c}}
\newcommand{\vn}{\mathbf{n}}
\newcommand{\vp}{\mathbf{p}}
\newcommand{\vq}{\mathbf{q}}
\newcommand{\vt}{\mathbf{t}}
\newcommand{\vu}{\mathbf{u}}
\newcommand{\vv}{\mathbf{v}}
\newcommand{\vx}{\mathbf{x}}
\newcommand{\vtau}{\boldsymbol{\tau}}
\newcommand{\tvq}{\tilde{\vq}}
\newcommand{\tW}{\tilde{W}}
\renewcommand{\div}{\mathrm{div}}
\newcommand{\curl}{\mathrm{curl}}
\newcommand{\grad}{\mathrm{grad}}
\newcommand{\al}{\hspace*{0.6cm}}
\newcommand{\cE}{\mathcal{E}}
\newcommand{\cF}{\mathcal{F}}
\newcommand{\cH}{\mathcal{H}}
\newcommand{\cM}{\mathcal{M}}
\newcommand{\cN}{\mathcal{N}}
\newcommand{\cP}{\mathcal{P}}
\newcommand{\cT}{\mathcal{T}}
\newcommand{\cV}{\mathcal{V}}
\begin{document}

\title[$H(\curl)$ and $H(\div)$ elements on polytopal meshes]{Minimal degree $H(\curl)$ and $H(\div)$ conforming finite elements on polytopal meshes}%
\author{Wenbin Chen}%
\address{Department of Mathematics, Fudan University, Shanghai, China}%
\email{wbchen@fudan.edu.cn}%
\author{Yanqiu Wang} %
\address{Department of Mathematics, Oklahoma State University, Stillwater, OK, USA} %
\email{yanqiu.wang@okstate.edu} %

%\thanks{}%
\subjclass[2000]{Primary 65N30}
\keywords{$H(\curl)$, $H(\div)$, mixed finite
element, finite element exterior calculus, generalized barycentric
coordinates}

%\date{}%
%\dedicatory{}%
%\commby{}%
% ----------------------------------------------------------------
\begin{abstract}
We construct $H(\curl)$ and $H(\div)$ conforming finite elements on
convex polygons and polyhedra with minimal possible degrees of
freedom, i.e., the number of degrees of freedom is equal to the
number of edges or faces of the polygon/polyhedron. The construction
is based on generalized barycentric coordinates and the Whitney
forms. In 3D, it currently requires the faces of the polyhedron be
either triangles or parallelograms. Formula for computing basis
functions are given. The finite elements satisfy discrete de Rham
sequences in analogy to the well-known ones on simplices. Moreover,
they reproduce existing $H(\curl)$-$H(\div)$ elements on simplices,
parallelograms, parallelepipeds, pyramids and triangular prisms.
Approximation property of the constructed elements is also analyzed,
by showing that the lowest-order simplicial
N\'{e}l\'{e}lec-Raviart-Thomas elements are subsets of the
constructed elements on arbitrary polygons and certain polyhedra.
\end{abstract}

\maketitle

\section{Introduction}
On a contractible smooth manifold $T\subset \bbR^m$, it is
well-known \cite{Arnold06, Arnold06b, Arnold09, Arnold10} that the
extended $L^2$ de Rham complex
\begin{equation} \label{eq:deRham}
0 \xrightarrow{\al} \bbR \xrightarrow[\al]{\subset} H\Lambda^0(T)
\xrightarrow[\al]{d} H\Lambda^1(T) \xrightarrow[\al]{d} \cdots
\xrightarrow[\al]{d} H\Lambda^m(T) \xrightarrow{\al} 0,
\end{equation}
is exact, where $d$ is the exterior derivative, and $H\Lambda^k(T)$,
$k=0,\ldots ,m$, are Hilbert spaces containing all differential
$k$-forms $\omega$, such that both $\omega$ and $d\omega$ are in
$L^2$. Using traditional vector proxy notation of differential
forms, the de Rham complex can be expressed in 3D as
$$
0 \xrightarrow{\al} \bbR \xrightarrow[\al]{\subset} H^1(T)
\xrightarrow[\al]{\grad} H(\curl,\,T) \xrightarrow[\al]{\curl}
H(\div,\,T) \xrightarrow[\al]{\div} L^2(T) \xrightarrow{\al} 0,
$$
and in 2D as either one of the following
$$
\begin{aligned}
0 \xrightarrow{\al} \bbR \xrightarrow[\al]{\subset} H^1(T)
\xrightarrow[\al]{\grad} H(\curl,\,T) \xrightarrow[\al]{\curl}
L^2(T) \xrightarrow{\al} 0, \\ 0 \xrightarrow{\al} \bbR
\xrightarrow[\al]{\subset} H^1(T) \xrightarrow[\al]{\curl}
\,H(\div,\,T)\, \xrightarrow[\al]{\div} L^2(T) \xrightarrow{\al} 0,
\end{aligned}
$$
where we conveniently denote the 2D $\curl$ operator by $\curl=
\begin{bmatrix}-\partial_y \\ \partial_x\end{bmatrix}$. Note that the two complexes in 2D are indeed equivalent under
the following mapping
$$
H(\curl,\,T) \xleftrightarrow[\;\chi^{-1}\cdot\;]{\chi\cdot}
H(\div,\,T),\qquad \textrm{where }\chi =
\begin{bmatrix}0&-1\\1&0\end{bmatrix}.
$$
Thus it suffices to only study one of them, and in this paper we
pick the one containing $H(\div,\,T)$.

The idea of finite element exterior calculus is to build finite
dimensional sub-complexes of (\ref{eq:deRham}), and then patch the
local discrete spaces on each mesh element, usually a polytope,
together to obtain the finite element space on the entire mesh. To
build conforming finite element spaces, certain continuity
conditions will be imposed on the boundary of $T$. When $T$ is a
simplex or a hypercube, it is well-known that such sub-complexes can
be built using polynomials, i.e., $\mathcal{P}_r\Lambda^k$,
$\mathcal{P}_r^{-}\Lambda^k$ and $\mathcal{H}_r\Lambda^k$, for $0\le
k\le m$ (see \cite{Arnold06} for definition of these spaces). Here
we are interested in more general polygonal/polyhedral domain $T$,
on which polynomial spaces like $\mathcal{P}_r\Lambda^k$,
$\mathcal{P}_r^{-}\Lambda^k$ and $\mathcal{H}_r\Lambda^k$ are
usually not enough for building conforming finite elements. For
example, in 2D, one can not build $H^1$-conforming, piecewise
linear/bilinear, scalar finite element space on meshes containing
$n$-gons with $n>4$. A solution is to use the generalized
barycentric coordinates: Wachspress, Sibson, harmonic, and mean
value, etc. (see \cite{Floater03, Floater05, Floater15, Joshi07,
Martin08, Sibon90, Wachspress75, Wachspress11, Warren96} and
references therein), which allows one to build $H^1$-conforming
scalar finite element spaces using a larger set of basis functions
\cite{Floater14, Gillette12, Manzini14, Rand13, Sukumar04,
Sukumar06, Talischi09, Talischi10, Wicke07}. For example, the
Wachspress element uses rational functions. We would also like to
mention two methods related to the generalized barycentric
coordinates: the mimetic finite difference method (see the recent
survey paper \cite{Lipnikov14}) and the virtual element method
\cite{Veiga13}. Both methods are defined on general polytopes. Among
them, the lowest order virtual element method is indeed equivalent
to an $H^1$ conforming finite element using a set of harmonic
barycentric coordinates.

Recall the traditional polynomial-valued barycentric coordinates
defined on simplices, generalized barycentric coordinates
$\{\lambda_i\}$, for $i$ from $1$ to the number of vertices, can be
viewed as extensions of traditional barycentric coordinates to a
polytope $T$. According to the construction, they may have some nice
properties, which will be further explained later. In general, we
expect $\{\lambda_i\}$  to form a basis for an $H^1$ conforming
scalar finite element on $T$. Extending such elements to $H(\curl)$
and $H(\div)$ on general polytopes is not easy. As early as in 1988,
researchers have realized the important role of Whitney forms in
constructing vector-valued finite element spaces \cite{Bossavit88}.
The Whitney $1$-form and Whitney $2$-form on simplices are defined,
respectively, by
\begin{align}
W_{ij} &= \lambda_i \nabla \lambda_j - \lambda_j \nabla \lambda_i,
\label{eq:Whitney1}
\\
W_{ijk} &= \lambda_i \nabla \lambda_j\times \nabla \lambda_k +
\lambda_j \nabla \lambda_k\times \nabla \lambda_i + \lambda_k \nabla
\lambda_i\times \nabla \lambda_j. \label{eq:Whitney2}
\end{align}
Formally, by using generalized barycentric coordinates, they can be
extended to general polytopes. There were several pioneering works
on extending the Whitney forms and building $H(\curl)$/$H(\div)$
conforming finite elements over non-simplicial polytopes, including
polygons \cite{Euler06}, rectangular grids \cite{Gradinaru02}, and
pyramids \cite{Hiptmair99}. In recent years, this idea has attracted
more attentions. Gillette and Bajaj \cite{Gillette10, Gillette11}
constructed dual mixed finite elements on polytopal meshs generated
by taking the dual of simplicial meshes. Later in \cite{Bossavit10},
Bossavit constructed edge-based and face-based Whitney forms on
tetrahedra, hexahedra, triangular prisms, and pyramids using
techniques called `conation' and `extrusion'. And in the most recent
work \cite{Gillette14}, Gillette, Rand and Bajaj constructed
$H(\curl)$ and $H(\div)$ conforming finite elements on arbitrary
polytopes using the span of all Whitney $1$-forms and $2$-forms,
respectively. We would also like to mention a few related works not
using the Whitney forms. Kuznetsov and Repin \cite{Kuznetsov04,
Kuznetsov08} constructed $H(\div)$ elements on polytopes with
simplicial refinements by solving a local discrete mixed problem.
Christiansen \cite{Christiansen08} constructed $H(\curl)$ and
$H(\div)$ conforming finite elements on polytopes by using harmonic
basis functions, which are known to be almost non-computable.
Klausen, Rasmussen and Stephansen \cite{Klausen12} directly
constructed $H(\div)$ conforming elements on polygons and simple
polyhedra using generalized barycentric coordinates. A polyhedron in
3D is simple if all its vertices are connected to exactly 3 edges.
The elements constructed in \cite{Klausen12}, although having
minimal degrees of freedom, does not fit easily into a de Rham
sequence.

The main purpose of this paper is to provide a unified,
easy-to-compute, and minimal degree construction of $H(\curl)$ and
$H(\div)$ conforming finite elements on convex polytopes, that
satisfy the discrete de Rham sequence. Let us briefly explain how
our work will be different from the existing results mentioned
above. We aim at building sub-complexes of (\ref{eq:deRham}) using
the minimal amount of basis functions that ensures $H(\curl)$ and
$H(\div)$ conformity. At the same time, we want the element to be
constructed provides at least $O(h)$ approximation rate. Let us
first recall the spaces constructed in \cite{Gillette14}. Define
$$
\mathcal{W}\Lambda^0(T) = span\{\lambda_i\},\quad
\mathcal{W}\Lambda^1(T) = span\{W_{ij}\},\quad
\mathcal{W}\Lambda^2(T) = span\{W_{ijk}\}.
$$
In \cite{Gillette14}, the authors have proved that the above defined
finite element spaces are $H^1$/$H(\curl)$/$H(\div)$ conforming and
contain $\mathcal{P}_1^{-}\Lambda^k(T)$, the lowest-order
N\'{e}d\'{e}lec-Raviart-Thomas spaces on simplices defined as
following:
\begin{equation} \label{eq:WLambdaTincludeP-LambdaT}
\begin{aligned}
\textrm{In 2D:}\qquad\quad \mathcal{W}\Lambda^0(T)  \supseteq\, &\cP_1^-\Lambda^0(T) = span\{1, x,  y\}, \\
\chi(\mathcal{W}\Lambda^1(T))  \supseteq\,  & \cP_1^-\Lambda^1(T) =
\{a\vx + \vc,\textrm{ for } a\in \bbR,\,
\vc\in\bbR^2\}, \\
\textrm{In 3D:}\qquad\quad \mathcal{W}\Lambda^0(T)  \supseteq\, &\cP_1^-\Lambda^0(T) = span\{1, x,  y, z\}, \\
\mathcal{W}\Lambda^1(T)  \supseteq\, & \cP_1^-\Lambda^1(T) =
\{\va\times\vx + \vb,\textrm{ for
}\va,\vb\in\bbR^3\},\\
\mathcal{W}\Lambda^2(T)  \supseteq\, & \cP_1^-\Lambda^2(T) = \{a\vx
+ \vc,\textrm{ for }a\in\bbR,\, \vc\in\bbR^3\}.
\end{aligned}
\end{equation}
Moreover, if $T$ is a simplex, then $\mathcal{W}\Lambda^k(T)$
coincides with $\mathcal{P}_1^{-}\Lambda^k(T)$, i.e., all
$\supseteq$ in the above become $=$.

Clearly, $\mathcal{W}\Lambda^0(T)$ is one of the smallest possible
scalar finite elements on $T$ that can ensure $H^1$ conformity.
However, $\mathcal{W}\Lambda^1(T)$/$\mathcal{W}\Lambda^2(T)$ are far
from the smallest $H(\curl)$/$H(\div)$ conforming elements on
general polytopes. Indeed, denote by $n$ the total number of
vertices in $T$, then one has
$$
\begin{aligned}
\textrm{total number of }W_{ij} &= \begin{pmatrix}n \\
2\end{pmatrix}, \\
\textrm{total number of }W_{ijk} &= \begin{pmatrix}n \\
3\end{pmatrix}.
\end{aligned}
$$
For example, when $T$ is a 3D cube, the above two numbers are $28$
and $56$, respectively. It is not clear whether $W_{ij}$ (or
$W_{ijk}$) are linearly independent or not. Thus one may need to use
the least squares method in the implementation. Comparing to the
known smallest vector-valued finite element complex on a cube
\cite{Nedelec80}, which uses $12$ basis functions in the $H(curl)$
element and $6$ basis functions in the $H(\div)$ element, the spaces
$\mathcal{W}\Lambda^1(T)$ and $\mathcal{W}\Lambda^2(T)$ may contain
too much redundant information.

We want to find the minimal discrete de Rham complex on general
convex polytopes that provides conforming approximations in $H^1$,
$H(\curl)$ and $H(\div)$. Because of the nice property of Whitney
forms \cite{Bossavit88, Whitney57}, we limit our searching in
subsets of $\mathcal{W}\Lambda^k(T)$. That is, we shall construct
finite elements $\mathcal{M}\Lambda^k(T)$ satisfying
$$
\mathcal{M}\Lambda^0(T) =
\mathcal{W}\Lambda^0(T)\qquad\textrm{and}\qquad
\mathcal{M}\Lambda^k(T) \subseteq
\mathcal{W}\Lambda^k(T)\quad\textrm{for }k=1,2.
$$
Now let us look at the smallest possible dimension of
$\mathcal{M}\Lambda^k(T)$, for $k=1,2$, on convex polytopes. We
start from the 3D case. Denote by $\#V$, $\#E$ and $\#F$ the number
of vertices, edges and faces of a convex polyhedron $T$. Then, one
has $dim \mathcal{M}\Lambda^0(T) = dim \mathcal{W}\Lambda^0(T) =
\#V$. To ensure $H(\curl)$ and $H(\div)$ conformity, which in turn
requires tangential components and normal components be continuous
across interfaces, respectively, our conjecture is that
$$
\min \left(\dim \mathcal{M}\Lambda^1(T)\right) = \#E,\qquad \min
\left(\dim \mathcal{M}\Lambda^2(T)\right) = \#F,
$$
which remains to be verified later by construction. According to
Euler's formula for convex polyhedra, one has
$$
\#E = \#V+\#F-2 = (\#V-1) + (\#F-1).
$$
This helps to formulate an exact sequence that we aim to build:
\begin{equation} \label{eq:minimalComplex3D}
0 \xrightarrow{\hspace*{0.2cm}} \bbR \xrightarrow[\al]{\subset}
\begin{matrix} \mathcal{M}\Lambda^0(T)
\\{\scriptstyle  dim = \#V} \end{matrix} \xrightarrow[\al]{\grad}
\begin{matrix}\mathcal{M}\Lambda^1(T) \\ {\scriptstyle dim = \#E
}\\ {\scriptstyle = (\#V-1)+(\#F-1)}
\end{matrix}
\xrightarrow[\al]{\curl} \begin{matrix} \mathcal{M}\Lambda^2(T)
\\{\scriptstyle dim=\#F}\end{matrix} \xrightarrow[\al]{\div} \bbR \xrightarrow{\hspace*{0.2cm}}
0 .
\end{equation}
Analogously, when $T$ is a 2D polygon, we aim at building an exact
sequence
\begin{equation} \label{eq:minimalComplex2D}
0 \xrightarrow{\hspace*{0.2cm}} \bbR \xrightarrow[\al]{\subset}
\begin{matrix} \mathcal{M}\Lambda^0(T)
\\{\scriptstyle  dim = \#V} \end{matrix} \xrightarrow[\al]{\curl}
\begin{matrix}\chi(\mathcal{M}\Lambda^1(T))\\ {\scriptstyle dim = \#E =\#V} \end{matrix}
\xrightarrow[\al]{\div} \bbR \xrightarrow{\hspace*{0.2cm}} 0.
\end{equation}
In the rest of this paper, we shall focus on constructing
$\chi(\mathcal{M}\Lambda^1(T))$ in 2D, as well as $\cM\Lambda^1(T)$
and $\cM\Lambda^2(T)$ in 3D, that make sequences
(\ref{eq:minimalComplex3D})-(\ref{eq:minimalComplex2D}) exact, and
more importantly, allows one to build $H(\curl)$ and $H(\div)$
conforming finite element spaces.

The rest of the paper is organized as follows. We briefly introduce
the definition and properties of the generalized barycentric
coordinates in Section \ref{sec:barycentric}. Assumptions on the
polytope $T$ and the generalized barycentric coordinates will also
be stated in this section. Then, in Section \ref{sec:2D}, we
construct $H(\div)$ conforming element
$\chi(\mathcal{M}\Lambda^1(T))$ for arbitrary convex polygons in 2D,
which satisfies (\ref{eq:minimalComplex2D}). Our formula is
different from, and easier to compute in practice than the 2D
formula given in \cite{Euler06}, although the resulting basis
functions may be identical. Moreover, when the polygon satisfy
certain shape regularity conditions, we prove the optimal mixed
finite element a priori error. Numerical results are presented too.
In Section \ref{sec:3D}, we construct $H(\curl)$ conforming element
$\cM\Lambda^1(T)$ and $H(\div)$ conforming element $\cM\Lambda^2(T)$
in 3D, which satisfy (\ref{eq:minimalComplex3D}). The current
construction only works for polyhedra whose faces are either
triangles or parallelograms. Examples show that our construction, as
one unified formula, reproduces existing minimal degree finite
elements on tetrahedra, rectangular boxes, pyramids, and triangular
prisms. We also construct finite elements on a regular octahedron,
which has never been done before. Moreover, for certain type of
polyhedra, we prove that $\cP_1^-\Lambda^k(T)\subset
\cM\Lambda^k(T)$, for $k=0,1,2$, which will ensure the approximation
property of $\cM\Lambda^k(T)$.

%%%%%%%%%%%%%%%%%%%%%%%%%%%%%%%%%%%%%%%%%%%%%%%%%%%%%%%%%%%%%%%%%%
%%%%%%%%%%%%%%%%%%%%%%%%%%%%%%%%%%%%%%%%%%%%%%%%%%%%%%%%%%%%%%%%%%
%%%%%%%%%%%%%%%%%%%%%%%%%%%%%%%%%%%%%%%%%%%%%%%%%%%%%%%%%%%%%%%%%%
\section{Generalized barycentric coordinates and assumptions} \label{sec:barycentric}
Let $T$ be a convex polygon or polyhedron with $n$ vertices denoted
by $\vv_i$, for $i=1,\ldots,n$. The generalized barycentric
coordinates  are functions $\lambda_i$, for $i=1,\ldots,n$, that
satisfy:
\begin{enumerate}
\item (Non-negativity) All $\lambda_i$, for $1\le i\le n$, have
non-negative value on $T$;
\item (Linear precision) For any linear function $L(\vx)$ defined on $T$, one has
$$
L(\vx) = \sum_{i=1}^n L(\vv_i) \lambda_i(\vx),\qquad \textrm{for all
}\vx\in T.
$$
\end{enumerate}
The linear precision property is indeed equivalent to the
combination of the following two properties: for all $\vx\in T$,
\begin{equation} \label{eq:GBCLinearPrecision}
\sum_{i=1}^n \lambda_i (\vx) = 1, \qquad\quad \sum_{i=1}^n \lambda_i
(\vx) \,\vv_i = \vx.
\end{equation}

Different types of generalized barycentric coordinates have been
proposed in both 2D and 3D. Reader's may refer to \cite{Floater03,
Floater05, Floater15, Joshi07, Martin08, Sibon90, Wachspress75,
Wachspress11, Warren96} and references therein for more details.
When $T$ is a simplex, all generalized barycentric coordinates are
identical, and they are equal to the traditional barycentric
coordinates on simplices, which span the space of all linear
polynomials.

The spaces $\cM\Lambda^k(T)$ that we plan to construct in this paper
will be based on generalized barycentric coordinates. In the
construction, we do require certain properties from generalized
barycentric coordinates, which will be listed below as an
assumption. We will also explain that the following assumption is
not unreasonable, since there exist generalized barycentric
coordinates that satisfy all terms in the assumption. But here we
choose to list them as assumptions instead of limiting our interest
to specific coordinates, in order to provide a more general setting.

\smallskip
{\bf Assumption 1:} {\em There exists a set of generalized
barycentric coordinates on $T$ satisfying the following:
\begin{itemize}
\item (Lagrange property) For all $1\le i,j\le n$, one has $\lambda_i(\vv_j) = \delta_{ij}$, where
$\delta_{ij}$ is the Kronecker delta;
\item (Trace property) In 2D, each $\lambda_i$ is piecewise linear on $\partial T$. In 3D,
each $\lambda_i$ degenerates into a 2D generalized barycentric
coordinate satisfying Assumption 1 on each face of $T$.
\item (Smoothness) For all $1\le i\le n$, one has $\lambda_i\in
C^1(T)$.
\end{itemize}}

\begin{rem} Assumption 1 is not unreasonable. It has been proved in \cite{Floater06} that all 2D
generalized barycentric coordinates on convex polygons satisfy the
Lagrange property and the trace property. In 3D, the Wachspress
coordinates \cite{Warren96} and the mean value coordinates
\cite{Floater05} have been defined and studied. The Wachspress
coordinates satisfy the Lagrange property and the trace property on
all convex polytopes \cite{WarrenUniqueness}. The mean value
coordinates have been proved to satisfy the Lagrange property and
the trace property on convex polytopes whose faces are all
triangular \cite{Floater05}. Both the Wachspress and the mean value
coordinates are known to be in $C^{\infty}$ in the interior of $T$
and have unique continuous extension to $\partial T$.
\end{rem}

In 3D, we will need to impose an additional assumption on the convex
polyhedron $T$, which basically requires each face of $T$ must be
either a triangle or a parallelogram. To explain the reason for such
a restrictive assumption, we first list some special properties of
2D generalized barycentric coordinates on triangles and
parallelograms. Denote by $|\cdot|$ the length/area/volume of an
edge/polygon/polyhedron, depending on the context.

\begin{lem} \label{lem:triangleCross}
Consider a triangle $T$ with vertices $\vv_i$, $1\le i \le 3$,
ordered counter-clockwisely. Denote the barycentric coordinates by
$\lambda_i$, $1\le i\le 3$. Their gradients $\nabla \lambda_i$ are
two-dimensional constant vectors. We have
\begin{equation} \label{eq:triangleface}
%\left( \begin{bmatrix}\nabla\lambda_i \\
%0\end{bmatrix} \times \begin{bmatrix}\nabla\lambda_j\\
%0\end{bmatrix} \right)\cdot \begin{bmatrix}0\\0\\1\end{bmatrix}
det\begin{bmatrix}\nabla\lambda_i\; \nabla\lambda_j\end{bmatrix}
\equiv \frac{1}{2|T|},
\end{equation}
for $(i,j)\in \{(1,2),\, (2,3),\, (3,1)\}$.
\end{lem}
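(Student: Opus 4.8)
The plan is to realize the $2\times 2$ matrix $[\nabla\lambda_i\ \nabla\lambda_j]$ as the transpose of the Jacobian of a natural affine map, and then read off its determinant from an area‑scaling argument. First I would record the two structural facts available on a triangle: since each $\lambda_i$ is affine (the classical barycentric coordinate), $\nabla\lambda_i$ is a constant vector in $\bbR^2$; and differentiating the partition‑of‑unity identity $\lambda_1+\lambda_2+\lambda_3\equiv 1$ from \eqref{eq:GBCLinearPrecision} gives $\nabla\lambda_1+\nabla\lambda_2+\nabla\lambda_3=0$.

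Next, consider the affine map $\Phi:T\to\bbR^2$, $\Phi(\vx)=(\lambda_1(\vx),\lambda_2(\vx))$. By the Lagrange property $\lambda_i(\vv_j)=\delta_{ij}$ together with $\lambda_3=1-\lambda_1-\lambda_2$, the map $\Phi$ sends $\vv_1\mapsto(1,0)$, $\vv_2\mapsto(0,1)$, $\vv_3\mapsto(0,0)$, hence carries $T$ onto the reference triangle $\hat T$ with these three vertices, which has area $\tfrac12$ and, in this vertex order, positive (counter‑clockwise) orientation. The Jacobian of $\Phi$ is the matrix whose rows are $(\nabla\lambda_1)^{T}$ and $(\nabla\lambda_2)^{T}$, so its determinant equals $\det[\nabla\lambda_1\ \nabla\lambda_2]$, a matrix and its transpose having the same determinant. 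An affine map multiplies signed areas by its Jacobian determinant; since $\vv_1,\vv_2,\vv_3$ are ordered counter‑clockwise, the signed area of $T$ is $+|T|$ and that of $\hat T$ is $+\tfrac12$, whence $\det[\nabla\lambda_1\ \nabla\lambda_2]=\tfrac{1/2}{|T|}=\tfrac{1}{2|T|}$. This settles the case $(i,j)=(1,2)$.

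Finally, for $(i,j)=(2,3)$ and $(3,1)$ I would substitute $\nabla\lambda_3=-\nabla\lambda_1-\nabla\lambda_2$ and use the bilinearity and antisymmetry of the $2\times 2$ determinant: $\det[\nabla\lambda_2\ \nabla\lambda_3]=\det[\nabla\lambda_2\ {-\nabla\lambda_1}]=\det[\nabla\lambda_1\ \nabla\lambda_2]$, and likewise $\det[\nabla\lambda_3\ \nabla\lambda_1]=\det[{-\nabla\lambda_2}\ \nabla\lambda_1]=\det[\nabla\lambda_1\ \nabla\lambda_2]$, giving the common value $\tfrac{1}{2|T|}$ in all three cases. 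The computation is entirely elementary; the only delicate point is the sign, i.e. making sure the counter‑clockwise hypothesis is invoked to pin down $+\tfrac{1}{2|T|}$ rather than $-\tfrac{1}{2|T|}$, which is exactly why the area‑scaling step is phrased in terms of \emph{signed} areas.
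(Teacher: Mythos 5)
Your proof is correct, but it takes a genuinely different route from the paper's. The paper argues directly from the geometric description of the gradients: $\nabla\lambda_i$ is the constant inward normal to the edge $e_i$ opposite $\vv_i$ with length $|e_i|/(2|T|)$, and the determinant is evaluated as $|\nabla\lambda_i|\,|\nabla\lambda_j|\sin(\pi-\theta_{ij})$, where $\theta_{ij}$ is the internal angle between $e_i$ and $e_j$; the identity $|T|=\tfrac12|e_i||e_j|\sin\theta_{ij}$ then gives $1/(2|T|)$ for each of the three pairs separately. You instead identify $\det[\nabla\lambda_1\;\nabla\lambda_2]$ with the Jacobian determinant of the affine map $\vx\mapsto(\lambda_1(\vx),\lambda_2(\vx))$ onto the reference triangle and read off its value from signed-area scaling, then dispose of the remaining two pairs in one line via $\nabla\lambda_1+\nabla\lambda_2+\nabla\lambda_3=0$ and antisymmetry of the determinant. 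Your approach handles the sign more transparently (the orientation hypothesis enters exactly once, through the signed area of the reference triangle, whereas the paper's step $\sin(\pi-\theta_{ij})$ tacitly assumes the signed angle from $\nabla\lambda_i$ to $\nabla\lambda_j$ is $+(\pi-\theta_{ij})$, which itself rests on the counter-clockwise ordering), and it unifies the three cases; the paper's computation has the advantage of making explicit the normal-vector picture of $\nabla\lambda_i$, which is the geometric content reused in the subsequent face computations of Section 4. Both arguments are complete and elementary.
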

\begin{proof}
Denote by $e_i$ the edge opposite to vertex $\vv_i$, for $1\le i\le
3$. Clearly, $\nabla\lambda_i$ is a constant vector orthogonal to
$e_i$, pointing from $e_i$ towards $\vv_i$, and with length
$\frac{|e_i|}{2|T|}$. Denote by $\theta_{ij}$ the internal angle of
$T$ formed by edges $e_i$ and $e_j$. Then we have
$$
\begin{aligned}
%\left( \begin{bmatrix}\nabla\lambda_i \\
%0\end{bmatrix} \times \begin{bmatrix}\nabla\lambda_j\\
%0\end{bmatrix} \right) \cdot \begin{bmatrix}0\\0\\1\end{bmatrix} &= \left| \begin{bmatrix}\nabla\lambda_i \\
%0\end{bmatrix} \right| \, \left|\begin{bmatrix}\nabla\lambda_j\\
%0\end{bmatrix} \right| \, \sin(\pi-\theta_{ij}) \\
det\begin{bmatrix}\nabla\lambda_i\; \nabla\lambda_j\end{bmatrix} &=
|\nabla\lambda_i|\,|\nabla\lambda_j|\, \sin(\pi-\theta_{ij}) \\ &=
\frac{|e_i|}{2|T|}\, \frac{|e_j|}{2|T|} \, \sin \theta_{ij} =
\frac{2|T|}{(2|T|)^2} = \frac{1}{2|T|}.
\end{aligned}
$$
This completes the proof of the lemma.
\end{proof}

\begin{lem} \label{lem:parallelogramCross}
Consider a parallelogram $T$ with vertices $\vv_i$, $1\le i \le 4$,
ordered counter-clockwisely. Denote the Wachspress coordinates on
$T$ by $\lambda_i$, $1\le i\le 4$. Their gradients $\nabla
\lambda_i$ are two-dimensional vectors. We have
\begin{equation} \label{eq:parallelogramface}
\begin{aligned}
%\bigg(\begin{bmatrix}\nabla\lambda_1 \\
%0\end{bmatrix} \times \begin{bmatrix}\nabla\lambda_2\\
%0\end{bmatrix} + \begin{bmatrix}\nabla\lambda_3 \\
%0\end{bmatrix} \times \begin{bmatrix}\nabla\lambda_4\\
%0\end{bmatrix}\bigg)\cdot \begin{bmatrix}0\\0\\1\end{bmatrix}
det\begin{bmatrix}\nabla\lambda_1\; \nabla\lambda_2\end{bmatrix} + det\begin{bmatrix}\nabla\lambda_3\; \nabla\lambda_4\end{bmatrix} &\equiv \frac{1}{|T|}, \\
%\bigg( \begin{bmatrix}\nabla\lambda_2 \\
%0\end{bmatrix} \times \begin{bmatrix}\nabla\lambda_3\\
%0\end{bmatrix} + \begin{bmatrix}\nabla\lambda_4 \\
%0\end{bmatrix} \times \begin{bmatrix}\nabla\lambda_1\\
%0\end{bmatrix}\bigg)\cdot \begin{bmatrix}0\\0\\1\end{bmatrix}
det\begin{bmatrix}\nabla\lambda_2\; \nabla\lambda_3\end{bmatrix} +
det\begin{bmatrix}\nabla\lambda_4\; \nabla\lambda_1\end{bmatrix}
&\equiv \frac{1}{|T|}.
\end{aligned}
\end{equation}
\end{lem}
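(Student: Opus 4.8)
The plan is to write down the Wachspress coordinates on a parallelogram explicitly and compute the four determinants directly; the algebra simplifies dramatically because the parallelogram is an affine image of the unit square, and on the unit square the Wachspress coordinates coincide with the standard bilinear nodal functions. First I would recall that for a convex quadrilateral the Wachspress coordinate $\lambda_i$ is (up to normalization) the product of the two edge-line linear forms that do \emph{not} pass through $\vv_i$, divided by the sum of all four such products. When $T$ is a parallelogram the two pairs of opposite edges are parallel, so each of these linear forms is constant along one family of parallel lines; choosing affine coordinates $(s,t)\in[0,1]^2$ adapted to the two edge directions, one finds $\lambda_i$ equals the tensor-product hat functions $\{(1-s)(1-t),\,s(1-t),\,st,\,(1-s)t\}$ in the counter-clockwise order. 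This is where the parallelogram hypothesis is essential: for a general quadrilateral the denominator is a genuinely rational (non-polynomial) function and the identity fails.

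Next I would push forward the chain rule: if $\Phi\colon [0,1]^2\to T$ is the affine map with $\Phi(s,t)=\vv_1+s\,\vb_1+t\,\vb_2$ where $\vb_1=\vv_2-\vv_1$ and $\vb_2=\vv_4-\vv_1$, then $\nabla_{\vx}\lambda_i = (D\Phi)^{-T}\nabla_{(s,t)}\hat\lambda_i$, and consequently for any two indices
\begin{equation*}
\det\begin{bmatrix}\nabla\lambda_i\;\nabla\lambda_j\end{bmatrix}
= \det\!\big((D\Phi)^{-T}\big)\,\det\begin{bmatrix}\hat\nabla\hat\lambda_i\;\hat\nabla\hat\lambda_j\end{bmatrix}
= \frac{1}{|T|}\,\det\begin{bmatrix}\hat\nabla\hat\lambda_i\;\hat\nabla\hat\lambda_j\end{bmatrix},
\end{equation*}
since $|\det D\Phi| = |T|$ and the orientation is fixed by the counter-clockwise ordering. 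Thus both claimed identities reduce to the reference statements
\begin{equation*}
\det\begin{bmatrix}\hat\nabla\hat\lambda_1\;\hat\nabla\hat\lambda_2\end{bmatrix}
+\det\begin{bmatrix}\hat\nabla\hat\lambda_3\;\hat\nabla\hat\lambda_4\end{bmatrix} \equiv 1,
\qquad
\det\begin{bmatrix}\hat\nabla\hat\lambda_2\;\hat\nabla\hat\lambda_3\end{bmatrix}
+\det\begin{bmatrix}\hat\nabla\hat\lambda_4\;\hat\nabla\hat\lambda_1\end{bmatrix} \equiv 1
\end{equation*}
on the unit square, which are a one-line computation: with $\hat\lambda_1=(1-s)(1-t)$, $\hat\lambda_2=s(1-t)$, $\hat\lambda_3=st$, $\hat\lambda_4=(1-s)t$, one has $\hat\nabla\hat\lambda_1=(-(1-t),-(1-s))$, $\hat\nabla\hat\lambda_2=((1-t),-s)$, etc., and each determinant is a monomial in $s,t$ whose pairwise sums telescope to the constant $1$.

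The only genuine obstacle is the first step, namely pinning down that the Wachspress coordinates on a parallelogram are exactly the bilinear functions rather than merely ``affinely equivalent to something on the square'': one must verify that the Wachspress construction is affine-covariant (so that it suffices to check on the unit square) and that on the unit square the rational Wachspress formula collapses to the bilinear one because the denominator becomes constant. Both facts are standard in the Wachspress literature (see \cite{Warren96, WarrenUniqueness}), and once invoked the remainder is the routine determinant bookkeeping sketched above. I would therefore state affine-covariance and the square case as a short preliminary observation, then present the pushforward identity and the unit-square computation to close the proof.
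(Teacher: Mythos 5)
Your proposal is correct and is in essence the same argument as the paper's: the paper also reduces to the fact that on a parallelogram the Wachspress coordinates are the bilinear tensor-product functions in affine (skewed) coordinates, writing them out explicitly for a normalized parallelogram with vertices $(0,0)$, $(h_1,0)$, $(h_1+kh_2,h_2)$, $(kh_2,h_2)$ and verifying (\ref{eq:parallelogramface}) by direct calculation. The only organizational difference is that you route the computation through the unit square via affine covariance and the transformation rule $\det[\nabla\lambda_i\;\nabla\lambda_j]=\det(D\Phi)^{-1}\det[\hat\nabla\hat\lambda_i\;\hat\nabla\hat\lambda_j]$, which cleanly explains the factor $1/|T|$, whereas the paper carries the parameters $h_1,h_2,k$ through the gradients directly; both are sound.
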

\begin{proof}
Without loss of generality, denote the vertices of $T$, in
counter-clockwise order, by $\vv_1: (0,0)$, $\vv_2: (h_1,0)$,
$\vv_3: (h_1+kh_2, h_2)$, $\vv_4: (kh_2, h_2)$, where $h_1$, $h_2$
and $k$ are positive constants. Then, one can easily compute the
Wachspress coordinates and their gradients:
$$
\begin{aligned}
\lambda_1 &= \frac{(h_1 - x+ky)(h_2-y)}{h_1h_2}, \qquad &\nabla
 \lambda_1 &= [\frac{-(h_2-y)}{h_1h_2},\,
 \frac{x-2ky-h_1+kh_2}{h_1h_2} \,]^t, \\
 \lambda_2 &= \frac{(x-ky)(h_2-y)}{h_1h_2}, \qquad &\nabla
   \lambda_2 &= [\frac{h_2-y}{h_1h_2},\,
 \frac{-x+2ky-kh_2}{h_1h_2} \,]^t, \\
\lambda_3 &= \frac{(x-ky)y}{h_1h_2}, \qquad &\nabla
  \lambda_3 &= [\frac{y}{h_1h_2},\,
 \frac{x-2ky}{h_1h_2} \,]^t, \\
\lambda_4 &= \frac{(h_1 - x+ky)y}{h_1h_2}, \qquad &\nabla
  \lambda_4 &= [\frac{-y}{h_1h_2},\,
 \frac{-x+2ky+h_1}{h_1h_2} \,]^t.
\end{aligned}
$$
The lemma hence follows from direct calculation.
\end{proof}

Now we state the additional assumption on $T$:

\smallskip
{\bf Assumption 2:} {\em In 3D, assume each face of polyhedron $T$
be either a triangle or a parallelogram. Moreover, assume the trace
of the generalized barycentric coordinates chosen in our
construction satisfy equations
(\ref{eq:triangleface})-(\ref{eq:parallelogramface}) on the faces of
$T$.}
\smallskip

\begin{rem}
Equations (\ref{eq:triangleface})-(\ref{eq:parallelogramface}) will
later ensure that each function in the constructed $H(\div)$ finite
element space has constant normal components on faces. This is why
we need Assumption 2. Similar but much more complicated equations,
with non-constant right-hand sides, can be obtained for general
polygons. Whether they can be used to build vector-valued finite
elements on polyhedra not satisfying Assumption 2 is a topic for
future research.
\end{rem}

\begin{rem}
According to lemmas
\ref{lem:triangleCross}-\ref{lem:parallelogramCross}, for convex
polyhedra with only triangular faces, both the Wachspress and the
mean value coordinates can be used in the construction; while for
convex polyhedra with both triangular faces and parallelogramal
faces, only the Wachspress coordinates can be used.
\end{rem}

Throughout the rest of this paper, we always assume the polytope, as
well as the generalized barycentric coordinates defined on it,
satisfy Assumptions 1-2. It is known that all polygons and many
polyhedra, including the most frequently used tetrahedra,
parallelepipeds, triangular prisms, and pyramids, have generalized
barycentric coordinates defined on them that satisfy these
assumptions.

%%%%%%%%%%%%%%%%%%%%%%%%%%%%%%%%%%%%%%%%%%%%%%%%%%%%%%%%%%%%%%%%%%
%%%%%%%%%%%%%%%%%%%%%%%%%%%%%%%%%%%%%%%%%%%%%%%%%%%%%%%%%%%%%%%%%%
%%%%%%%%%%%%%%%%%%%%%%%%%%%%%%%%%%%%%%%%%%%%%%%%%%%%%%%%%%%%%%%%%%
\section{Construction in 2D} \label{sec:2D}
Let $T$ be a convex polygon. Denote by $\vv_i$, $1\le i\le n$, the
vertices of $T$ ordered counterclockwisely, and by $e_i$ the edge
connecting vertices $\vv_i$ and $\vv_{i+1}$, where we conveniently
denote $\vv_{j}=\vv_{j\, (mod\, n)}$ when the subscript $j$ is not
in the range of $\{1,\ldots, n\}$. Similar tricks of indexing will
be used frequently without special mentioning. Denote by $\vn_i$ and
$\vt_i$ the unit outward normal and the unit tangent vector in the
counterclockwise orientation on $e_i$. Choose an arbitrary point
$\vx_*$ inside polygon $T$, and denote by $T_i$ the triangle with
base $e_i$ and apex $\vx_*$. Denote by $d_i$ the distance from
$\vx_*$ to $e_i$. Let $|e_i|$, $|T_i|$ and $|T|$ be the length of
$e_i$, the area of $T_i$ and $T$, respectively. It is clear that
$|T_i|=\frac{1}{2}|e_i|d_i$ and $|T|=\sum_{i=1}^n |T_i|$. We use the
standard notation $L^p(T)$, $W^{s,p}(T)$, $H^s(T)$ and $H(\div,T)$,
with $s\in \bbR$ and $1\le p\le \infty$ for different type of
Sobolev spaces, equipped with corresponding innerproducts and norms.
For simplicity, denote by $\|\cdot\|_T$ and $\|\cdot\|_{e_i}$ the
$L^2$ norm on $T$ and $e_i$ respectively, while by $\|\cdot\|_{1,T}$
the $H^1$ norm on $T$. Finally, denote by $h_T$ the diameter of $T$.

%%%%%%%%%%%%%%%%%%%%%%%%%%%%%%%%%%%%%%%%%%%%%%%%%%%%%%%%%%%%%%%%%%
\subsection{Discrete space and basis function}
Recall that $\mathcal{M}\Lambda^0(T) = span\{\lambda_i,\,
i=1,\ldots,n\}$. By (\ref{eq:GBCLinearPrecision}), one has
$\bbR\subset \mathcal{M}\Lambda^0(T)$ and thus the sequence
(\ref{eq:minimalComplex2D}) is obviously exact at the
$\cM\Lambda^0(T)$ node. In order to ensure the exactness at the
$\chi(\mathcal{M}\Lambda^1(T))$ node, we would like to define
$\chi(\mathcal{M}\Lambda^1(T))$ with an orthogonal decomposition,
i.e., the discrete Helmholtz decomposition:
$$
\chi(\mathcal{M}\Lambda^1(T)) = \curl \mathcal{M}\Lambda^0(T) \oplus
(\div^{\dagger}) \bbR,
$$
where $\div^{\dagger}$ stands for a pseudo-inverse of $\div$ under
proper choice of spaces such that $(\div^{\dagger}) \bbR$ contains
functions orthogonal to $\curl \mathcal{M}\Lambda^0(T)$ and with
divergence in $\bbR$. In practice, it is much easier if one relaxes
the orthogonality a little bit through replacing $\oplus$ by $+$,
and thus we consider the following construction:
\begin{equation} \label{eq:HdivSpace2D}
\begin{aligned}
\chi(\mathcal{M}\Lambda^1(T)) &= \curl \mathcal{M}\Lambda^0(T) +
span\{
\vx - \vx_* \} \\
&= span\{\curl \lambda_i, \, i=1,\ldots, n\} + span\{ \vx - \vx_*
\}.
\end{aligned}
\end{equation}
Later we shall show that the above definition is independent of the
choice of $\vx_*$.

By construction, it is clear that $\div \,
\chi(\mathcal{M}\Lambda^1(T)) = \bbR$ and $\curl
\mathcal{M}\Lambda^0(T)\cap span\{ \vx - \vx_* \} = \{\mathbf{0}\}$.
Therefore, the sequence (\ref{eq:minimalComplex2D}) is also exact at
the $\chi(\mathcal{M}\Lambda^1(T))$ node. Now we know that the
entire sequence (\ref{eq:minimalComplex2D}) is exact. By counting
dimensions and since obviously $dim \mathcal{M}\Lambda^0(T) = n$,
one must have $dim\, \chi(\mathcal{M}\Lambda^1(T)) = n$. Next, we
explicitly construct a set of basis for
$\chi(\mathcal{M}\Lambda^1(T))$.

For $1\le i,l\le n$, define
$$b_{i,l} = \delta_{il}|e_l| - |e_i|\frac{|T_l|}{|T|}.$$
The above notation can be extended to indices not in $\{1,\ldots,
n\}$ using modular arithmetic.

\begin{lem}
For each $1\le i\le n$, define $\vq_i\in
\chi(\mathcal{M}\Lambda^1(T))$ by
\begin{equation} \label{eq:2Dbasis}
\vq_i = c_{i,0} (\vx-\vx_*) + \sum_{k=1}^n c_{i,k} \curl \lambda_k,
\end{equation}
where $ c_{i,0} = \frac{|e_i|}{2|T|}$ and $c_{i,k} = -\frac{1}{n}
\sum_{l=1}^{n-1} l \, b_{i,k+l}$. Then, one has
$\vq_i\cdot\vn_j|_{e_j} \equiv \delta_{ij}$ for all $1\le j\le n$,
and the set $\{\vq_i, \, 1\le i\le n\}$ form a basis for
$\chi(\mathcal{M}\Lambda^1(T))$.
\end{lem}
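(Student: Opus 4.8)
The plan is to verify the two claims in turn: first the interpolation property $\vq_i\cdot\vn_j|_{e_j}\equiv\delta_{ij}$, and then deduce that $\{\vq_i\}$ is a basis by a dimension and linear-independence argument. I would begin by computing the normal trace on $e_j$ of each piece in (\ref{eq:2Dbasis}). For the term $c_{i,0}(\vx-\vx_*)$: on edge $e_j$, the vector $\vx-\vx_*$ has normal component $(\vx-\vx_*)\cdot\vn_j = d_j$ constant along $e_j$ (it equals the signed distance from $\vx_*$ to the line containing $e_j$, which is $d_j$ since $\vx_*$ is interior), so this contributes $c_{i,0}\,d_j = \frac{|e_i|}{2|T|}\,d_j = \frac{|e_i|}{|e_j|}\cdot\frac{|e_j|d_j}{2|T|} = \frac{|e_i|}{|e_j|}\cdot\frac{|T_j|}{|T|}$. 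For the $\curl$ terms: by the Trace property in Assumption 1, each $\lambda_k$ restricted to $\partial T$ is piecewise linear, and on $e_j$ it interpolates the values $\lambda_k(\vv_j)=\delta_{kj}$, $\lambda_k(\vv_{j+1})=\delta_{k,j+1}$ at the two endpoints. Since $\curl\lambda_k = \chi\nabla\lambda_k$ and the normal component $(\curl\lambda_k)\cdot\vn_j = (\chi\nabla\lambda_k)\cdot\vn_j = \nabla\lambda_k\cdot\vt_j$ (using $\chi^t\vn_j=\vt_j$ up to sign conventions, which I would pin down carefully), this is the tangential derivative of $\lambda_k$ along $e_j$, hence the constant $\frac{\lambda_k(\vv_{j+1})-\lambda_k(\vv_j)}{|e_j|} = \frac{\delta_{k,j+1}-\delta_{k,j}}{|e_j|}$. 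Therefore on $e_j$,
\begin{equation*}
\vq_i\cdot\vn_j = \frac{|e_i|}{|e_j|}\cdot\frac{|T_j|}{|T|} + \frac{1}{|e_j|}\bigl(c_{i,j+1}-c_{i,j}\bigr).
\end{equation*}

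So the interpolation claim reduces to the identity $c_{i,j+1}-c_{i,j} = |e_j|\delta_{ij} - |e_i|\frac{|T_j|}{|T|} = b_{i,j}$. This is a purely combinatorial fact about the prescribed coefficients $c_{i,k} = -\frac{1}{n}\sum_{l=1}^{n-1} l\, b_{i,k+l}$, and I expect this telescoping-type check to be the main computational step, though not a deep obstacle. One writes $c_{i,j+1}-c_{i,j} = -\frac{1}{n}\sum_{l=1}^{n-1} l\,(b_{i,j+1+l}-b_{i,j+l})$ and performs an Abel summation / index shift; the key input is that $\sum_{m=1}^n b_{i,m} = |e_i| - |e_i|\frac{\sum_m|T_m|}{|T|} = |e_i| - |e_i| = 0$, i.e.\ the row sums of $b$ vanish, which is exactly where $|T|=\sum_m|T_m|$ enters. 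Using this, the difference of the two weighted sums collapses to $b_{i,j}$ after accounting for the wrap-around term. I would present this as a short lemma-internal computation.

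Once $\vq_i\cdot\vn_j|_{e_j}\equiv\delta_{ij}$ is established, linear independence is immediate: if $\sum_i \alpha_i\vq_i = \mathbf 0$, then dotting with $\vn_j$ and restricting to $e_j$ gives $\alpha_j=0$ for every $j$. Hence $\{\vq_i\}_{i=1}^n$ is a linearly independent subset of $\chi(\mathcal{M}\Lambda^1(T))$. Since each $\vq_i$ lies in $\chi(\mathcal{M}\Lambda^1(T))$ by its very form (a combination of $\vx-\vx_*$ and the $\curl\lambda_k$), and since $\dim\chi(\mathcal{M}\Lambda^1(T)) = n$ was already shown above via exactness of (\ref{eq:minimalComplex2D}) and $\dim\mathcal{M}\Lambda^0(T)=n$, the set $\{\vq_i\}$ is a basis. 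I would also remark that the normal-trace formula shows these basis functions, and hence the space $\chi(\mathcal{M}\Lambda^1(T))$ itself, are independent of the choice of interior point $\vx_*$ — the functionals $\vv\mapsto \vv\cdot\vn_j|_{e_j}$ are well defined on the space and $\{\vq_i\}$ is the dual basis to them, which is $\vx_*$-independent. The only genuinely delicate point to get right is the sign bookkeeping relating $\chi$, the $2$D $\curl$, the outward normals $\vn_j$, the tangents $\vt_j$, and the counterclockwise vertex ordering; everything else is routine once the combinatorial identity $c_{i,j+1}-c_{i,j}=b_{i,j}$ is in hand.
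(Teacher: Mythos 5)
Your proof follows the paper's argument exactly: compute the normal trace of each term on $e_j$, reduce the interpolation property to a difference identity for the coefficients $c_{i,k}$ via the vanishing row sums $\sum_{l=1}^n b_{i,l}=0$, and then get linear independence by testing against the normal traces and invoke the dimension count $\dim\chi(\mathcal{M}\Lambda^1(T))=n$. One bookkeeping note: with the paper's orientation conventions one has $(\curl\lambda_k)\cdot\vn_j|_{e_j} = -\partial\lambda_k/\partial\vt_j|_{e_j}$, so the combinatorial identity actually needed is $c_{i,j}-c_{i,j+1}=b_{i,j}$ (which is what the Abel summation/index shift yields), not $c_{i,j+1}-c_{i,j}=b_{i,j}$ as you wrote; your two sign slips cancel, and since you explicitly flagged the sign conventions as the point to pin down, the argument is sound.
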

\begin{proof}
Notice that for all $1\le k\le n$, one has
$$
\sum_{l=1}^n b_{i,k+l} = \sum_{l=1}^n b_{i,l} = |e_i| - |e_i|
\sum_{l=1}^n \frac{|T_l|}{|T|} = 0,
$$
which implies that
$$
\begin{aligned}
c_{i,k}-c_{i,k+1} &= -\frac{1}{n} \left(\sum_{l=1}^{n-1} l\,
b_{i,k+l} - \sum_{l=1}^{n-1} l\, b_{i,k+l+1}\right)
= -\frac{1}{n} \left(\sum_{l=1}^n b_{i, k+l} - n b_{i,k +n } \right) \\
&= -\frac{1}{n} \left(0 - n b_{i,k } \right) = b_{i,k}.
\end{aligned}
$$
Therefore, by the definition of generalized barycentric coordinates
and Assumption 1, we have
$$
\begin{aligned}
\vq_i\cdot \vn_j|_{e_j} &= c_{i,0} (\vx-\vx_*)\cdot \vn_j|_{e_j} +
\sum_{k=1}^n c_{i,k} \curl \lambda_k \cdot \vn_j|_{e_j} = c_{i,0}\,
d_j
   - \sum_{k=1}^n c_{i,k} \left.\frac{\partial \lambda_k}{\partial
   \vt_j}\right|_{e_j} \\
&\equiv c_{i,0} \frac{2|T_j|}{|e_j|} - \left(-\frac{c_{i,j}}{|e_j|}
+ \frac{c_{i,j+1}}{|e_j|} \right) = \frac{1}{|e_{j}|}
\left(\frac{|e_i|}{|T|}|T_j| + b_{i,j} \right)
\\
&= \delta_{ij}.
\end{aligned}
$$
The set $\{\vq_i,\,1\le i\le n\}$ is linearly independent, because
$\sum_{i=1}^n a_i \vq_i = \mathbf{0}$ implies that $0 =
(\sum_{i=1}^n a_i \vq_i )\cdot \vn_j|_{e_j} = a_j$ for all $1\le
j\le n$.  Since $\dim\, \chi(\mathcal{M}\Lambda^1(T))=n$, the set
$\{\vq_i,\,1\le i\le n\}$ must form a basis for
$\chi(\mathcal{M}\Lambda^1(T))$. This completes the proof of the
lemma.
\end{proof}

\begin{rem}
From the basis it is clear that for any function $\vq\in
\chi\left(\mathcal{M}\Lambda^1(T)\right)$, the normal component
$\vq\cdot\vn$ is piecewise constant on $\partial T$. Moreover, the
normal components on edges form a unisolvant set of degrees of
freedom for $\chi\left(\mathcal{M}\Lambda^0(T)\right)$. Such a
choice of degrees of freedom guarantees that one can build $H(\div)$
conforming finite element spaces on general polygonal meshes using
$\chi\left(\mathcal{M}\Lambda^1(T)\right)$.
\end{rem}

%%%%%%%%%%%%%%%%%%%%%%%%%%%%%%%%%%%%%%%%%%%%
\begin{rem}
When $T$ is a triangle, all currently known generalized barycentric
coordinates degenerate to the unique triangular barycentric
coordinates $\{\lambda_1,\, \lambda_2,\, \lambda_3\}$. In this case,
the space $\mathcal{M}\Lambda^0(T)$ is identical to $span\{1,x,y\}$,
and consequently the space
$\chi\left(\mathcal{M}\Lambda^1(T)\right)$ is identical to
$\mathcal{P}_1^{-}\Lambda^1(T)$, the lowest-order Raviart-Thomas
finite element on triangles. When $T$ is a rectangle and
$\lambda_i$'s are chosen to be the Wachspress coordinates, the space
$\mathcal{M}\Lambda^0(T)$ is identical to $span\{1,x,y,xy\}$, and
consequently $\chi\left(\mathcal{M}\Lambda^1(T)\right)$ is identical
to $\mathcal{H}_1\Lambda^1(T)$, the lowest-order Raviart-Thomas
finite element on rectangles. In this sense, the space
$\chi\left(\mathcal{M}\Lambda^1(T)\right)$ can be viewed as the
extension of the lowest-order Raviart-Thomas finite element to
general polygons.
\end{rem}

A more important relation between $\mathcal{P}_1^{-}\Lambda^1(T)$
and $\chi\left(\mathcal{M}\Lambda^1(T)\right)$ is given in the
following lemma:

\begin{lem} \label{lem:RT0included2D}
The space $\chi\left(\mathcal{M}\Lambda^1(T)\right)$ reproduces all
functions in $\mathcal{P}_1^{-}\Lambda^1(T)$, i.e.,
$$
\mathcal{P}_1^{-}\Lambda^1(T) \subseteq
\chi\left(\mathcal{M}\Lambda^1(T)\right).
$$
\end{lem}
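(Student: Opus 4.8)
The plan is to reduce the claim to showing that every constant vector field lies in $\chi(\cM\Lambda^1(T))$. From (1.4), $\cP_1^-\Lambda^1(T)$ consists of the vector fields $a\vx+\vc$ with $a\in\bbR$ and $\vc\in\bbR^2$; writing $a\vx+\vc = a(\vx-\vx_*) + (\vc+a\vx_*)$ and letting $a$ and $\vc$ vary, this is exactly the set $\{a(\vx-\vx_*)+\vc : a\in\bbR,\ \vc\in\bbR^2\}$. By the very definition (3.2) of $\chi(\cM\Lambda^1(T))$, the summand $span\{\vx-\vx_*\}$ is already contained in $\chi(\cM\Lambda^1(T))$, so it suffices to prove $\bbR^2\subseteq \chi(\cM\Lambda^1(T))$, i.e., that the curl part $span\{\curl\lambda_k\}$ captures all constant vectors.

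For this I would use that the 2D curl of a scalar satisfies $\curl u = \chi\,\grad u$, so that for any coefficients $\{\alpha_k\}$ one has
$$
\sum_{k=1}^n \alpha_k\,\curl\lambda_k = \chi\,\grad\Bigl(\sum_{k=1}^n \alpha_k\lambda_k\Bigr).
$$
Taking $\alpha_k$ to be the first component of the vertex $\vv_k$ and invoking the linear precision property (\ref{eq:GBCLinearPrecision}), the inner sum becomes the coordinate function $x$, whence $\sum_k\alpha_k\,\curl\lambda_k = \curl x = [0,1]^t$; similarly, taking $\alpha_k$ to be the second component of $\vv_k$ yields $\curl y = [-1,0]^t$. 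These two constant vectors span $\bbR^2$, so every constant vector belongs to $span\{\curl\lambda_k\}\subseteq\chi(\cM\Lambda^1(T))$.

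Combining the two observations, and using that $\chi(\cM\Lambda^1(T))$ is a linear subspace, I conclude that $a(\vx-\vx_*)+\vc\in\chi(\cM\Lambda^1(T))$ for all $a\in\bbR$ and $\vc\in\bbR^2$, which is precisely the assertion $\cP_1^-\Lambda^1(T)\subseteq\chi(\cM\Lambda^1(T))$. There is no genuine obstacle here; the only points requiring a little care are matching the $H(\div)$ vector-proxy normalization of $\cP_1^-\Lambda^1(T)$ in (1.4) with the $\chi$-transformed de Rham picture, and noting that linear precision is the single structural property of the generalized barycentric coordinates needed — neither Assumption 1 nor Assumption 2 enters the argument.
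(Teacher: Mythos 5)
Your proof is correct and follows essentially the same route as the paper's: the paper likewise observes that linear precision (\ref{eq:GBCLinearPrecision}) gives $\cP_1^-\Lambda^0(T)\subseteq\cM\Lambda^0(T)$, so that $\curl\cM\Lambda^0(T)$ contains all constant vectors, which together with the $span\{\vx-\vx_*\}$ summand in (\ref{eq:HdivSpace2D}) yields the inclusion. You have merely written out the details that the paper leaves as "immediate."
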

\begin{proof}
This follows immediately from the definition of
$\chi\left(\mathcal{M}\Lambda^1(T)\right)$ and the fact that
$\cP_1^-\Lambda^0(T)\subseteq \mathcal{M}\Lambda^0(T)$, which comes
from Equation (\ref{eq:GBCLinearPrecision}).
\end{proof}

\begin{rem}
Lemma \ref{lem:RT0included2D} indicates that the space
$\chi\left(\mathcal{M}\Lambda^1(T)\right)$ is independent of the
choice of $\vx_*$.
\end{rem}

Finally, we briefly show that $\chi(\cM\Lambda^1(T))$ constructed in
this section is a subspace of $\chi(\mathcal{W}\Lambda^1(T))$. By
Equation (\ref{eq:GBCLinearPrecision}), it is not hard to see that
\begin{equation} \label{eq:Wijsummation}
\sum_{j=1}^n W_{ij} = -\nabla \lambda_i,\qquad\textrm{for all }1\le
i\le n,
\end{equation}
which implies that $\curl \cM\Lambda^0(T) = \chi (\nabla
\cM\Lambda^0(T) ) \subseteq \chi(\mathcal{W}\Lambda^1(T))$. Recall
the inclusion relation of finite elements in
(\ref{eq:WLambdaTincludeP-LambdaT}), one has
$\cP_1^-\Lambda^1(T)\subseteq \chi(\mathcal{W}\Lambda^1(T))$.
Combining the above with the definition of $\chi(\cM\Lambda^1(T)$
gives $\chi(\cM\Lambda^1(T)\subseteq \chi(\mathcal{W}\Lambda^1(T))$.

%%%%%%%%%%%%%%%%%%%%%%%%%%%%%%%%%%%%%%%%%%%%%%%%%%%%
\subsection{Interpolation operator and its properties} To make sure that the mixed finite element
theory works on the finite element
$\chi\left(\mathcal{M}\Lambda^1(T)\right)$, we define an
interpolation operator into
$\chi\left(\mathcal{M}\Lambda^1(T)\right)$ which satisfies certain
stability and approximation properties. For convenience, we
introduce the notation $\lesssim$, $\gtrsim$ and $\approx$ for `less
than or equal to', `greater than or equal to', `both less than or
equal to and greater than or equal to' up to a constant independent
of the shape of all polygons in a given mesh.

% ----------------------------------------------------------------
Clearly, to establish any kind of stability and approximation
properties, the polygonal mesh needs to satisfy certain shape
regularity conditions. We assume for all polygons in the mesh,
\begin{itemize}
\item The area of the polygon is related to its diameter as follows:
$$
|T|\approx h_T^2;
$$
\item The gradient of $\lambda_i$, for $1\le i\le n$, on $T$ satisfies
\begin{equation} \label{eq:gradlambdaupperbound}
|\nabla \lambda_i |\lesssim h_T^{-1}, \qquad\textrm{at all }\vx\in
T,
\end{equation}
where $|\cdot|$ stands for the Euclidean length. It has been proved
in \cite{Floater14} that (\ref{eq:gradlambdaupperbound}) holds for
Wachspress coordinates as long as $h^*$, the minimum distance from
any vertex of $T$ to a non-incidental edge, satisfies $h^*\approx
h_T$.
\item The following trace inequality and approximation property of $L^2$
projection hold on $T$:
\begin{equation} \label{eq:traceapproximation}
\begin{aligned}
\|\phi\|_{L^2(\partial T)}^2 &\lesssim h_T^{-1} \|\phi\|_{T}^2 + h_T
\|\nabla\phi \|_{T}^2 ,\qquad &&\textrm{for }\phi\in
H^1(T), \\
\|\phi - P_T \phi\|_{T} &\lesssim h_T \|\phi\|_{1,T}, \qquad
&&\textrm{for }\phi\in H^1(T),
\end{aligned}
\end{equation}
where $P_T$ denotes the $L^2$ orthogonal projection onto $\bbR$. It
is known that when $T$ satisfy certain shape regularity conditions,
(\ref{eq:traceapproximation}) holds on $T$. Readers may refer to
\cite{Brezzi05, Herbin96, MuWangWang15, Wang14} for further
discussion.
\end{itemize}
\smallskip

%------------------------------------------------------------------

Now let use define the interpolation operator. For any $\vq\in
H(\div,\, T)\cap (L^p(T))^2$ with $p>2$, define $\Pi_T\vq\in
\chi\left(\mathcal{M}\Lambda^1(T)\right)$ by
$$
(\Pi_T\vq)\cdot \vn_j|_{e_j} = \frac{1}{|e_j|} \int_{e_j}
\vq\cdot\vn_j \, ds,\qquad \textrm{for }1\le j\le n.
$$
The requirement $p>2$ is to guarantee that $\int_{e_j} \vq\cdot\vn_j
\, ds$ be well-defined. One may circumvent this requirement by using
Cl\'{e}ment type interpolations \cite{Clement75}. According to the
definition, it is clear that
$$
\Pi_T \vq = \sum_{i=1}^n a_i \vq_i, \qquad\textrm{where }a_i =
\frac{1}{|e_i|} \int_{e_i} \vq\cdot\vn_i \, ds.
$$
Moreover, by the unisolvancy of the degrees of freedom and Lemma
\ref{lem:RT0included2D}, we know that $\Pi_T$ preserves all
functions in $\mathcal{P}_1^{-}\Lambda^1(T)$, i.e.,
$$
\Pi_T \begin{pmatrix}cx+a\\ cx+b\end{pmatrix} = \begin{pmatrix}cx+a\\
cx+b\end{pmatrix},\qquad \textrm{for all }a,b,c\in \bbR.
$$

Denote by $I_T$ the nodal value interpolation into
$\mathcal{M}\Lambda^0(T)$. Properties of nodal value interpolation
for generalized barycentric coordinates have be discussed in
\cite{Floater14, Gillette12}. Then we have:
\begin{lem} \label{lem:commutativeDiagram2D}
Let $p>2$.  For any $\vq\in H(\div,\, T)\cap (L^p(T))^2$ , one has
$\div \Pi_T \vq = P_T \div \vq$. For any $\phi\in W^{1,p}(T)$, one
has $\Pi_T\curl \phi = \curl I_T \phi$. In other words, the
following diagram is commutative:
$$
\begin{matrix}
W^{1,p}(T) & \xrightarrow[\al]{\curl} & H(\div,\, T)\cap (L^p(T))^2
& \xrightarrow[\al]{\div} & L^2(T)
\\
I_T {\bigg\downarrow} & &  \Pi_T {\bigg\downarrow} & & P_T {\bigg \downarrow}  \\
\mathcal{M}\Lambda^0(T) & \xrightarrow[\al]{\curl} &
\chi\left(\mathcal{M}\Lambda^1(T)\right) & \xrightarrow[\al]{\div} &
\bbR
\end{matrix}
$$
\end{lem}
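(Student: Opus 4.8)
The plan is to verify the two commutation relations $\div\,\Pi_T\vq = P_T\div\vq$ and $\Pi_T\curl\phi = \curl I_T\phi$ separately, since the commutativity of the diagram is just these two identities together with the already-established exactness and inclusion facts.

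For the first identity, I would argue as follows. Both $\div\,\Pi_T\vq$ and $P_T\div\vq$ lie in $\bbR$ (the former because $\div\,\chi(\mathcal{M}\Lambda^1(T)) = \bbR$ by construction, the latter by definition of $P_T$), so it suffices to show they have the same integral over $T$. On one hand, $\int_T P_T\div\vq\,d\vx = \int_T \div\vq\,d\vx$. On the other hand, since $\div\,\Pi_T\vq$ is constant, $\int_T \div\,\Pi_T\vq\,d\vx = \int_{\partial T}(\Pi_T\vq)\cdot\vn\,ds = \sum_{j=1}^n \int_{e_j}(\Pi_T\vq)\cdot\vn_j\,ds = \sum_{j=1}^n |e_j|\,\big((\Pi_T\vq)\cdot\vn_j|_{e_j}\big) = \sum_{j=1}^n \int_{e_j}\vq\cdot\vn_j\,ds = \int_{\partial T}\vq\cdot\vn\,ds = \int_T\div\vq\,d\vx$, where the second-to-last step uses the defining property of $\Pi_T$ and the last step is the divergence theorem (valid since $\vq\in H(\div,T)$). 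Dividing by $|T|$ gives the result.

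For the second identity, I would use unisolvency of the degrees of freedom on $\chi(\mathcal{M}\Lambda^1(T))$: since both $\Pi_T\curl\phi$ and $\curl I_T\phi$ belong to $\chi(\mathcal{M}\Lambda^1(T))$ — the former by definition of $\Pi_T$, the latter because $I_T\phi\in\mathcal{M}\Lambda^0(T)$ and $\curl\mathcal{M}\Lambda^0(T)\subseteq\chi(\mathcal{M}\Lambda^1(T))$ — it is enough to check that their normal components on each edge $e_j$ agree. For $\Pi_T\curl\phi$, by definition $(\Pi_T\curl\phi)\cdot\vn_j|_{e_j} = \frac{1}{|e_j|}\int_{e_j}(\curl\phi)\cdot\vn_j\,ds = -\frac{1}{|e_j|}\int_{e_j}\frac{\partial\phi}{\partial\vt_j}\,ds = -\frac{1}{|e_j|}\big(\phi(\vv_{j+1})-\phi(\vv_j)\big)$, using $(\curl\phi)\cdot\vn_j = -\partial\phi/\partial\vt_j$ on $e_j$ and the fundamental theorem of calculus along the edge. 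For $\curl I_T\phi$, write $I_T\phi = \sum_k \phi(\vv_k)\lambda_k$; then $(\curl I_T\phi)\cdot\vn_j|_{e_j} = -\frac{\partial (I_T\phi)}{\partial\vt_j}\big|_{e_j} = -\sum_k \phi(\vv_k)\frac{\partial\lambda_k}{\partial\vt_j}\big|_{e_j}$. By the trace property in Assumption 1, each $\lambda_k$ is piecewise linear on $\partial T$ with $\lambda_k(\vv_m)=\delta_{km}$, so on $e_j$ we have $\frac{\partial\lambda_k}{\partial\vt_j}|_{e_j} = \frac{1}{|e_j|}\big(\lambda_k(\vv_{j+1})-\lambda_k(\vv_j)\big) = \frac{1}{|e_j|}(\delta_{k,j+1}-\delta_{k,j})$. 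Substituting gives $-\frac{1}{|e_j|}\big(\phi(\vv_{j+1})-\phi(\vv_j)\big)$, matching the expression for $\Pi_T\curl\phi$. Hence the two functions agree on all edge degrees of freedom, and by unisolvency they are equal.

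The main obstacle, if any, is a careful treatment of regularity so that all the boundary integrals make sense: the requirement $\vq\in(L^p(T))^2$ with $p>2$ guarantees $\int_{e_j}\vq\cdot\vn_j\,ds$ is well-defined (as already noted in the text), while $\phi\in W^{1,p}(T)$ with $p>2$ ensures $\phi$ has a continuous trace on $\partial T$ so that $\phi(\vv_j)$ and $\int_{e_j}\partial\phi/\partial\vt_j\,ds$ are meaningful; the fundamental-theorem-of-calculus step along each edge should be justified by the absolute continuity of $\phi$ restricted to $e_j$ (a standard consequence of the Sobolev embedding in one dimension after restriction). Everything else is a direct computation using the definitions of $\Pi_T$, $I_T$, $P_T$ and the Lagrange and trace properties from Assumption 1.
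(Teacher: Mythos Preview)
Your proof is correct and follows essentially the same approach as the paper: verifying the second identity by matching edge normal components and invoking unisolvency, exactly as the paper does. For the first identity the paper computes $\div\Pi_T\vq$ directly from the basis expansion $\Pi_T\vq=\sum_i a_i\vq_i$ using $\div\vq_i=|e_i|/|T|$, whereas you argue via the divergence theorem that the two constants have equal integral over $T$; this is a cosmetic difference, and both routes lead to the same one-line computation.
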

\begin{proof}
Given $\vq\in H(\div,\, T)\cap (L^p(T))^2$, let $a_i =
\frac{1}{|e_i|} \int_{e_i} \vq\cdot\vn_i \, ds$ for $1\le i\le n$.
Then by the definition of basis function $\vq_i$, one has
$$
\begin{aligned}
\div\Pi_T \vq &= \div \sum_{i=1}^n a_i \vq_i
 =  \sum_{i=1}^n a_i \frac{|e_i|}{|T|}
 =  \sum_{i=1}^n \frac{1}{|T|} \int_{e_i} \vq\cdot\vn_i \, ds \\
 %= \frac{1}{|T|} \int_{\partial T} \vq\cdot\vn\, ds
 &= \frac{1}{|T|} \int_T \div \vq\, dx
 = P_T \div \vq.
\end{aligned}
$$
Given $\phi\in W^{1,p}(T)$. Then $I_T\phi = \sum_{i=1}^n \phi(\vv_i)
\lambda_i$. Note that for all $1\le j\le n$, one has
$$
\begin{aligned}
(\Pi_T \curl\phi)\cdot\vn_j|_{e_j} &= \frac{1}{|e_j|} \int_{e_j}
\curl\phi\cdot\vn_j  \, ds \\
&= -\frac{1}{|e_j|}\int_{e_j} \frac{\partial\phi}{\partial\vt_j}  \,
ds =\frac{\phi(\vv_j) - \phi(\vv_{j+1})}{|e_j|},
\end{aligned}
$$
and
$$
\begin{aligned}
(\curl I_T \phi) \cdot \vn_j|_{e_j} &= \left( \curl \sum_{i=1}^n
\phi(\vv_i) \lambda_i \right) \cdot \vn_j|_{e_j} = -\sum_{i=1}^n
\phi(\vv_i) \left.
\frac{\partial\lambda_i}{\partial\vt_j}\right|_{e_j} \\ &=
-\phi(\vv_j) \frac{\partial\lambda_j}{\partial\vt_j} -
\phi(\vv_{j+1}) \frac{\partial\lambda_{j+1}}{\partial\vt_j} =
\frac{\phi(\vv_j) - \phi(\vv_{j+1})}{|e_j|}.
\end{aligned}
$$
By the unisolvancy of the degrees of freedom, we have $\Pi_T\curl
\phi = \curl I_T \phi$. This completes the proof of the lemma.
\end{proof}

To prove the stability and approximation properties of $\Pi_T$, we
first derive the following estimate of $\vq_i$:
\begin{lem} \label{lem:qibound}
For $1\le i\le n$, one has
$$
\|\vq_i\|_T \lesssim C(n) |e_i|,
$$
where $C(n)$ is a general positive constant depending only on $n$.
\end{lem}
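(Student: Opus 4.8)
The plan is to bound $\|\vq_i\|_T$ by applying the triangle inequality directly to the explicit expression (\ref{eq:2Dbasis}) for $\vq_i$, estimating the contribution of the term $c_{i,0}(\vx-\vx_*)$ and of each term $c_{i,k}\curl\lambda_k$ separately. This reduces the lemma to three ingredients: a bound on $\|\vx-\vx_*\|_T$, a bound on $\|\curl\lambda_k\|_T$, and a bound on the coefficients $c_{i,k}$. The first two are immediate consequences of the stated shape regularity, namely $|T|\approx h_T^2$ and (\ref{eq:gradlambdaupperbound}); the third is a purely algebraic estimate on the combinatorial coefficients $b_{i,l}$ and $c_{i,k}$.

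First I would treat the $\curl\lambda_k$ terms. Since $\curl\lambda_k$ is $\nabla\lambda_k$ rotated by $\pi/2$, it has the same pointwise Euclidean length, so $\|\curl\lambda_k\|_T = \|\nabla\lambda_k\|_T \le \big(\int_T |\nabla\lambda_k|^2\,d\vx\big)^{1/2}\lesssim h_T^{-1}|T|^{1/2}\lesssim 1$, where the last two steps use (\ref{eq:gradlambdaupperbound}) and $|T|\approx h_T^2$. For the coefficient $c_{i,k} = -\frac1n\sum_{l=1}^{n-1}l\,b_{i,k+l}$, I would bound $|c_{i,k}| \le \frac{n-1}{n}\sum_{l=1}^{n-1}|b_{i,k+l}| \le \sum_{m=1}^n |b_{i,m}|$, using that $k+l$ ranges (mod $n$) over all indices $\ne k$ as $l$ runs from $1$ to $n-1$. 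Then, from $b_{i,m} = \delta_{im}|e_m| - |e_i|\frac{|T_m|}{|T|}$, the identity $\sum_{m=1}^n|T_m| = |T|$, and non-negativity of all $|T_m|$, one gets
\begin{equation*}
\sum_{m=1}^n |b_{i,m}| \le \sum_{m=1}^n \delta_{im}|e_m| + |e_i|\sum_{m=1}^n \frac{|T_m|}{|T|} = 2|e_i|,
\end{equation*}
hence $|c_{i,k}|\le 2|e_i|$ for every $1\le k\le n$, and therefore $\sum_{k=1}^n |c_{i,k}|\,\|\curl\lambda_k\|_T \lesssim n|e_i|$.

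Next I would handle the term $c_{i,0}(\vx-\vx_*)$. Since $\vx,\vx_*\in T$ and $T$ has diameter $h_T$, we have $|\vx-\vx_*|\le h_T$ pointwise, so $\|\vx-\vx_*\|_T \le h_T|T|^{1/2}\lesssim h_T^2$; combined with $c_{i,0}=\frac{|e_i|}{2|T|}\approx |e_i|h_T^{-2}$, this gives $|c_{i,0}|\,\|\vx-\vx_*\|_T\lesssim |e_i|$. Adding the two parts yields $\|\vq_i\|_T \lesssim (n+1)|e_i|$, which is the asserted estimate with $C(n)$ of order $n$.

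I expect the only step requiring genuine care to be the coefficient bound: one must verify that the $l$-weighted sum defining $c_{i,k}$ does not accumulate faster than linearly in $n$ and is controlled by $|e_i|$ alone, which follows from the crude estimate $|c_{i,k}|\le\sum_{m=1}^n|b_{i,m}|$ together with $\sum_{m=1}^n|T_m|=|T|$. Everything else is a routine application of the shape regularity hypotheses $|T|\approx h_T^2$ and (\ref{eq:gradlambdaupperbound}).
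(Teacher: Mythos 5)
Your proof is correct and follows essentially the same route as the paper: split $\vq_i$ into its $(n+1)$ terms, control $\|\vx-\vx_*\|_T$ and $\|\nabla\lambda_k\|_T$ via $|T|\approx h_T^2$ and (\ref{eq:gradlambdaupperbound}), and bound the coefficients $c_{i,k}$ combinatorially through the $b_{i,l}$. Your coefficient bound $|c_{i,k}|\le\sum_m|b_{i,m}|\le 2|e_i|$ is in fact slightly sharper than the paper's $\frac{n-1}{2}\max_l|b_{i,l}|$, but this only improves the unspecified constant $C(n)$ and does not change the argument.
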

\begin{proof}
Note that
$$
\begin{aligned}
\|\vq_i\|_T^2 &= \| c_{i,0} (\vx-\vx_*) + \sum_{k=1}^n
c_{i,k}\curl\lambda_k\|_T^2 \\
&\le (n+1) \left( c_{i,0}^2 \|\vx-\vx_*\|_T^2 +  \sum_{k=1}^n
c_{i,k}^2
\|\nabla \lambda_k\|_T^2 \right)\\
&\triangleq (n+1) (J_0 + \sum_{k=1}^n J_k).
\end{aligned}
$$
For $J_0$, we have
$$
J_0 = \frac{|e_i|^2}{4|T|^2} \|\vx-\vx_*\|_T^2
\le \frac{|e_i|^2}{4|T|^2} |T| h_T^2 \\
\lesssim |e_i|^2.
$$
Here in the last step we used the assumption $|T|\approx h_T^2$.
Next, by (\ref{eq:gradlambdaupperbound}), we have the following
estimate for $J_k$:
$$
\begin{aligned}
J_k &= c_{i,k}^2 \|\nabla \lambda_k\|_T^2 \lesssim c_{i,k}^2
\frac{|T|}{h_T^2} \lesssim c_{i,k}^2
= \left( -\frac{1}{n} \sum_{l=1}^{n-1} l \, b_{i,k+l} \right)^2 \\
&\le \left( \frac{n-1}{2} \max_{1\le l\le n} |b_{i,l}|\right)^2
\lesssim n^2 |e_i|^2.
\end{aligned}
$$
Combining the above, we have proved the lemma.
\end{proof}

Denote by $Q_T$ the $(L^2(T))^2$ projection onto $\bbR^2$. Clearly
we have $\Pi_T Q_T \vq = Q_T \vq$. Next we prove the following
technical lemma:
\begin{lem} \label{lem:tvqbound}
For $\vq\in (H^1(T))^2$, one has
$$
\|\Pi_T (\vq-Q_T\vq)\|_T \lesssim C(n) h_T \|\vq\|_{1,T},
$$
where $C(n)$ is a general positive constant depending only on $n$.
\end{lem}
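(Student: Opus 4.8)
The plan is to bound $\|\Pi_T(\vq - Q_T\vq)\|_T$ by expanding $\Pi_T(\vq - Q_T\vq)$ in the basis $\{\vq_i\}$ and using Lemma~\ref{lem:qibound}. Writing $\vr = \vq - Q_T\vq$, we have $\Pi_T\vr = \sum_{i=1}^n a_i \vq_i$ with $a_i = \frac{1}{|e_i|}\int_{e_i} \vr\cdot\vn_i\,ds$. By the triangle inequality and Lemma~\ref{lem:qibound},
$$
\|\Pi_T\vr\|_T \le \sum_{i=1}^n |a_i|\,\|\vq_i\|_T \lesssim C(n)\sum_{i=1}^n |a_i|\,|e_i| = C(n)\sum_{i=1}^n \left| \int_{e_i} \vr\cdot\vn_i\,ds\right|.
$$
So the task reduces to estimating each edge integral $\int_{e_i}\vr\cdot\vn_i\,ds$, where $\vr = \vq - Q_T\vq$ has vanishing mean on $T$.

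First I would bound $\left|\int_{e_i}\vr\cdot\vn_i\,ds\right| \le |e_i|^{1/2}\,\|\vr\cdot\vn_i\|_{e_i} \le |e_i|^{1/2}\,\|\vr\|_{L^2(e_i)} \le |e_i|^{1/2}\,\|\vr\|_{L^2(\partial T)}$ by Cauchy--Schwarz. Since $|e_i| \le h_T$ (the diameter), this gives $\left|\int_{e_i}\vr\cdot\vn_i\,ds\right| \lesssim h_T^{1/2}\|\vr\|_{L^2(\partial T)}$, so that $\|\Pi_T\vr\|_T \lesssim C(n)\, n\, h_T^{1/2}\|\vr\|_{L^2(\partial T)}$, absorbing the extra factor of $n$ into $C(n)$. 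Then I would apply the trace inequality from~(\ref{eq:traceapproximation}), applied componentwise to $\vr$, to get
$$
\|\vr\|_{L^2(\partial T)}^2 \lesssim h_T^{-1}\|\vr\|_T^2 + h_T\|\nabla\vr\|_T^2.
$$
Since $Q_T\vq$ is a constant vector, $\nabla\vr = \nabla\vq$, and by the $L^2$-projection approximation property in~(\ref{eq:traceapproximation}) (again componentwise) $\|\vr\|_T = \|\vq - Q_T\vq\|_T \lesssim h_T\|\vq\|_{1,T}$. Substituting, $\|\vr\|_{L^2(\partial T)}^2 \lesssim h_T^{-1}h_T^2\|\vq\|_{1,T}^2 + h_T\|\vq\|_{1,T}^2 \lesssim h_T\|\vq\|_{1,T}^2$, hence $\|\vr\|_{L^2(\partial T)} \lesssim h_T^{1/2}\|\vq\|_{1,T}$.

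Combining the two displays yields $\|\Pi_T(\vq - Q_T\vq)\|_T \lesssim C(n)\,h_T^{1/2}\cdot h_T^{1/2}\|\vq\|_{1,T} = C(n)\,h_T\|\vq\|_{1,T}$, which is the claim. The main obstacle, such as it is, lies in marshalling the shape-regularity hypotheses cleanly: one needs $|e_i| \le h_T$ (immediate from the definition of diameter), and one must be careful that the trace inequality and the $L^2$-projection estimate in~(\ref{eq:traceapproximation}) are stated for scalar $H^1(T)$ functions, so they should be invoked on each component of $\vr$ separately and then summed --- this only costs a harmless constant. No genuinely hard estimate is required; the result is essentially a repackaging of Lemma~\ref{lem:qibound} together with the assumed trace and approximation inequalities.
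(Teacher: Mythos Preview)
Your proof is correct and follows essentially the same route as the paper: expand $\Pi_T(\vq-Q_T\vq)$ in the basis $\{\vq_i\}$, invoke Lemma~\ref{lem:qibound} to bound $\|\vq_i\|_T$, apply Cauchy--Schwarz on each edge, and finish with the trace inequality and the $L^2$-projection approximation property from~(\ref{eq:traceapproximation}). The only cosmetic differences are that the paper works with squares (using Cauchy--Schwarz on the sum rather than the triangle inequality) and applies the trace bound edge-by-edge rather than passing through $\|\vr\|_{L^2(\partial T)}$; neither change affects the argument.
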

\begin{proof}
For convenience, denote $\tvq = \vq-Q_T\vq$. Then by the Schwarz
inequality and Lemma \ref{lem:qibound},
$$
\begin{aligned}
\|\Pi_T \tvq\|_T^2 &= \| \sum_{i=1}^n
\left(\frac{1}{|e_i|}\int_{e_i} \tvq\cdot\vn_i\, ds \right) \vq_i
\|_T^2 \le n \sum_{i=1}^n \left(\frac{1}{|e_i|}\int_{e_i}
\tvq\cdot\vn_i\, ds \right)^2 \|\vq_i\|_T^2 \\
&\le  n\sum_{i=1}^n \frac{\|\tvq\|_{e_i}^2 \|\vq_i\|_T^2}{|e_i|}
\lesssim C(n) \sum_{i=1}^n \left( |e_i|\|\tvq\|_{e_i}^2 \right).
\end{aligned}
$$
Then, by (\ref{eq:traceapproximation}), one has
$$
\|\tvq\|_{e_i}^2 \lesssim h_T^{-1} \|\tvq\|_T^2 + h_T
\|\nabla\tvq\|_T^2 \lesssim h_T \|\vq\|_{1,T}^2.
$$
Combining the above gives
$$
\|\Pi_T \tvq\|_T^2 \lesssim C(n) \left( \sum_{i=1}^n |e_i|\right)
h_T \|\vq\|_{1,T}^2 \lesssim C(n) h_T^2 \|\vq\|_{1,T}^2.
$$
This completes the proof of the lemma.
\end{proof}

%%%%%%%%%%%%%%%%%%%%%%%%%%%%%%%%%%%%%%%%%%
Next we prove the following stability property of $\Pi_T$:
\begin{lem}
For $\vq\in (H^1(T))^2$, one has
$$
\|\Pi_T\vq\|_{H(\div,\, T)} \lesssim C(n) \|\vq\|_{1,T}.
$$
\end{lem}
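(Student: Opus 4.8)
The plan is to control the $H(\div,T)$ norm of $\Pi_T\vq$ by splitting it into the $L^2$ norm of $\Pi_T\vq$ and the $L^2$ norm of $\div\,\Pi_T\vq$, and to treat each piece separately using the tools already assembled. For the $L^2$ part I would write $\Pi_T\vq = \Pi_T Q_T\vq + \Pi_T(\vq - Q_T\vq) = Q_T\vq + \Pi_T(\vq - Q_T\vq)$, using the identity $\Pi_T Q_T\vq = Q_T\vq$ noted just before Lemma~\ref{lem:tvqbound}. The first term satisfies $\|Q_T\vq\|_T \le \|\vq\|_T \le \|\vq\|_{1,T}$ since $Q_T$ is an orthogonal projection, and the second term is exactly what Lemma~\ref{lem:tvqbound} bounds, giving $\|\Pi_T(\vq-Q_T\vq)\|_T \lesssim C(n)\, h_T\|\vq\|_{1,T} \lesssim C(n)\|\vq\|_{1,T}$ (absorbing the $h_T$ factor into the constant, as $h_T$ is bounded on a fixed mesh, or simply keeping it as a harmless extra factor). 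Combining, $\|\Pi_T\vq\|_T \lesssim C(n)\|\vq\|_{1,T}$.

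For the divergence part I would invoke the commuting diagram of Lemma~\ref{lem:commutativeDiagram2D}, which gives $\div\,\Pi_T\vq = P_T\,\div\vq$. Since $P_T$ is the $L^2$ projection onto $\bbR$, it is a contraction in $L^2(T)$, so $\|\div\,\Pi_T\vq\|_T = \|P_T\,\div\vq\|_T \le \|\div\vq\|_T \le \|\vq\|_{1,T}$, where the last inequality holds because $\vq\in(H^1(T))^2$ implies $\div\vq\in L^2(T)$ with $\|\div\vq\|_T \le \sqrt{2}\,\|\vq\|_{1,T}$. Adding the two estimates yields
$$
\|\Pi_T\vq\|_{H(\div,T)}^2 = \|\Pi_T\vq\|_T^2 + \|\div\,\Pi_T\vq\|_T^2 \lesssim C(n)\,\|\vq\|_{1,T}^2,
$$
which is the claimed bound.

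I do not expect a serious obstacle here: every ingredient is already in place, and the argument is essentially bookkeeping. The one point that deserves a moment's care is the role of the mesh-dependent factor $h_T$: the constant in $\lesssim$ is by convention independent of the shape of the polygons, and $h_T$ itself is bounded above by the mesh diameter, so the $h_T$ appearing from Lemma~\ref{lem:tvqbound} can be cleanly absorbed. (If one wishes to be scrupulous, one can instead state the intermediate estimate as $\|\Pi_T\vq\|_T \lesssim C(n)(\|\vq\|_T + h_T\|\nabla\vq\|_T)$ and then bound by $\|\vq\|_{1,T}$ up to the mesh diameter.) The other mild subtlety is ensuring the hypothesis $\vq\in(H^1(T))^2$ is enough to apply $\Pi_T$ and the commuting diagram, which it is, since $H^1(T)\hookrightarrow L^p(T)$ for some $p>2$ in two dimensions by Sobolev embedding, so the edge integrals defining $\Pi_T$ are well-defined and Lemma~\ref{lem:commutativeDiagram2D} applies.
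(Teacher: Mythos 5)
Your proof is correct and follows essentially the same route as the paper: reduce the divergence part to the commuting identity $\div\,\Pi_T\vq = P_T\,\div\vq$ of Lemma \ref{lem:commutativeDiagram2D}, and handle the $L^2$ part via the splitting $\Pi_T\vq = Q_T\vq + \Pi_T(\vq - Q_T\vq)$ together with Lemma \ref{lem:tvqbound} and the stability of $Q_T$. Your extra remarks on absorbing the $h_T$ factor and on the Sobolev embedding $H^1(T)\hookrightarrow L^p(T)$ for $p>2$ are sound and merely make explicit what the paper leaves implicit.
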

\begin{proof}
%Notice that
%$$
%\begin{aligned}
%\|\Pi_T\vq\|_{H(\div,\, T)}^2 &= \|\Pi_T\vq\|_T^2 + \|\div
%\Pi_T\vq\|_T^2 = \|\Pi_T\vq\|_T^2 + \| P_T \div \vq\|_T^2\\
%&\le \|\Pi_T\vq\|_T^2 + \| \div \vq\|_T^2 \lesssim \|\Pi_T\vq\|_T^2
%+ \|\vq\|_{1,T}^2.
%\end{aligned}
%$$
By Lemma \ref{lem:commutativeDiagram2D}, it is clear that we only
need to prove $\|\Pi_T\vq\|_T \lesssim C(n)\|\vq\|_{1,T}$. Using the
triangle inequality, Lemma \ref{lem:tvqbound}, and the stability of
the $L^2$ projection $Q_T$, one has
$$
\begin{aligned}
\|\Pi_T\vq\|_T &\le \|\Pi_T (\vq - Q_T\vq) \|_T + \|\Pi_T Q_T \vq\|_T \\
&\lesssim C(n)h_T \|\vq\|_{1,T} + \|Q_T \vq\|_T \\
&\lesssim C(n) \|\vq\|_{1,T}.
\end{aligned}
$$
In the above we have used the fact that $\Pi_T Q_T \vq=Q_T \vq$.
This completes the proof of the lemma.
\end{proof}

Finnally, we prove the approximation property of $\Pi_T$:
\begin{lem}
For all $\vq\in (H^1(T))^2$, one has
$$
\|\vq-\Pi_T \vq\|_T \lesssim C(n) h_T\|\vq\|_{1,T}.
$$
Moreover, if $\div\vq\in H^1(T)$, then one has
$$
\|\div(\vq-\Pi_T\vq)\|_T \lesssim h_T \|\div\vq\|_{1,T}.
$$
\end{lem}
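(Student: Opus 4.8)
The plan is to prove the two estimates separately, each by a standard ``add and subtract a good approximant'' argument combined with the results already established.

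For the first estimate, I would start from the identity $\Pi_T Q_T\vq = Q_T\vq$, which holds because $Q_T\vq\in\bbR^2\subseteq \cP_1^-\Lambda^1(T)\subseteq \chi(\cM\Lambda^1(T))$ and $\Pi_T$ reproduces functions in $\cP_1^-\Lambda^1(T)$. Writing
$$
\vq - \Pi_T\vq = (\vq - Q_T\vq) - \Pi_T(\vq - Q_T\vq),
$$
the triangle inequality gives $\|\vq-\Pi_T\vq\|_T \le \|\vq - Q_T\vq\|_T + \|\Pi_T(\vq-Q_T\vq)\|_T$. The first term is bounded by $h_T\|\vq\|_{1,T}$ (up to a constant) by the approximation property of the $L^2$ projection in~(\ref{eq:traceapproximation}), applied componentwise; the second term is bounded by $C(n) h_T\|\vq\|_{1,T}$ directly by Lemma~\ref{lem:tvqbound}. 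Adding these yields the claimed bound.

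For the second estimate, I would use the commuting diagram of Lemma~\ref{lem:commutativeDiagram2D}: since $\div\Pi_T\vq = P_T\div\vq$, we have
$$
\div(\vq-\Pi_T\vq) = \div\vq - P_T\div\vq.
$$
Now $P_T$ is the $L^2$ projection onto $\bbR$, so by the second inequality in~(\ref{eq:traceapproximation}) applied to the scalar function $\phi = \div\vq\in H^1(T)$, one gets $\|\div\vq - P_T\div\vq\|_T \lesssim h_T\|\div\vq\|_{1,T}$, which is exactly the desired bound (and note this one has no dependence on $n$, since it only uses the projection estimate, not the basis functions).

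The main obstacle here is minor: it is really just assembling the already-proven pieces in the right order, the only subtlety being to make sure the reproduction property $\Pi_T Q_T\vq = Q_T\vq$ is invoked correctly (it rests on Lemma~\ref{lem:RT0included2D} and unisolvence) so that the ``add and subtract'' decomposition is legitimate. No new calculation of basis functions or geometry is needed; all the heavy lifting was done in Lemmas~\ref{lem:qibound}, \ref{lem:tvqbound}, and~\ref{lem:commutativeDiagram2D}, together with the shape-regularity hypotheses~(\ref{eq:gradlambdaupperbound})--(\ref{eq:traceapproximation}).
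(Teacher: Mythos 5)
Your proposal is correct and follows essentially the same route as the paper: the first bound via the decomposition $\vq-\Pi_T\vq=(\vq-Q_T\vq)-\Pi_T(\vq-Q_T\vq)$, the reproduction property $\Pi_T Q_T\vq=Q_T\vq$, Lemma~\ref{lem:tvqbound}, and the $L^2$-projection approximation estimate; the second bound via the commuting identity $\div\Pi_T\vq=P_T\div\vq$ from Lemma~\ref{lem:commutativeDiagram2D} together with~(\ref{eq:traceapproximation}). No gaps.
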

\begin{proof}
By the triangle inequality,  the fact that $\Pi_T Q_T \vq=Q_T \vq$,
Lemma \ref{lem:tvqbound}, and the approximation property of $Q_T$
(similar to (\ref{eq:traceapproximation})), one has
$$
\begin{aligned}
\|\vq - \Pi_T \vq\|_T &\lesssim \|\vq - Q_T\vq\|_T + \|\Pi_T
(\vq-Q_T\vq)\|_T \\
&\lesssim C(n) h_T \|\vq\|_{1,T}.
\end{aligned}
$$
The second part of the lemma follows from Lemma
\ref{lem:commutativeDiagram2D} and Inequality
(\ref{eq:traceapproximation}).
\end{proof}

\begin{rem}
Because of the above properties of $\Pi_T$, the finite element
$\chi\left(\mathcal{M}\Lambda^1(T)\right)$ fits the theoretical
framework of mixed finite element methods in the book by Brezzi and
Fortin \cite{BrezziFortin91}, as long as the polygonal mesh
satisfies all shape regularity assumptions and the number of
vertices in each polygon is bounded above. In this case, the mixed
finite element achieves optimal approximation error in both
$\|\cdot\|_{L^2(\Omega)}$ and $\|\div(\cdot)\|_{L^2(\Omega)}$, where
$\Omega$ denotes the entire computational domain.
\end{rem}

%%%%%%%%%%%%%%%%%%%%%%%%%%%%%%%%%%%%%%%%%%%%%%%%%%%%%%%%%%%%%
\subsection{Numerical results}
In this section, we first draw a set of basis $\{\vq_i\}$ for
$H(\div)$ element on a random pentagon in Figure
\ref{fig:pentagonbasis}, in order to give the reader a direct
picture of these basis functions. The basis is generated using the
formula (\ref{eq:2Dbasis}), with $\lambda_i$ set as the Wachspress
coordinates.

\begin{figure}[h]
\begin{center}
\includegraphics[width=3cm]{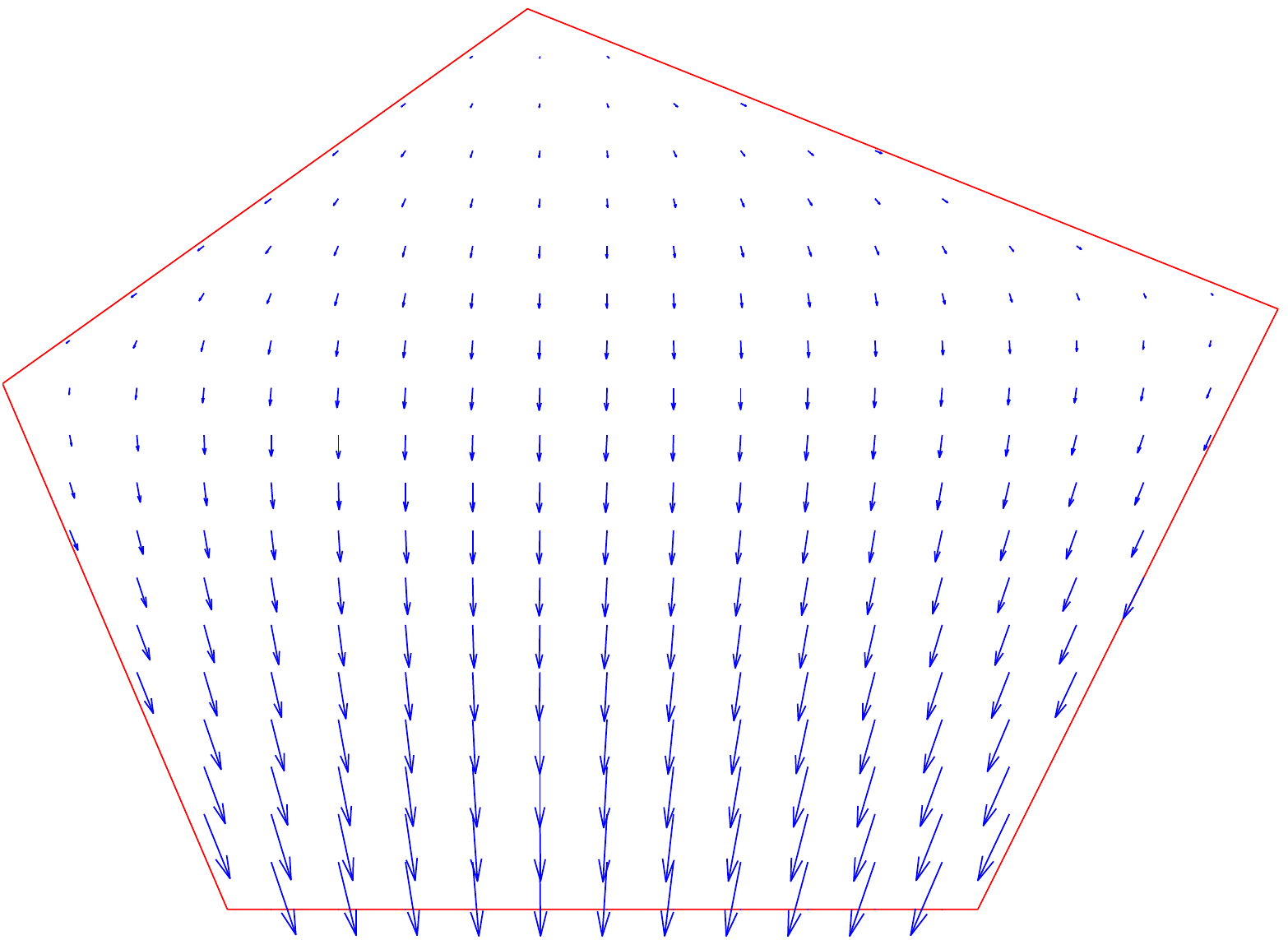}
\includegraphics[width=3cm]{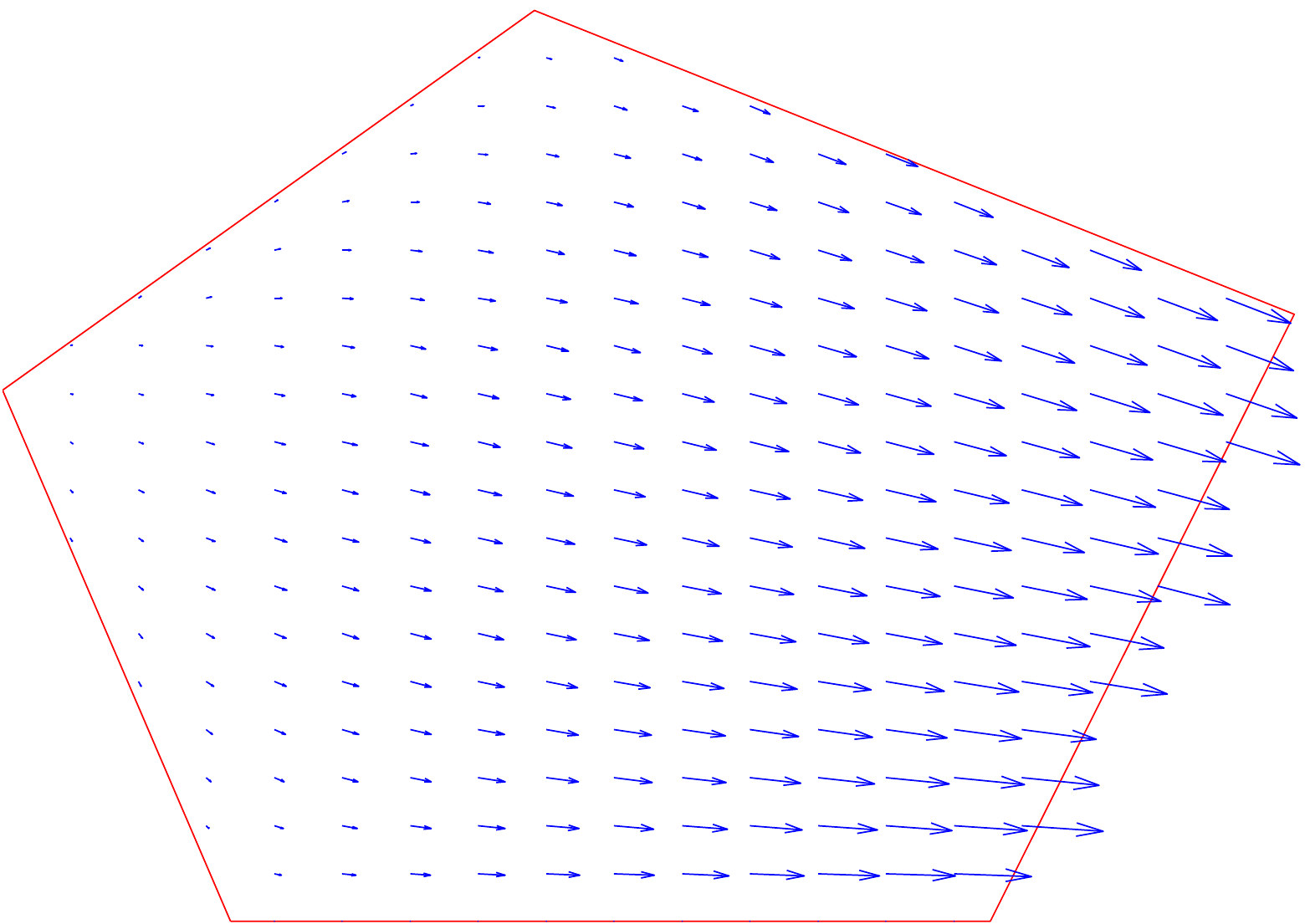}
\includegraphics[width=3cm]{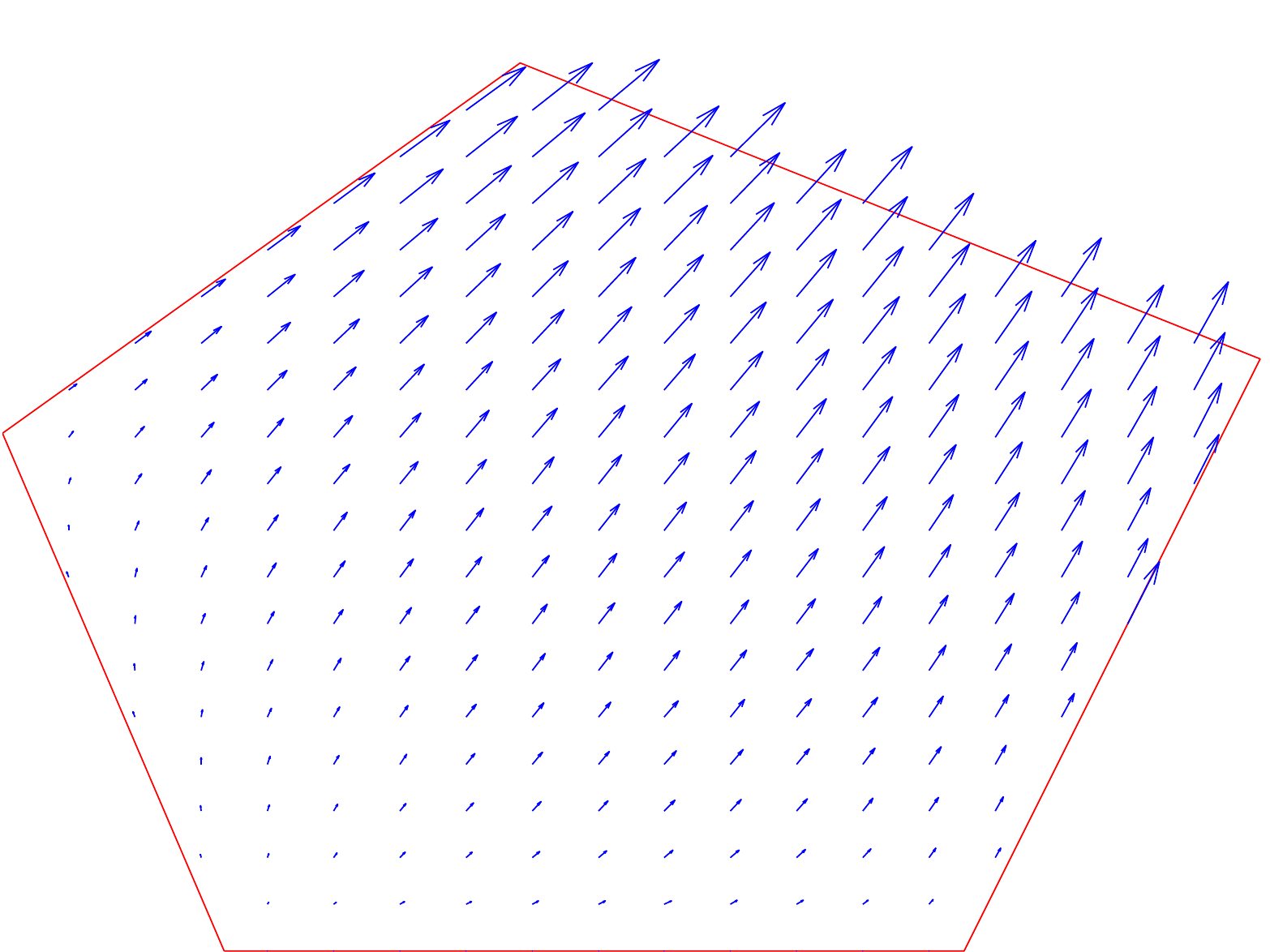}\\
\includegraphics[width=3cm]{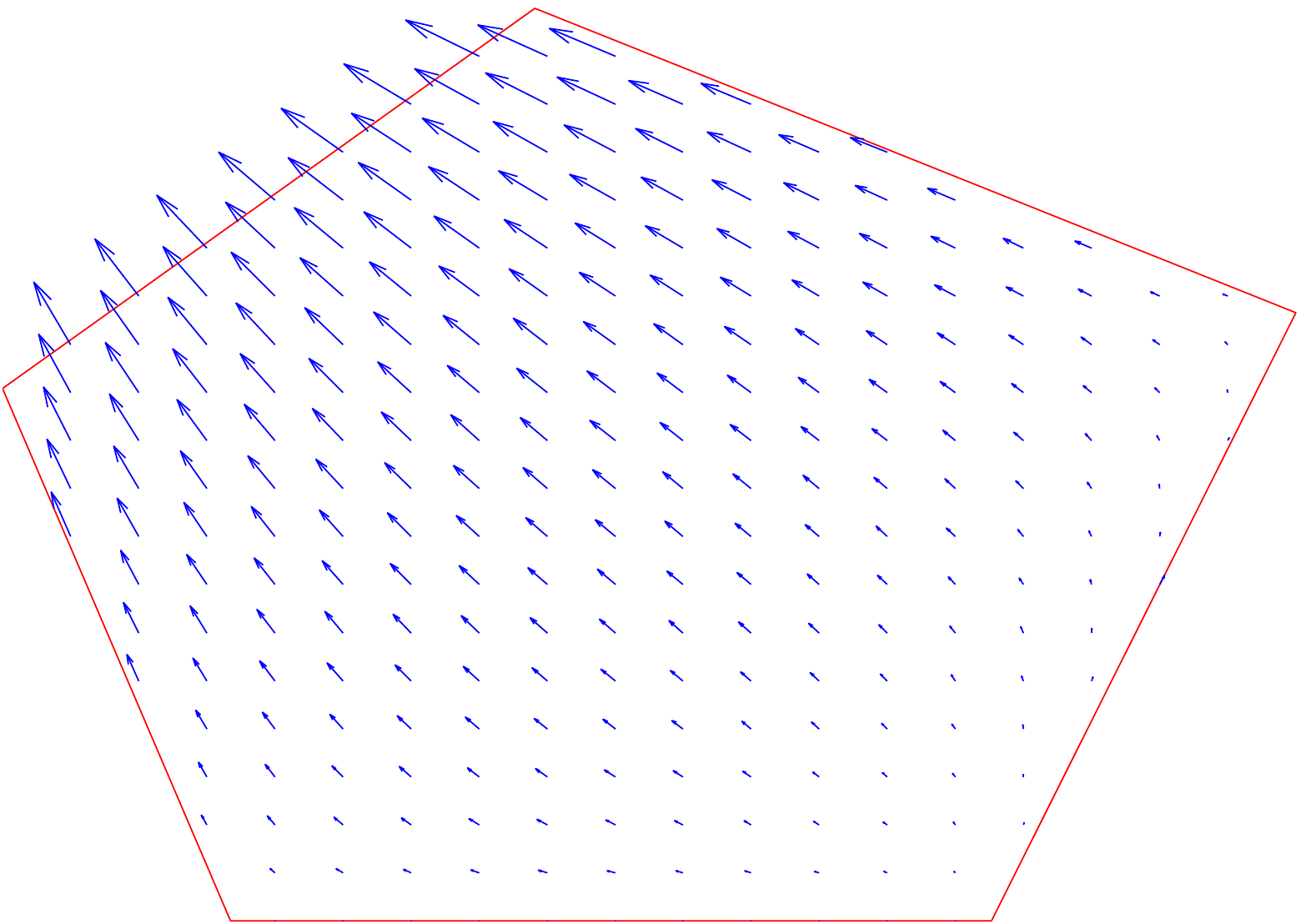}
\includegraphics[width=3cm]{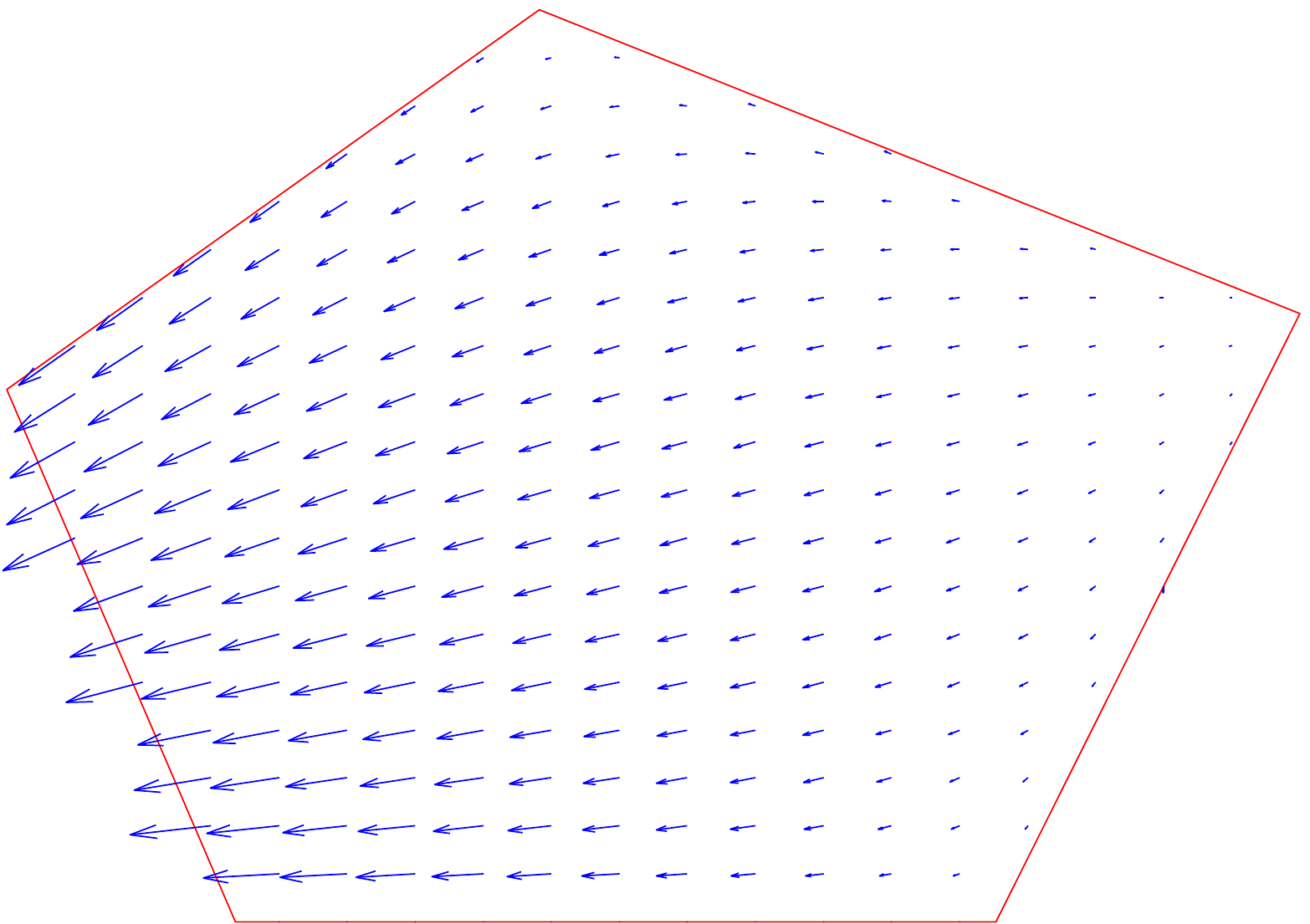}
\caption{Basis $\{\vq_i\}$ for $H(\div)$ element on a random
pentagon.} \label{fig:pentagonbasis}
\end{center}
\end{figure}

\begin{figure}[h]
\begin{center}
\includegraphics[width=3.5cm]{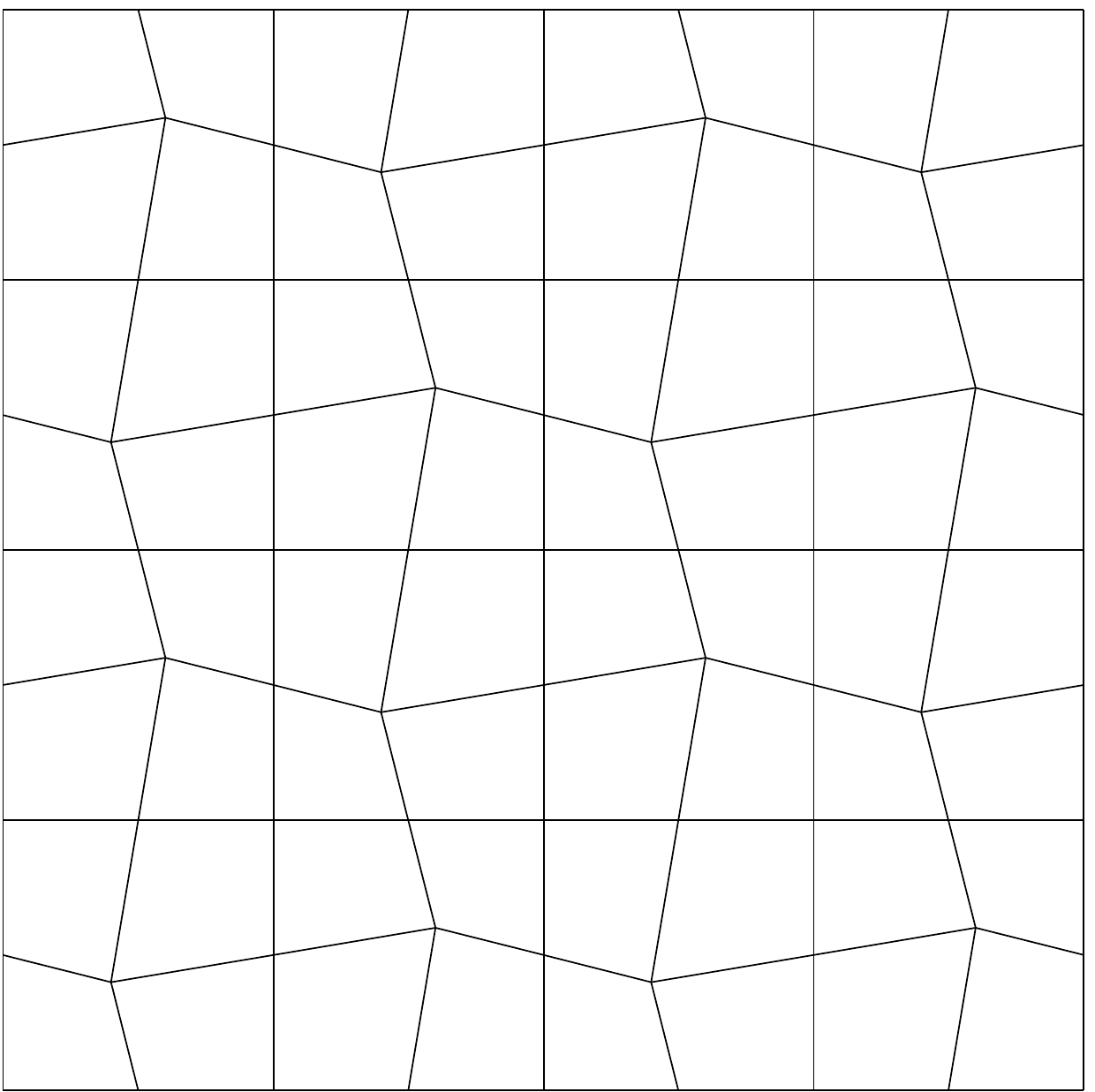}\;
\includegraphics[width=3.5cm]{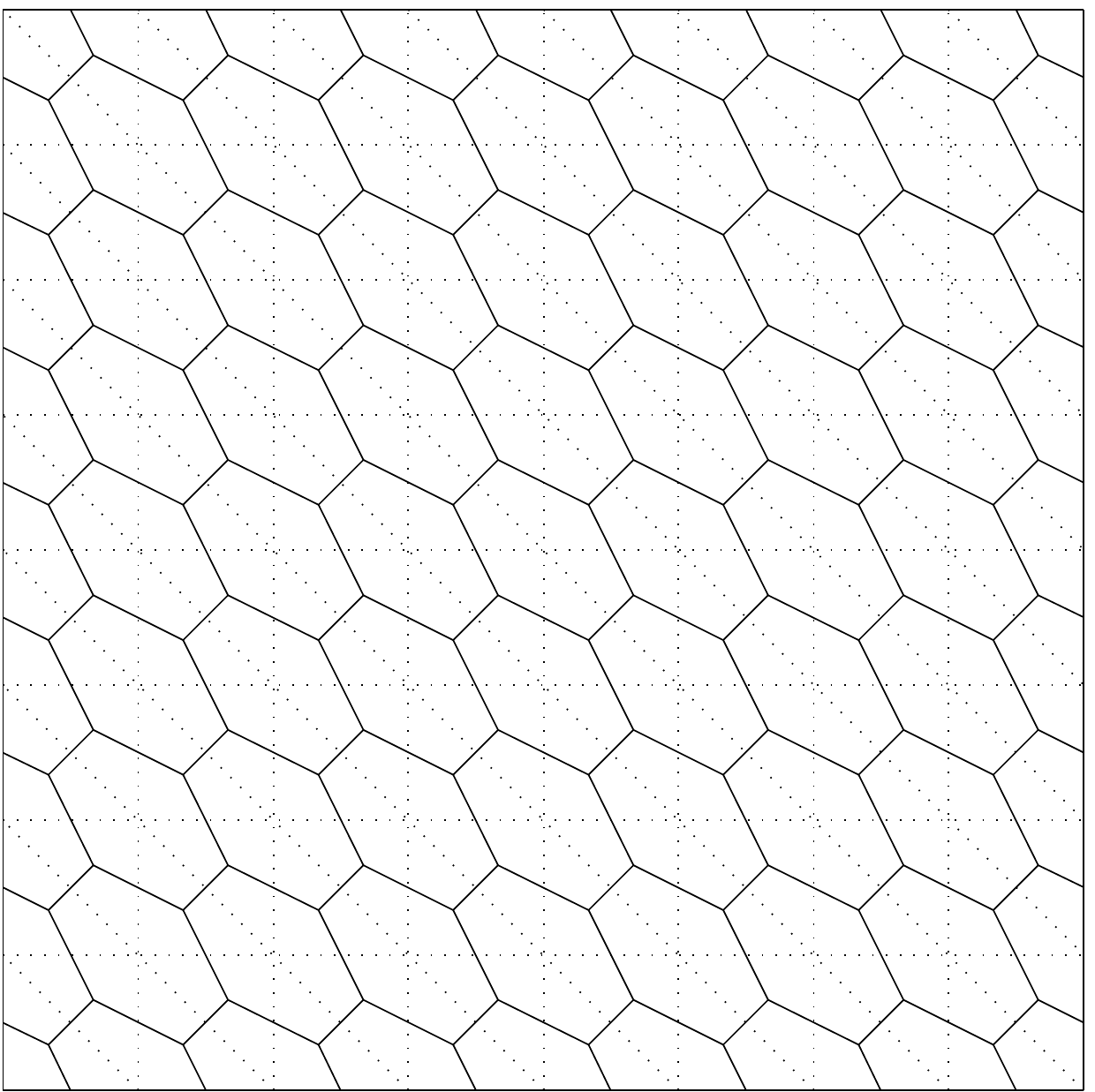}\;
\includegraphics[width=3.5cm]{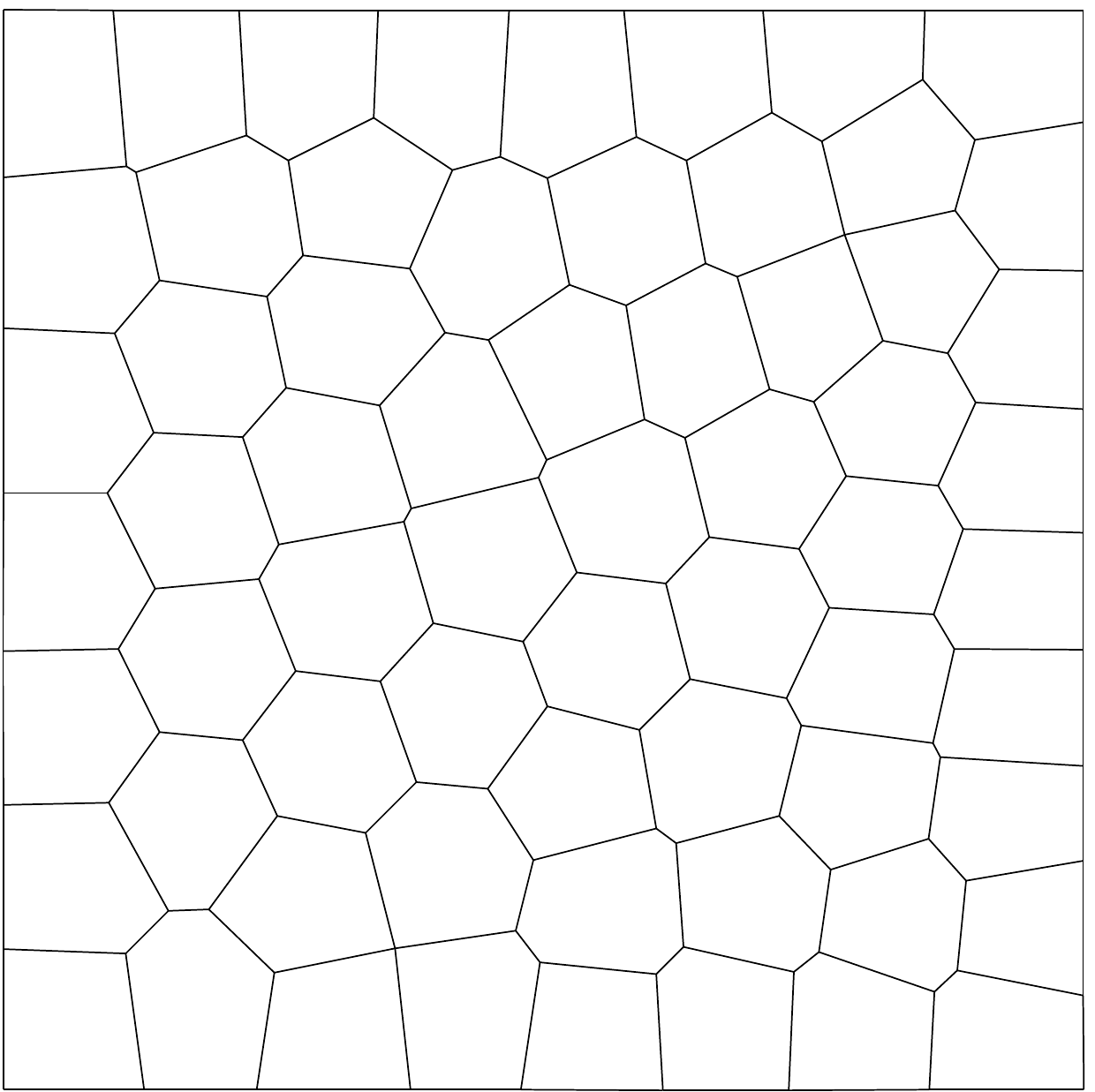}
\caption{Meshes of size $8\times 8$. (1) A quadrilateral mesh. (2) A
hexagonal mesh, with mostly hexagons and a few pentagons and
quadrilaterals. It is generated as the dual mesh of an $8\times 8$
uniform triangular mesh, as shown in dotted lines. (3) Centroidal
Voronoi tessellation consisting of $8\times 8$ cells (see
\cite{Du99} and references therein).} \label{fig:meshes}
\end{center}
\end{figure}

Consider the Poisson's equation on $(0,1)\times(0,1)$ with Dirichlet
boundary condition. We test this problem on three different types of
meshes, as shown in Figure \ref{fig:meshes}. Wachspress coordinates
are used to define $\lambda_i$. The example problem is solved on a
sequence of meshes, using the mixed finite element method with
$\chi(\cM\Lambda^1(T))$-$\bbR$ discretization. Denote by $\vp$ and
$u$ the exact flux and the exact primal solution, while by $\vp_h$
and $u_h$ the corresponding numerical solutions. We first set the
exact solution to be $u=\sin(\pi x)\sin(\pi y)$, which is smooth.
Numerical results are reported in Tables \ref{tab:numericalquad}-\ref{tab:numericalcvt},
in which the `order' is the value of $r$ in $O(h^r)$ computed using the errors on two consecutive meshes.
From the table we can see that $\|\vp-\vp_h\|_{L^2}$, $\|\div\vp -
\div \vp_h\|_{L^2}$ and $\|u-u_h\|_{L^2}$ have at least $O(h)$
convergence, which agrees well with the theoretical prediction.
We also point out that although the centroidal
Voronoi tessellation in Figure \ref{fig:meshes} appears
to contain very short edges, which may theoretically break the
condition given in \cite{Floater14} for the assumption
(\ref{eq:gradlambdaupperbound}), the numerical results presented in Table \ref{tab:numericalcvt}
seem to be unaffected.

\begin{table}[h]
\begin{center}
\caption{Example problem with exact solution $u=\sin(\pi x)\sin(\pi
y)$. Errors of the $\chi(\cM\Lambda^1(T))$-$\bbR$ approximation on
quadrilateral meshes as shown in Figure \ref{fig:meshes}.}
\label{tab:numericalquad}
\begin{tabular}{c|cc|cc|cc}
\hline\hline   &
\multicolumn{2}{|c|}{$\|\vp-\vp_h\|_{L^2}$} &
\multicolumn{2}{|c|}{$\|\div\vp - \div \vp_h\|_{L^2}$} &
\multicolumn{2}{|c}{$\|u-u_h\|_{L^2}$} \\
Mesh Size & error & order & error & order & error & order \\ \hline\hline
$4\times 4$     & 5.2843e-1 &      & 3.1580e+0 &      & 1.6184e-1 & \\ \hline
$8\times 8$     & 2.6040e-1 &1.0210& 1.6087e+0 &0.9731& 8.1764e-2 &0.9850 \\ \hline
$16\times 16$   & 1.2971e-1 &1.0054& 8.0813e-1 &0.9932& 4.0974e-2 &0.9968 \\ \hline
$32\times 32$   & 6.4810e-2 &1.0010& 4.0454e-1 &0.9983& 2.0498e-2 &0.9992 \\ \hline
$64\times 64$   & 3.2405e-2 &1.0000& 2.0233e-1 &0.9996& 1.0251e-2 &0.9997 \\ \hline
$128\times 128$ & 1.6204e-2 &0.9999& 1.0117e-1 &0.9999& 5.1255e-3 &1.0000 \\ \hline
$256\times 256$ & 8.1023e-3 &0.9999& 5.0587e-2 &0.9999& 2.5628e-3 &1.0000 \\ \hline
$512\times 512$ & 4.0513e-3 &0.9999& 2.5293e-2 &1.0000& 1.2814e-3 &1.0000 \\ \hline\hline
\end{tabular}
\end{center}
\end{table}

\begin{table}[h]
\begin{center}
\caption{Example problem with exact solution $u=\sin(\pi x)\sin(\pi
y)$. Errors of the $\chi(\cM\Lambda^1(T))$-$\bbR$ approximation on
hexagonal meshes as shown in Figure \ref{fig:meshes}.}
\label{tab:numericalhexa}
\begin{tabular}{c|cc|cc|cc}
\hline\hline   &
\multicolumn{2}{|c|}{$\|\vp-\vp_h\|_{L^2}$} &
\multicolumn{2}{|c|}{$\|\div\vp - \div \vp_h\|_{L^2}$} &
\multicolumn{2}{|c}{$\|u-u_h\|_{L^2}$} \\
Mesh Size & error & order & error & order & error & order \\ \hline\hline
$4\times 4$     & 2.7502e-1 &      & 2.6008e+0 &      & 1.3488e-1 & \\ \hline
$8\times 8$     & 1.0994e-1 &1.3228& 1.4988e+0 &0.7951& 7.6665e-2 &0.8150 \\ \hline
$16\times 16$   & 4.5041e-2 &1.2874& 7.9379e-1 &0.9170& 4.0330e-2 &0.9267 \\ \hline
$32\times 32$   & 2.0013e-2 &1.1703& 4.0721e-1 &0.9630& 2.0646e-2 &0.9660 \\ \hline
$64\times 64$   & 9.4150e-3 &1.0879& 2.0608e-1 &0.9826& 1.0442e-2 &0.9835 \\ \hline
$128\times 128$ & 4.5673e-3 &1.0436& 1.0365e-1 &0.9915& 5.2510e-3 &0.9917 \\ \hline
$256\times 256$ & 2.2498e-3 &1.0215& 5.1973e-2 &0.9959& 2.6330e-3 &0.9959 \\ \hline
$512\times 512$ & 1.1166e-3 &1.0107& 2.6023e-2 &0.9980& 1.3184e-3 &0.9979 \\ \hline\hline
\end{tabular}
\end{center}
\end{table}

\begin{table}[h]
\begin{center}
\caption{Example problem with exact solution $u=\sin(\pi x)\sin(\pi
y)$. Errors of the $\chi(\cM\Lambda^1(T))$-$\bbR$ approximation on
centroidal Voronoi tessellations as shown in Figure \ref{fig:meshes}.}
\label{tab:numericalcvt}
\begin{tabular}{c|cc|cc|cc}
\hline\hline   &
\multicolumn{2}{|c|}{$\|\vp-\vp_h\|_{L^2}$} &
\multicolumn{2}{|c|}{$\|\div\vp - \div \vp_h\|_{L^2}$} &
\multicolumn{2}{|c}{$\|u-u_h\|_{L^2}$} \\
Mesh Size & error & order & error & order & error & order \\ \hline\hline
$4\times 4$     & 4.5335e-1 &      & 3.1186e+0 &      & 1.6102e-1 & \\ \hline
$8\times 8$     & 1.8368e-1 &1.3034& 1.5915e+0 &0.9705& 8.1220e-2 &0.9873 \\ \hline
$16\times 16$   & 7.4684e-2 &1.2983& 7.7831e-1 &1.0320& 3.9513e-2 &1.0395 \\ \hline
$32\times 32$   & 2.9515e-2 &1.3394& 3.9116e-1 &0.9926& 1.9829e-2 &0.9947 \\ \hline
$64\times 64$   & 1.3361e-2 &1.1434& 1.9703e-1 &0.9893& 9.9831e-3 &0.9901 \\ \hline
$128\times 128$ & 6.3094e-3 &1.0825& 9.7955e-2 &1.0082& 4.9627e-3 &1.0084 \\ \hline
$256\times 256$ & 3.0048e-3 &1.0702& 4.8807e-2 &1.0050& 2.4726e-3 &1.0051 \\ \hline\hline
\end{tabular}
\end{center}
\end{table}

It would be interesting to compare the numerical results on quadrilateral meshes
given in Table \ref{tab:numericalquad}, with the numerical results of
the lowest order Raviart-Thomas element presented in \cite{Arnold05}.
The Raviart-Thomas element can be extended to convex quadrilaterals via
the Piola transform associated to a bilinear isomorphism, but with a
degeneration of approximation rate in $\|\div(\vp-\vp_h)\|_{L^2}$
(see \cite{Arnold05}). It is not hard to check that, on
quadrilaterals that are not parallelograms, the space
$\chi\left(\mathcal{M}\Lambda^1(T)\right)$ is indeed different from
the polynomial-valued, lowest-order Raviart-Thomas element via Piola transform, because
in this case $\chi\left(\mathcal{M}\Lambda^1(T)\right)$ consists of
rational functions. Therefore, the
$\chi\left(\mathcal{M}\Lambda^1(T)\right)$-$\bbR$ discretization
will still provide optimal $O(h)$ convergence rate in
$\|\div(\vp-\vp_h)\|_{L^2}$, as shown in Table \ref{tab:numericalquad}.
In comparison, numerical results given in
\cite{Arnold05}, using the lowest order Raviart-Thomas element via
Piola transform, does not convergence in $\|\div\vp -
\div \vp_h\|_{L^2}$ when the mesh consists of general quadrilaterals.

\begin{table}[h]
\begin{center}
\caption{Example problem with exact solution $u\in H^{3/2}$.
Errors of the $\chi(\cM\Lambda^1(T))$-$\bbR$ approximation on
quadrilateral meshes as shown in Figure \ref{fig:meshes}.}
\label{tab:numerical2}
\begin{tabular}{c|cc|cc}
\hline\hline   &
\multicolumn{2}{|c|}{$\|\vp-\vp_h\|_{L^2}$} &
\multicolumn{2}{|c}{$\|u-u_h\|_{L^2}$} \\
Mesh Size & error & order & error & order \\ \hline\hline
$4\times 4$     &  9.1202e-2 &      & 4.6653e-2 & \\ \hline
$8\times 8$     &  6.5317e-2 &0.4816& 2.3850e-2 &0.9680 \\ \hline
$16\times 16$   &  4.6480e-2 &0.4909& 1.2045e-2 &0.9856 \\ \hline
$32\times 32$   &  3.2970e-2 &0.4955& 6.0512e-3 &0.9931 \\ \hline
$64\times 64$   &  2.3350e-2 &0.4977& 3.0326e-3 &0.9967 \\ \hline
$128\times 128$ &  1.6524e-2 &0.4989& 1.5181e-3 &0.9983 \\ \hline
$256\times 256$ &  1.1689e-2 &0.4994& 7.5946e-4 &0.9992 \\ \hline
$512\times 512$ &  8.2669e-3 &0.4997& 3.7984e-4 &0.9996 \\\hline\hline
\end{tabular}
\end{center}
\end{table}

We also test a second example problem, under the same settings but with
exact solution $u = \sqrt{\frac{1}{2}(\rho-x)} - \frac{1}{4}\rho^2$, where
$\rho$ is the radius in polar coordinates. One can easily verify that
$-\Delta u = 1$ on $(0,1)\times(0,1)$, and moreover, $u\in
H^{3/2}((0,1)^2)$. Numerical results for the second example problem
using the quadrilateral meshes
are reported in Table \ref{tab:numerical2}. Note that $\|\div\vp -
\div \vp_h\|_{L^2}$ is not included since for this test problem, one
has $\div\vp = \div \vp_h \equiv -1$. From the table, we observe
that $\|\vp-\vp_h\|_{L^2}$ is of approximately $O(h^{1/2})$, which
is reasonable because $\vp\in (H^{1/2})^2$, while $\|u-u_h\|_{L^2}
\approx O(h)$ because $u$ is in $H^{3/2}$.

%%%%%%%%%%%%%%%%%%%%%%%%%%%%%%%%%%%%%%%%%%%%%%%%%%%%%%%%%%%%%
%%%%%%%%%%%%%%%%%%%%%%%%%%%%%%%%%%%%%%%%%%%%%%%%%%%%%%%%%%%%%
%%%%%%%%%%%%%%%%%%%%%%%%%%%%%%%%%%%%%%%%%%%%%%%%%%%%%%%%%%%%%
\section{Construction in 3D} \label{sec:3D}

%%%%%%%%%%%%%%%%%%%%%%%%%%%%%%%%%%%%%%%%%%%%%%%%%%%%%%%%%%%%%
\subsection{Definitions and properties}
Let $T$ be a convex polyhedron satisfying Assumptions 1-2. Denote by
$\vv_i$, $i=1,\ldots,n$, the vertices of $T$. Then, for each pair of
indices $\{i,j\}$, $1\le i,j\le n$, we have the Whitney $1$-form
$W_{ij}$. Similarly, for each triplet of indices $\{i,j,k\}$, $1\le
i,j,k\le n$, we have the Whitney $2$-form $W_{ijk}$. It is not hard
to see that Whitney forms have the following properties:
$$
\begin{aligned}
W_{ii} &= 0,\qquad W_{ij} = -W_{ji}, \\
W_{ijk} &= 0, \quad\textrm{if at least two of }i,j,k\textrm{ are
identical}, \\
W_{ijk} &= W_{jki} = W_{kij} = - W_{ikj} = -W_{jki} = -W_{kji}.
\end{aligned}
$$
Moreover, using the definition of Whitney forms, Equation
(\ref{eq:GBCLinearPrecision}) and elementary vector calculus
identities, one has
\begin{equation} \label{eq:curlWij}
\curl\, W_{ij} = 2 \nabla \lambda_i \times \nabla\lambda_j = 2
\sum_{k=1}^n W_{ijk}.
\end{equation}
We also state a result from \cite{Gillette14}. Denote by
$\vtau_{ij}=\vv_j-\vv_i$ for all $1\le i,j\le n$. For any constant
vector $\va\in \bbR^3$, one has
\begin{equation} \label{eq:WijLinearCombinations}
\begin{aligned}
\frac{1}{2} \sum_{1\le i, j\le n} (\va\cdot\vtau_{ij}) W_{ij} &=
\sum_{i<j} (\va\cdot\vtau_{ij}) W_{ij} = \va, \\
\frac{1}{2} \sum_{1\le i, j\le n} ((\va\times\vv_i)\cdot\vtau_{ij})
W_{ij} &= \sum_{i<j} ((\va\times\vv_i)\cdot\vtau_{ij}) W_{ij} \\
&= \sum_{i<j} ((\va\times\vv_i)\cdot\vv_j) W_{ij} = \va\times \vx.
\end{aligned}
\end{equation}

The reason that Whitney forms are so important in the construction
of $H(\curl)$ and $H(\div)$ spaces is that, they naturally satisfy
certain conditions on edges/faces of $T$. Before summarizing these
in lemmas, we first need to clarify the concept of `edges'. Denote
by $e_{ij}$ the directed line segment pointing from $\vv_i$ to
$\vv_j$, and by $|e_{ij}|$ its length . Notice that $e_{ij}$ may not
be a natural edge of polyhedron $T$. Indeed, we classify all
$e_{ij}$ into three disjoint categories:
\begin{enumerate}
\item $\cE$ is the set of all $e_{ij}$ that coincides with a natural
edge of $T$;
\item $\cE_F$ is the set of all $e_{ij}$ lying on $\partial T$ but
not in $\cE$;
\item $\cE_I$ is the set of all $e_{ij}$ in the interior of $T$,
i.e., not lying on $\partial T$.
\end{enumerate}
An illustration of these categories is given in Figure
\ref{fig:threeEdgeTypes}. We point out that each category actually
contains both $e_{ij}$ and $e_{ji}$, for a given pair of indices $i$
and $j$. The union of all three categories covers all $e_{ij}$, for
$1\le i,j\le n$. Notice that $\cE_F$ and $\cE_I$ can be empty for
certain polyhedra. On each $e_{ij}$, denote by
 $\vt_{ij}$ the unit tangential
vector pointing from $\vv_i$ to $\vv_j$. We emphasize that only the
$e_{ij}\in\cE$ will be called an `edge' of $T$, while the others are
just called `directed line segments'.

\begin{figure}[h]
\begin{center}
\includegraphics[width=8cm]{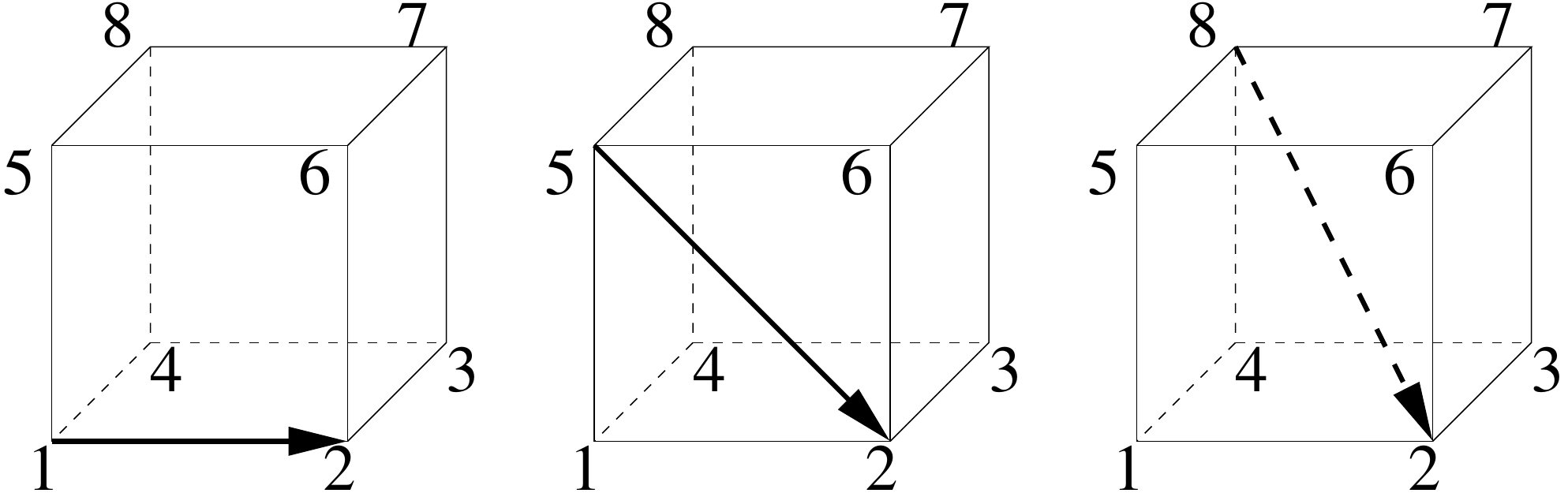}
\caption{Illustration of three categories: $e_{12}\in \cE$,
$e_{52}\in \cE_F$, $e_{82}\in \cE_I$.} \label{fig:threeEdgeTypes}
\end{center}
\end{figure}

\begin{lem} \label{lem:WijOnEdges}
Let $e_{kl}\in \cE$. Then for all $1\le i,j\le n$, one has
$$
W_{ij}\cdot \vt_{kl}|_{e_{kl}} = \begin{cases}
\frac{1}{|e_{ij}|}\quad
&\textrm{if }e_{ij} = e_{kl}, \\
-\frac{1}{|e_{ij}|}\quad
&\textrm{if }e_{ij} = e_{lk}, \\
0 &\textrm{otherwise}.
\end{cases}
$$
\end{lem}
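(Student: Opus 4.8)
The plan is to reduce the whole statement to the behaviour of the generalized barycentric coordinates along a single natural edge. First I would fix $e_{kl}\in\cE$ and note that, since $e_{kl}$ is a genuine edge of $T$, it occurs as one of the bounding segments of at least one face $f$ of $T$. Applying the trace property of Assumption~1 twice — first to pass from the 3D coordinate $\lambda_m$ to its restriction $\lambda_m|_f$, which is a 2D generalized barycentric coordinate on $f$ (with $\lambda_m|_f\equiv 0$ when $\vv_m$ is not a vertex of $f$), and then the 2D trace property, i.e.\ piecewise linearity of a 2D generalized barycentric coordinate on $\partial f$ (cf.\ \cite{Floater06}) — shows that $\lambda_m|_{e_{kl}}$ is an affine function of the arc-length parameter for every $1\le m\le n$. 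By the Lagrange property this affine function is pinned down completely: $\lambda_m\equiv 0$ on $e_{kl}$ whenever $m\notin\{k,l\}$, while $\lambda_k|_{e_{kl}}$ and $\lambda_l|_{e_{kl}}$ are the affine functions taking the values $1,0$ at $\vv_k,\vv_l$ and $0,1$ at $\vv_k,\vv_l$ respectively; in particular $\lambda_k+\lambda_l\equiv 1$ on $e_{kl}$ by linear precision.

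Next, using that each $\lambda_m\in C^1(T)$, I would compute the tangential derivatives: for $\vx\in e_{kl}$ the directional derivative $\nabla\lambda_m\cdot\vt_{kl}$ coincides with the one-dimensional derivative of the affine function $\lambda_m|_{e_{kl}}$, hence is constant along $e_{kl}$ and equal to $(\delta_{ml}-\delta_{mk})/|e_{kl}|$. Substituting into $W_{ij}\cdot\vt_{kl} = \lambda_i(\nabla\lambda_j\cdot\vt_{kl}) - \lambda_j(\nabla\lambda_i\cdot\vt_{kl})$ and performing a short case split on how $\{i,j\}$ meets $\{k,l\}$ finishes the proof: if $\{i,j\}\cap\{k,l\}=\emptyset$, or exactly one index lies in $\{k,l\}$, then in each term either the relevant restriction $\lambda_i|_{e_{kl}}$ (resp.\ $\lambda_j|_{e_{kl}}$) or the relevant Kronecker delta vanishes, so $W_{ij}\cdot\vt_{kl}\equiv 0$; if $\{i,j\}=\{k,l\}$ the two surviving terms combine to $(\lambda_k+\lambda_l)/|e_{kl}|=1/|e_{kl}|$, and the antisymmetry $W_{ij}=-W_{ji}$ together with $|e_{ij}|=|e_{kl}|$ accounts for the sign; the degenerate case $i=j$ is subsumed in the "otherwise" branch since $W_{ii}=0$.

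I expect the only genuinely delicate point to be the first step, i.e.\ the claim that each $\lambda_m$ restricts to an affine function on a natural edge. One must be careful that $e_{kl}$ truly appears as a bounding edge of a face $f$ of $T$ (so that the 2D trace property is applicable there) and that the phrase "degenerates into a 2D generalized barycentric coordinate on each face" in Assumption~1 indeed delivers exactly the piecewise-linearity needed on $\partial f$ — including the case $\vv_m\notin f$, where $\lambda_m|_f\equiv 0$ and so is trivially affine on $e_{kl}$. Once this affine structure on $e_{kl}$ is in hand, the remainder is a routine substitution and a finite case analysis.
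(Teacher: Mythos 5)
Your proof is correct and takes essentially the same route as the paper's, whose entire argument is that the claim ``follows immediately from the definitions of $\lambda_i$, $W_{ij}$ and Assumption 1''; you have simply made explicit the details that proof leaves implicit (linearity of each $\lambda_m$ on a natural edge via the trace and Lagrange properties, the constant tangential derivatives $(\delta_{ml}-\delta_{mk})/|e_{kl}|$, and the case split on $\{i,j\}\cap\{k,l\}$). The sign bookkeeping and the use of $\lambda_k+\lambda_l\equiv 1$ on $e_{kl}$ are all in order.
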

\begin{proof}
The proof follows immediately from the definitions of $\lambda_i$,
$W_{ij}$ and Assumption 1, which states that $\lambda_i$ is linear
on all $e_{kl}\in \cE$.
\end{proof}

\begin{rem}
On $e_{kl}\in\cE_F$ or $\cE_I$, we do not have results similar to
Lemma \ref{lem:WijOnEdges}, since $\lambda_i$ may not even be linear
on $e_{kl}$.
\end{rem}

Next we define another important form on each $e_{ij}\in\cE$. Denote
by $\cF_{ij}$ the set of two faces of polyhedron $T$ that share the
edge $e_{ij}$, and by $\cV_{ij}$ the set of all vertices on
$\cF_{ij}$. For a fixed index $1\le i\le n$, note that any
$\vtau_{ik}$, for $e_{ik}\in\cE_I$ can be written as a linear
combination of all $\vtau_{ij}$, for $e_{ij}\in\cE$. Such a linear
combination is not uniquely defined if vertex $\vv_i$ is connected
to more than 3 edges of the polyhedron. Nevertheless, we can always
fix a linear combination for each vertex $\vv_i$, and denote this
chosen one by
\begin{equation} \label{eq:Cikij}
\vtau_{ik} = \sum_{j,\,e_{ij}\in\cE} C^{ik}_{ij} \vtau_{ij}.
\end{equation}
Now, define
$$
\begin{aligned}
\tW_{ij} &= W_{ij} + \frac{1}{2} \left(\sum_{\vv_k\in\cV_{ij},\,
e_{ik}\in\cE_F} W_{ik} - \sum_{\vv_k\in\cV_{ij},\,e_{jk}\in\cE_F}
W_{jk} \right) \\
&\qquad + \frac{1}{2} \left(\sum_{k,\, e_{ik}\in\cE_I} C^{ik}_{ij}
W_{ik} - \sum_{k,\, e_{jk}\in\cE_I} C^{jk}_{ji} W_{jk} \right).
\end{aligned}
$$
In the above, one may view $W_{ij} + \frac{1}{2}
\left(\sum_{\vv_k\in\cV_{ij},\, e_{ik}\in\cE_F} W_{ik} -
\sum_{\vv_k\in\cV_{ij},\,e_{jk}\in\cE_F} W_{jk} \right)$ as the
`surface' component of $\tW_{ij}$ and $\frac{1}{2} \left(\sum_{k,\,
e_{ik}\in\cE_I} C^{ik}_{ij} W_{ik} - \sum_{k,\, e_{jk}\in\cE_I}
C^{jk}_{ji} W_{jk} \right)$ as the `interior' component of
$\tW_{ij}$. An illustration of the surface component of $\tW_{ij}$,
which can also be written as $W_{ij} + \frac{1}{2}
\left(\sum_{\vv_k\in\cV_{ij},\, e_{ik}\in\cE_F} W_{ik} +
\sum_{\vv_k\in\cV_{ij},\,e_{kj}\in\cE_F} W_{kj} \right)$, is given
in Figure \ref{fig:tildeWij}. Note that if both faces sharing
$e_{ij}$ are triangles, the surface component of $\tW_{ij}$ is just
$W_{ij}$.

\begin{figure}[h]
\begin{center}
\includegraphics[width=8cm]{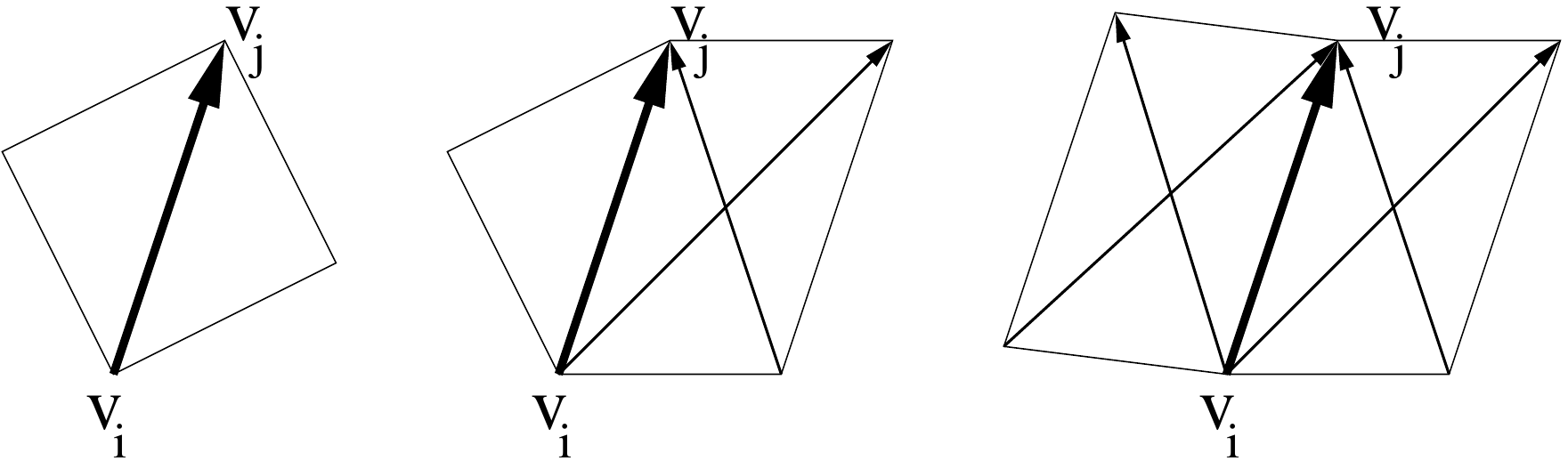}
\caption{Illustration of surface component of $\tW_{ij}$ when
$e_{ij}\in\cE$ is shared by two faces of $T$ which are: (1) two
triangles; (2) one triangle and one parallelogram; (3) two
parallelograms. Here we conveniently use thick arrow to denote
$W_{kl}$ and thin arrow to denote $\frac{1}{2} W_{kl}$ on any
$e_{kl}$.} \label{fig:tildeWij}
\end{center}
\end{figure}

The vector function $\tW_{ij}$ has many nice properties. First, it
is obvious that $\tW_{ij} = -\tW_{ji}$. Now, let us fix a direction
for each edge of $T$. The collection of all edges in $\cE$, with the
prefixed direction, is denoted by $\cE^+$. Similarly, one may denote
the collection of all edges in $\cE$ with direction opposite to the
prefixed one as $\cE^-$. The two sets $\cE^+$ and $\cE^-$ contain
the same edges, but with opposite directions. For any two edges
$e_{ij}$ and $e_{kl}$ in $\cE^+$, denote by $\delta_{e_{ij},e_{kl}}$
the Kronecker delta whose value is $1$ if $e_{ij}=e_{kl}$ and $0$
otherwise. Then, we have the following lemmas:

\begin{lem} \label{lem:twijOnEdges}
The set $\{\tW_{ij},\textrm{ for }e_{ij}\in \cE^+\}$ satisfy
$\tW_{ij}\cdot \vt_{kl}|_{e_{kl}} =
\frac{1}{|e_{ij}|}\delta_{e_{ij},e_{kl}}$ for all $e_{kl}\in\cE^+$,
and hence is linearly independent.
\end{lem}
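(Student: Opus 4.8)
The plan is to evaluate $\tW_{ij}\cdot\vt_{kl}$ on a fixed edge $e_{kl}\in\cE^+$ by splitting $\tW_{ij}$ into its three groups of terms and applying Lemma \ref{lem:WijOnEdges} to each $W_{pq}$ separately. The crucial observation is that $e_{kl}\in\cE$ is a \emph{natural} edge of $T$, so Lemma \ref{lem:WijOnEdges} tells us that $W_{pq}\cdot\vt_{kl}|_{e_{kl}}$ is nonzero only when $\{p,q\}=\{k,l\}$, in which case it equals $\pm\frac{1}{|e_{kl}|}$. Therefore, when we restrict $\tW_{ij}$ to $e_{kl}$, only those summands $W_{pq}$ with $\{p,q\}=\{k,l\}$ survive. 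Now examine where such summands can appear in the definition of $\tW_{ij}$: the leading term $W_{ij}$ contributes iff $\{i,j\}=\{k,l\}$; the surface-correction sums range over $W_{ik}$ and $W_{jk}$ with $e_{ik},e_{jk}\in\cE_F$, and the interior sums over $W_{ik},W_{jk}$ with $e_{ik},e_{jk}\in\cE_I$. Since $e_{kl}\in\cE$ is disjoint from $\cE_F$ and $\cE_I$, none of the correction terms is of the form $W_{kl}$ unless one of its indices is forced to coincide, which the category constraints prevent. Hence on $e_{kl}$ only the leading term can survive.

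More carefully, I would argue by cases. If $e_{ij}=e_{kl}$ in $\cE^+$, then the leading $W_{ij}$ contributes $\frac{1}{|e_{ij}|}$ by the first case of Lemma \ref{lem:WijOnEdges}, and every correction term is $W_{ik}$ or $W_{jk}$ with $e_{ik}$ or $e_{jk}$ in $\cE_F\cup\cE_I$; such a directed segment cannot equal $e_{kl}$ or $e_{lk}$, because $e_{kl}\in\cE$ while $\cE_F$ and $\cE_I$ are disjoint from $\cE$. So the correction terms all vanish on $e_{kl}$ by the third case of Lemma \ref{lem:WijOnEdges}, and we get exactly $\frac{1}{|e_{ij}|}$. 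If $\{i,j\}\neq\{k,l\}$, the leading term $W_{ij}$ vanishes on $e_{kl}$, and again each correction term $W_{ik},W_{jk}$ carries a segment in $\cE_F\cup\cE_I$, which is never $e_{kl}$ or $e_{lk}$; so the whole expression vanishes. In both cases $\tW_{ij}\cdot\vt_{kl}|_{e_{kl}}=\frac{1}{|e_{ij}|}\delta_{e_{ij},e_{kl}}$, as claimed.

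For linear independence, suppose $\sum_{e_{ij}\in\cE^+} a_{ij}\tW_{ij}=\mathbf{0}$. Dotting with $\vt_{kl}$ and restricting to $e_{kl}\in\cE^+$ kills every term except $a_{kl}\tW_{kl}\cdot\vt_{kl}|_{e_{kl}}=\frac{a_{kl}}{|e_{kl}|}$, whence $a_{kl}=0$ for every $e_{kl}\in\cE^+$. Thus the set is linearly independent. The only step requiring genuine care is checking that no correction term $W_{ik}$ or $W_{jk}$ can accidentally equal $W_{kl}$ as a function — i.e. that $\{i,k\}$ or $\{j,k\}$ cannot equal $\{k,l\}$ in a way that escapes the category hypotheses. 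This follows because the summation indices in the correction terms are explicitly restricted to $e_{ik}\in\cE_F$ (or $\cE_I$) and $e_{jk}\in\cE_F$ (or $\cE_I$), and by definition these categories are disjoint from $\cE$, which contains $e_{kl}$. I expect this disjointness bookkeeping to be the main (though minor) obstacle; everything else is a direct application of Lemma \ref{lem:WijOnEdges}.
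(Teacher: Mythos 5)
Your proposal is correct and follows essentially the same route as the paper's (very terse) proof: apply Lemma \ref{lem:WijOnEdges} term by term to the decomposition of $\tW_{ij}$, note that the correction terms involve segments in $\cE_F\cup\cE_I$, which are disjoint from $\cE$ and hence have vanishing tangential trace on any natural edge $e_{kl}$, and then deduce linear independence from the resulting Kronecker-delta property. The only cosmetic remark is that your case split ($e_{ij}=e_{kl}$ versus $\{i,j\}\neq\{k,l\}$) silently skips the reversed case $e_{ij}=e_{lk}$, which is vacuous here since $\cE^+$ contains each edge with only one fixed orientation.
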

\begin{proof}
This follows immediately from the definition of $\tW_{ij}$, Lemma
\ref{lem:WijOnEdges}, and the fact that $\sum_{e_{ij}\in\cE^+}
c_{ij}\tW_{ij} = \mathbf{0}$ implies that $c_{kl} =
|e_{kl}|\left(\sum_{e_{ij}\in\cE^+} c_{ij}\tW_{ij}\right)\cdot
\vt_{kl}|_{e_{kl}} = 0$ for all $e_{kl}\in\cE^+$.
\end{proof}

\begin{lem} \label{lem:tWcontainP1-}
It holds that $\cP_1^-\Lambda^1(T)\subseteq span \{\tW_{ij},\textrm{
for }e_{ij}\in \cE^+\}$.
\end{lem}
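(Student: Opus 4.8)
The plan is to use the identification $\cP_1^-\Lambda^1(T)=\{\va\times\vx+\vb:\va,\vb\in\bbR^3\}$ from \eqref{eq:WLambdaTincludeP-LambdaT}, so that it suffices to show that each constant field $\va$ and each field $\va\times\vx$ lies in $span\{\tW_{ij}:e_{ij}\in\cE^+\}$. The natural candidates, obtained by restricting the sums in \eqref{eq:WijLinearCombinations} from all vertex pairs to genuine edges, are
$$
\va \;=\; \sum_{e_{ij}\in\cE^+}(\va\cdot\vtau_{ij})\,\tW_{ij},
\qquad
\va\times\vx \;=\; \sum_{e_{ij}\in\cE^+}\bigl((\va\times\vv_i)\cdot\vtau_{ij}\bigr)\,\tW_{ij}.
$$
Each summand is independent of the orientation chosen for $e_{ij}$, since $\vtau_{ij}$, $\tW_{ij}$, and $(\va\times\vv_i)\cdot\vtau_{ij}=(\va\times\vv_i)\cdot\vv_j=\det(\va,\vv_i,\vv_j)$ all reverse sign under $i\leftrightarrow j$; thus each edge may be oriented as convenient.

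To verify these two identities, substitute the definition of $\tW_{ij}$ and expand both sides in the generating family $\{W_{pq}\}_{1\le p,q\le n}$, comparing with \eqref{eq:WijLinearCombinations}. Writing $\tW_{ij}$ as the sum of its leading term $W_{ij}$, its $\cE_F$-valued surface correction, and its $\cE_I$-valued interior correction, the contributions separate by edge type: a function $W_{pq}$ with $e_{pq}\in\cE$ is produced only by leading terms, one with $e_{pq}\in\cE_F$ only by surface corrections, and one with $e_{pq}\in\cE_I$ only by interior corrections. Hence the coefficient of $W_{pq}$ with $e_{pq}\in\cE$ equals $\va\cdot\vtau_{pq}$ (resp. $(\va\times\vv_p)\cdot\vtau_{pq}$) and matches \eqref{eq:WijLinearCombinations} directly. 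For $e_{pq}\in\cE_I$ one collects, over the edges $e_{pj}\in\cE$ incident to $p$ (oriented away from $p$), the $\tfrac12 C^{pq}_{pj}$-weighted contributions of the $\tW_{pj}$; by \eqref{eq:Cikij} these add up to $\tfrac12\sum_j(\va\cdot\vtau_{pj})C^{pq}_{pj}=\tfrac12(\va\cdot\vtau_{pq})$, and the collection anchored at $q$ contributes another $\tfrac12(\va\cdot\vtau_{pq})$, for a total of $\va\cdot\vtau_{pq}$, again matching. For $e_{pq}\in\cE_F$ — which, since two distinct faces of a convex polyhedron share at most an edge, is necessarily a diagonal of a unique parallelogram face of $T$, with $p,q$ opposite vertices and $a,b$ the two neighbours of $p$ on that face — the parallelogram law $\vtau_{pq}=\vtau_{pa}+\vtau_{pb}$ converts the $\tfrac12$-weighted contributions of $\tW_{pa},\tW_{pb}$ into $\tfrac12(\va\cdot\vtau_{pq})$, with the remaining $\tfrac12(\va\cdot\vtau_{pq})$ coming from $\tW_{qa},\tW_{qb}$. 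Summing over all $p,q$ then reproduces the right-hand side of \eqref{eq:WijLinearCombinations}, i.e. $\va$. The computation for $\va\times\vx$ is word-for-word the same after replacing $\va\cdot(\,\cdot\,)$ everywhere by $(\va\times\vv_c)\cdot(\,\cdot\,)$ with $c$ the relevant anchor vertex, which remains compatible with \eqref{eq:Cikij} and with the parallelogram law because the anchor is held fixed in each collection, matching the form $((\va\times\vv_i)\cdot\vtau_{ij})W_{ij}$ in \eqref{eq:WijLinearCombinations}.

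I expect the real difficulty to be organisational rather than analytical: one must keep careful track of exactly which $\tW_e$ (with $e\in\cE$) a given $W_{pq}$, $e_{pq}\in\cE_F\cup\cE_I$, occurs in — precisely those $e$ incident to $p$ or to $q$ with $q\in\cV_e$ (resp. those entering the chosen representation \eqref{eq:Cikij}) — and one must see that the two factors of $\tfrac12$ contributed by the two endpoints of $e_{pq}$ add to $1$. The latter is exactly the $i\leftrightarrow j$ symmetry recorded above, e.g. $-\bigl((\va\times\vv_q)\cdot\vtau_{qp}\bigr)=(\va\times\vv_p)\cdot\vtau_{pq}$. Once this bookkeeping is laid out, the lemma follows from \eqref{eq:WijLinearCombinations}, \eqref{eq:Cikij}, and the elementary parallelogram law.
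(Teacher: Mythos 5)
Your proof is correct and follows essentially the same route as the paper's: both take the candidate combinations $\sum_{e_{ij}\in\cE^+}(\va\cdot\vtau_{ij})\tW_{ij}$ and $\sum_{e_{ij}\in\cE^+}((\va\times\vv_i)\cdot\vtau_{ij})\tW_{ij}$, expand each $\tW_{ij}$ into its leading, surface and interior parts, and use \eqref{eq:Cikij}, the parallelogram structure guaranteed by Assumption 2, and the antisymmetry $(\va\times\vv_i)\cdot\vtau_{ij}=-(\va\times\vv_j)\cdot\vtau_{ji}$ to reassemble the full double sum of \eqref{eq:WijLinearCombinations}. The paper records this telescoping in a single display and leaves the coefficient bookkeeping implicit; your write-up simply makes that bookkeeping explicit.
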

\begin{proof}
Let us first point out that $span \{\tW_{ij},\textrm{ for }e_{ij}\in
\cE^+\} = span \{\tW_{ij},\textrm{ for }e_{ij}\in \cE\}$. By the
definitions of $\tW_{ij}$ and $C^{ik}_{ij}$, Equation
(\ref{eq:WijLinearCombinations}), Assumption 2, and the fact that
$\vv_i\times\vtau_{ij} = -\vv_j\times\vtau_{ji}$, for any
$\va\in\bbR^3$ one has
$$
\begin{aligned}
\sum_{e_{ij}\in\cE} &((\va\times \vv_i) \cdot\vtau_{ij}) \tW_{ij} =
\sum_{e_{ij}\in\cE} ((\va\times \vv_i) \cdot\vtau_{ij}) W_{ij} \\
& + \frac{1}{2} \sum_{i=1}^n \sum_{k,\, e_{ik}\in\cE_F} ((\va\times
\vv_i) \cdot\vtau_{ik}) W_{ik} + \frac{1}{2} \sum_{j=1}^n
\sum_{k,\,e_{jk}\in\cE_F} ((\va\times
\vv_j) \cdot\vtau_{jk}) W_{jk}  \\
& + \frac{1}{2} \sum_{i=1}^n \sum_{k,\, e_{ik}\in\cE_I} ((\va\times
\vv_i) \cdot\vtau_{ik}) W_{ik} + \frac{1}{2} \sum_{j=1}^n \sum_{k,\,
e_{jk}\in\cE_I} ((\va\times \vv_j) \cdot\vtau_{jk}) W_{jk}\\
&= \sum_{i=1}^n \left( \sum_{k=1}^n ((\va\times \vv_i)
\cdot\vtau_{ik}) W_{ik} \right) \\
&= 2\va\times\vx.
\end{aligned}
$$
This indicates that $\va\times\vx\in span \{\tW_{ij},\textrm{ for
}e_{ij}\in \cE^+\}$. Similarly, one can prove that for any
$\vb\in\bbR^3$,
$$
\sum_{e_{ij}\in\cE} (\vb\cdot\vtau_{ij}) \tW_{ij} = 2\vb.
$$
Recall that $\cP_1^-\Lambda^1(T) = span \{\va\times \vx + \vb,
\textrm{ for all }\va,\vb\in\bbR^3\}$. This completes the proof of
the lemma.
\end{proof}

Denote by $\cF$ the set of all faces of $T$, and by $\vn_f$ the unit
outward normal vector on $f\in\cF$ with respect to $T$. For each
$f\in\cF$, denote by $|f|$ its area and by $\partial f$ the oriented
boundary of $f$ such that its orientation satisfies the right-hand
rule with $\vn_f$. If $e_{ij}$ lies on $\partial f$ and has the same
direction as the orientation of $\partial f$, we say $e_{ij}\in
\partial f$. If $e_{ij}$ lies on $\partial f$ and has the opposite
direction as the orientation of $\partial f$, we say $e_{ij}\in
-\partial f$.

\begin{lem} \label{lem:curltw}
Let $f\in \cF$ and $e_{ij}\in\cE$, then one has
$$
\curl \tW_{ij} \cdot \vn_f |_f = \begin{cases} \frac{1}{|f|}\quad &
\textrm{if }e_{ij}\in\partial f, \\
- \frac{1}{|f|}\quad & \textrm{if }e_{ij}\in -\partial
f,\\
0 \quad&\textrm{otherwise}.\end{cases}
$$
\end{lem}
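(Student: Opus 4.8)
The plan is to restrict $\curl\tW_{ij}$ to $f$, pair it with $\vn_f$, and reduce to a two-dimensional identity on $f$ that is governed by Lemmas~\ref{lem:triangleCross}--\ref{lem:parallelogramCross}. By (\ref{eq:curlWij}) and linearity, $\curl\tW_{ij}$ is a fixed linear combination of the constant vectors $\nabla\lambda_k\times\nabla\lambda_l$, one for each term $W_{kl}$ in the definition of $\tW_{ij}$. For any two vectors, $(\vu\times\vw)\cdot\vn_f$ depends only on the components of $\vu$ and $\vw$ tangent to the plane of $f$; restricted to $f$, the tangential parts of $\nabla\lambda_k$ and $\nabla\lambda_l$ are precisely the two-dimensional gradients of the traces $\lambda_k|_f$ and $\lambda_l|_f$, by the trace property in Assumption~1. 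Moreover $\lambda_k|_f\equiv0$ whenever $\vv_k\notin f$, since $\sum_{m}\lambda_m\equiv1$ restricts to the partition of unity of the $2$D generalized barycentric coordinates on $f$, each of which is non-negative. Consequently $W_{kl}$ contributes to $\curl\tW_{ij}\cdot\vn_f|_f$ only when both $\vv_k,\vv_l\in f$, and then it contributes $2\det(\nabla_f\lambda_k,\nabla_f\lambda_l)$, the $2\times2$ determinant computed in an orthonormal frame of $f$ oriented consistently with $\vn_f$ — equivalently, the frame in which the vertices of $f$ appear counterclockwise in the order of $\partial f$.

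Next I would discard the terms that cannot survive on $f$. Every term of $\tW_{ij}$ is $W_{ij}$, $W_{ik}$, or $W_{jk}$. If $\vv_i\in f$ and $e_{ik}\in\cE_I$, then $\vv_k\notin f$: otherwise $e_{ik}$ would join two vertices of the planar face $f$ and hence, by convexity of $T$, lie in $f$, contradicting $e_{ik}\in\cE_I$. Thus the interior component of $\tW_{ij}$ never contributes on $f$. The same convexity argument, combined with the fact that two distinct faces of $T$ meet in at most one edge, shows that a surface term $W_{ik}$ with $e_{ik}\in\cE_F$ (respectively $W_{jk}$ with $e_{jk}\in\cE_F$) survives only when $f$ is a parallelogram, $\vv_i$ (respectively $\vv_j$) is a vertex of $f$, and $\vv_k$ is the vertex of $f$ opposite it along a diagonal of $f$; in particular no surviving term comes from a vertex lying only on the second face of $\cF_{ij}$.

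Then I would split into cases according to how many of $\vv_i,\vv_j$ lie on $f$. If neither does, no term survives, so $\curl\tW_{ij}\cdot\vn_f|_f=0$; and $e_{ij}\cap f=\emptyset$ by convexity, so $e_{ij}\notin\pm\partial f$. If exactly one does, say $\vv_i\in f$ and $\vv_j\notin f$ (the other case reducing to this via $\tW_{ij}=-\tW_{ji}$), then $W_{ij}$ and all $W_{jk}$ drop because $\vv_j\notin f$, while the previous paragraph shows no $W_{ik}$ survives; again $\curl\tW_{ij}\cdot\vn_f|_f=0$ and $e_{ij}\notin\pm\partial f$. If both lie on $f$, convexity forces $e_{ij}$ to be an edge of $f$, whence $f\in\cF_{ij}$ and $e_{ij}\in\pm\partial f$; replacing $(i,j)$ by $(j,i)$ if necessary, I may assume $e_{ij}\in\partial f$. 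When $f$ is a triangle, the only surviving term is $W_{ij}$, the pair $(\vv_i,\vv_j)$ is counterclockwise-consecutive on $f$, and Lemma~\ref{lem:triangleCross} gives $\curl\tW_{ij}\cdot\vn_f|_f=2\det(\nabla_f\lambda_i,\nabla_f\lambda_j)=1/|f|$. When $f$ is a parallelogram with vertices $\vv_i,\vv_j,\vv_a,\vv_b$ in the order of $\partial f$, the surviving terms are $W_{ij}$ with coefficient $1$ and the diagonals $W_{ia}$ and $W_{bj}$ with coefficient $\tfrac12$ each, so
\[
\curl\tW_{ij}\cdot\vn_f|_f
= 2\det(\nabla_f\lambda_i,\nabla_f\lambda_j)+\det(\nabla_f\lambda_i,\nabla_f\lambda_a)+\det(\nabla_f\lambda_b,\nabla_f\lambda_j).
\]
Using the partition-of-unity relation $\nabla_f\lambda_i+\nabla_f\lambda_j+\nabla_f\lambda_a+\nabla_f\lambda_b=0$ on $f$ (from (\ref{eq:GBCLinearPrecision})) to remove $\nabla_f\lambda_a$ and $\nabla_f\lambda_j$ from the last two determinants, the right-hand side telescopes to $\det(\nabla_f\lambda_i,\nabla_f\lambda_j)+\det(\nabla_f\lambda_a,\nabla_f\lambda_b)$, which equals $1/|f|$ by Lemma~\ref{lem:parallelogramCross} and Assumption~2. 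The orientation $e_{ij}\in-\partial f$ yields $-1/|f|$ by the same computation applied to $\tW_{ji}$.

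I expect the main obstacle to be the bookkeeping of the two middle paragraphs: determining exactly which $W_{kl}$ in $\tW_{ij}$ trace nontrivially to a given face $f$, in particular excluding spurious contributions from vertices of the second face of $\cF_{ij}$ and correctly recognizing the diagonals of a parallelogramal face as elements of $\cE_F$, and then matching the counterclockwise convention of Lemmas~\ref{lem:triangleCross}--\ref{lem:parallelogramCross} with the right-hand orientation of $\partial f$. Once the surviving terms and their coefficients are pinned down, only the short telescoping identity above remains, which also shows that $\curl\tW_{ij}\cdot\vn_f|_f$ is constant on $f$ even though the individual determinants $\det(\nabla_f\lambda_k,\nabla_f\lambda_l)$ need not be.
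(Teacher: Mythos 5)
Your proposal is correct and follows essentially the same route as the paper: restrict to the tangential gradients $\nabla_f\lambda_k$ on $f$, observe that only Whitney forms with both vertices on $f$ contribute, and invoke Lemmas~\ref{lem:triangleCross}--\ref{lem:parallelogramCross} for the two face types. Your bookkeeping of which terms of $\tW_{ij}$ survive on a given face is in fact more explicit than the paper's (which passes over this with a single ``consequently''), and your partition-of-unity telescoping in the parallelogram case is an equivalent variant of the paper's regrouping of $W_{ij}+\tfrac12 W_{ik}+\tfrac12 W_{lj}$ into vanishing complete sums plus the diagonal pair handled by Lemma~\ref{lem:parallelogramCross}.
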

\begin{proof}
Notice that for any $f\in \cF$ and $1\le i,j\le n$, by Equation
(\ref{eq:curlWij}), one has
$$
\curl W_{ij} \cdot \vn_f|_f = 2(\nabla\lambda_i \times
\nabla\lambda_j)\cdot \vn_f|_f = 2(\nabla_f\lambda_i \times
\nabla_f\lambda_j)\cdot \vn_f|_f,
$$
where $\nabla_f \lambda_i|_f$ denotes the tangential component of
$\nabla \lambda_i$ on $f$. By Assumption 1, $\nabla_f \lambda_i|_f$
is non-zero only if $\vv_i$ is a vertex on face $f$. It is then
clear that $\curl W_{ij} \cdot \vn_f|_f$ is non-zero only when both
$\vv_i$ and $\vv_j$ are vertices of face $f$. Consequently, $\curl
\tW_{ij} \cdot \vn_f|_f$ is non-zero only when $e_{ij}\in\partial f$
or $-\partial f$.

For $f\in\cF$, denote by $\cV(f)$ the set of vertices on face $f$.
Without loss of generality, assume $f$ lies on the $xy$-plane with
outward normal $\vn_f=[0,0,1]^t$, and denote by $\lambda_k^{(2)}$,
for all $\vv_k\in\cV(f)$, the 2-dimensional barycentric coordinates
on polygon $f$. By Assumption 1, the 3D coordinate $\lambda_k$,
where $\vv_k\in\cV(f)$, degenerates to $\lambda^{(2)}_k$ on $f$.
Consequently, $\nabla_f \lambda_k$
is equal to $\begin{bmatrix}\nabla^{(2)}\lambda^{(2)}_k\\
0\end{bmatrix}$, where $\nabla^{(2)}$ stands for the 2-dimensional
gradient on the $xy$-plane. Note we have for all $e_{kl}\in\partial
f$ that
$$
(\nabla_f\lambda_k \times \nabla_f\lambda_l)\cdot \vn_f|_f =
\bigg( \begin{bmatrix}\nabla^{(2)}\lambda^{(2)}_k\\
0\end{bmatrix} \times \begin{bmatrix}\nabla^{(2)}\lambda^{(2)}_l\\
0\end{bmatrix} \bigg) \cdot \begin{bmatrix}0\\0\\1\end{bmatrix} =
det [\nabla^{(2)}\lambda^{(2)}_k\; \nabla^{(2)}\lambda^{(2)}_l].
$$

Consider the case $e_{ij}\in \partial f$. When $f$ is a triangle, it
is clear that by Lemma \ref{lem:triangleCross}
$$
\curl \tW_{ij} \cdot \vn_f|_f = \curl W_{ij} \cdot \vn_f|_f =
2(\nabla_f\lambda_i \times \nabla_f\lambda_j)\cdot \vn_f|_f =
\frac{1}{|f|}.
$$
When $f$ is a parallelogram, denote by $\vv_{i}$, $\vv_{j}$
$\vv_{k}$ and $\vv_l$ the vertices on $f$ such that $\partial f =
\{e_{ij},e_{jk},e_{kl},e_{li}\}$. Then by the definition of
$\tW_{ij}$ and Lemma \ref{lem:parallelogramCross},
$$
\begin{aligned}
(\curl \,& \tW_{ij}) \cdot \vn_f|_{f} = \curl (W_{ij} + \frac{1}{2}
W_{ik} + \frac{1}{2} W_{lj} )\cdot \vn_f|_{f} \\
&= \curl \left(\frac{1}{2}(W_{ii} + W_{ij} + W_{ik}+W_{il})  +
\frac{1}{2}
(W_{ij} +W_{jj}+W_{kj}+W_{lj} )\right) \cdot \vn_f|_{f} \\
&\qquad - \frac{1}{2} \curl (W_{il} + W_{kj}) \cdot \vn_f|_{f} \\
&= (\nabla_{f} \lambda_i \times \sum_{s\in\{i,j,k,l\}}\nabla_{f}
\lambda_s)\cdot \vn_f|_{f} +  \sum_{s\in\{i,j,k,l\}}\nabla_{f}
\lambda_s \times \nabla_{f}
\lambda_j)\cdot \vn_f|_{f}    \\
&\qquad +  ( \nabla_{f} \lambda_l \times \nabla_{f} \lambda_i +
 \nabla_{f} \lambda_j \times \nabla_{f} \lambda_k )\cdot \vn_f|_{f}  \\
 &=0+0 + \frac{1}{|f|}.
\end{aligned}
$$
In the above we have used $W_{ii}=W_{jj}=0$ and
$\sum_{s\in\{i,j,k,l\}} \nabla^{(2)} \lambda_s^{(2)} =\nabla^{(2)} 1
= \mathbf{0}$.

 For $e_{ij}\in -\partial f$, one just needs to change the sign.
This completes the proof of the lemma.
\end{proof}

Finally, on each $f\in\cF$, define
$$
\tW_f = \sum_{e_{ij}\in\partial f} \tW_{ij}.
$$
By the definition and Lemma \ref{lem:curltw}, we clearly have
\begin{equation} \label{eq:curltWf1}
(\curl \, \tW_f) \cdot \vn_f|_f =
\begin{cases}\frac{3}{|f|} \quad &\textrm{if }f\textrm{ is a triangle}, \\ \frac{4}{|f|} \quad &\textrm{if }f\textrm{ is a parallelogram}.\end{cases}
\end{equation}
Moreover, let $f'\in\cF$ be another face of $T$ that is different
from $f$, then
\begin{equation} \label{eq:curltWf2}
(\curl \, \tW_f) \cdot \vn_{f'}|_{f'} = \begin{cases}-\frac{1}{|f'|}
\quad &\textrm{if }f,f'\textrm{ share an edge}, \\ 0 \quad
&\textrm{if }f,f'\textrm{ do not share edge}.
\end{cases}
\end{equation}

%%%%%%%%%%%%%%%%%%%%%%%%%%%%%%%%%%%%%%%%%%%%%%%%%%%%%%%%%%%%%%%%%
\subsection{Discrete space and Basis function}
Now we are able to construct spaces $\cM\Lambda^1(T)$ and
$\cM\Lambda^2(T)$. It is very tempting to use $\tW_{ij}$, for all
$e_{ij}\in\cE^+$ as a set of basis for the $H(\curl)$ finite element
space $\cM\Lambda^1(T)$. However, the biggest problem of doing so is
that, we are not sure whether $\nabla \lambda_i\in
span\{\tW_{ij},\textrm{ for }e_{ij}\in \cE^+\}$ or not, and thus can
not ensure the $\nabla \cM\Lambda^0(T)\subset \cM\Lambda^1(T)$ part
in the sequence (\ref{eq:minimalComplex3D}).

To ensure the exactness of sequence (\ref{eq:minimalComplex3D}),
similar to the 2D case, we will try
$$
\begin{aligned}
\cM\Lambda^1(T) &= \nabla \cM\Lambda^0(T) \oplus \cH = span\{\nabla
\lambda_i,\,i=1,\ldots,n\} \oplus \cH, \\
\cM\Lambda^2(T) &= \curl \,\cH \oplus (\div^{\dagger}) \bbR,
\end{aligned}
$$
where $\cH$ is a space orthogonal to $span\{\nabla
\lambda_i,\,i=1,\ldots,n\}$. Again, in practice, it is very hard to
construct orthogonal basis. Thus we relax the orthogonality
requirement a little bit and replace $\oplus$ by $+$. Similar to
(\ref{eq:HdivSpace2D}), we construct the following:
\begin{align}
\cM\Lambda^1(T) &= span\{\nabla \lambda_i,\,i=1,\ldots,n\} +
span\{\tW_f,\, f\in\cF\}, \label{eq:HcurlSpace3D} \\
\cM\Lambda^2(T) &= \curl\, span\{\tW_f,\, f\in\cF\} + span\{\vx -
\vx_*\} \label{eq:HdivSpace3D} \\ &= span\{\curl\, \tW_f,\,
f\in\cF\} + span\{\vx - \vx_*\}, \nonumber
\end{align}
where $\vx_*$ is a chosen point inside $T$. Of course this is just
the construction. We still need to show that
(\ref{eq:minimalComplex3D}) is exact under this construction.

By definition, we have $\bbR\in \nabla\cM\Lambda^0(T)$,
$\nabla\cM\Lambda^0(T)\subset \cM\Lambda^1(T)$, $\curl
\cM\Lambda^1(T) \subset \cM\Lambda^2(T)$ and $\div \cM\Lambda^2(T) =
\bbR$. Moreover, it is clear that $\curl \cM\Lambda^1(T)\cap
span\{\vx - \vx_*\} = \{\mathbf{0}\}$. These establish the exactness
at the $\cM\Lambda^0(T)$ and the $\cM\Lambda^2(T)$ nodes. To show
that (\ref{eq:minimalComplex3D}) is exact at the $\cM\Lambda^1(T)$
node, we only need to prove that no none-zero vector in
$span\{\tW_f,\, f\in\cF\}$ is curl free. This can indeed be done by
counting dimensions, i.e., we will prove that the dimensions of
$\cM\Lambda^1(T)$ and $\cM\Lambda^2(T)$ are exactly $\#E$ and $\#F$,
as indicated in (\ref{eq:minimalComplex3D}). These dimensions are
computed by explicitly constructing basis functions, as shown in the
following two lemmas. We postpone the proof of these two lemmas to
Appendix \ref{appendix}.

\begin{lem} \label{lem:Hdivbasis3D}
There exists a computable basis $\{\vq_f, \textrm{ for } f\in\cF\}$
for $\cM\Lambda^2(T)$ defined in (\ref{eq:HdivSpace3D}), such that
on each $f'\in\cF$,
$$
\vq_f\cdot\vn_{f'}|_{f'} = \begin{cases} 1\quad &\textrm{if }f=f', \\
0&\textrm{otherwise}.
\end{cases}
$$
Therefore the dimension of $\cM\Lambda^2(T)$ is equal to the number
of faces of $T$.
\end{lem}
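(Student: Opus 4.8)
The plan is to take the (constant) normal components on the faces of $T$ as degrees of freedom and to prove that they are unisolvent for $\cM\Lambda^2(T)$; the basis $\{\vq_f\}$ is then the associated dual basis. Concretely, define the linear map $L\colon\cM\Lambda^2(T)\to\bbR^{\#F}$ by $L(\vq)=(\vq\cdot\vn_{f'}|_{f'})_{f'\in\cF}$. Every component is genuinely a constant: for $\vq=\vx-\vx_*$ because $(\vx-\vx_*)\cdot\vn_{f'}$ is the constant signed distance $d_{f'}$ from $\vx_*$ to the plane of $f'$, and for $\vq=\curl\tW_f$ by (\ref{eq:curltWf1})--(\ref{eq:curltWf2}). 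I would show that $L$ is an isomorphism; then $\vq_f:=L^{-1}(e_f)$, where $e_f$ is the $f$-th standard unit vector of $\bbR^{\#F}$, satisfies $\vq_f\cdot\vn_{f'}|_{f'}=\delta_{ff'}$, and since the images $\{L(\vq_f)\}_{f\in\cF}=\{e_f\}_{f\in\cF}$ are linearly independent, $\{\vq_f\}_{f\in\cF}$ is a basis of $\cM\Lambda^2(T)$ — in particular $\dim\cM\Lambda^2(T)=\#F$.

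Since $\cM\Lambda^2(T)$ is spanned by the $\#F+1$ functions $\{\curl\tW_f:f\in\cF\}\cup\{\vx-\vx_*\}$, it suffices to establish that these functions admit \emph{exactly} a one-dimensional space of linear relations (which forces $\dim\cM\Lambda^2(T)=\#F$) and that $L$ is onto. For the first point, note the exact identity $\sum_{f\in\cF}\tW_f=0$: in $\sum_f\sum_{e_{ij}\in\partial f}\tW_{ij}$ each edge of $T$ occurs on the boundary of exactly two faces with opposite orientations, contributing $\tW_{ij}+\tW_{ji}=0$; hence $\sum_f\curl\tW_f=0$ as well. To see that this relation is essentially the only one, I pass to $L$. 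Write $M_{ff'}=(\curl\tW_f)\cdot\vn_{f'}|_{f'}$; by (\ref{eq:curltWf1})--(\ref{eq:curltWf2}) one has $|f'|\,M_{ff'}=c_f\,\delta_{ff'}-A_{ff'}$, where $c_f\in\{3,4\}$ is the number of edges of $f$ and $A$ is the adjacency matrix of the face-adjacency graph of $T$ (two faces being adjacent when they share an edge). A triangular face is adjacent to $3$ faces and a parallelogram face to $4$, so $c_f$ is exactly the degree of $f$; therefore $(|f'|\,M_{ff'})_{f,f'}$ is precisely the graph Laplacian $\mathcal{L}$ of this graph, which is symmetric and positive semidefinite and, the face-adjacency graph being connected, has kernel spanned by $\mathbf{1}=(1,\dots,1)$. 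Thus $M=\mathcal{L}\,D^{-1}$ with $D=\mathrm{diag}(|f|)$, so $\mathrm{rank}(M)=\#F-1$ and $\mathrm{Image}(M^{t})=\{y\in\bbR^{\#F}:\sum_f|f|\,y_f=0\}$.

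The decisive step — which I expect to be the main obstacle — is that the extra data vector $v:=L(\vx-\vx_*)=(d_{f'})_{f'}$ does not lie in $\mathrm{Image}(M^{t})$: by the divergence theorem, $\sum_{f'}|f'|\,d_{f'}=\int_{\partial T}(\vx-\vx_*)\cdot\vn\,ds=\int_T\div(\vx-\vx_*)\,dx=3|T|\neq0$. Consequently the map $N\colon\bbR^{\#F+1}\to\bbR^{\#F}$, $N\big((a_f)_f,b\big)=\sum_f a_f\,L(\curl\tW_f)+b\,v=M^{t}a+b\,v$, whose image is $L(\cM\Lambda^2(T))$, is surjective (hence $L$ is onto), and $\ker N$ is one-dimensional: $M^{t}a=-b\,v$ forces $b=0$ and then $a\in\ker M^{t}=\mathrm{span}(\mathbf{1})$, so $\ker N=\mathrm{span}\big((1,\dots,1,0)\big)$. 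Because $\sum_f\curl\tW_f=0$, the coefficient vector $(1,\dots,1,0)$ annihilates the spanning functions themselves and not merely their $L$-images, so the relation space coincides with $\ker N$ and is one-dimensional; hence $\dim\cM\Lambda^2(T)=\#F$, $L$ is a surjection between spaces of equal dimension, hence an isomorphism, and $\vq_f=L^{-1}(e_f)$ is the desired basis. Computability is then immediate: solve the consistent linear system $N\big((a_f)_f,b\big)=e_f$ — for instance, fixing $a_{f_0}=0$ for one chosen face $f_0$ makes it a square, solvable system — and set $\vq_f=\sum_{f''}a_{f''}\curl\tW_{f''}+b(\vx-\vx_*)$, all the $\curl\tW_{f''}$ being explicitly available.
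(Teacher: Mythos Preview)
Your proof is correct and is essentially the same as the paper's. Both arguments hinge on recognizing that the normal components of $\curl\tW_f$ assemble into (a diagonally scaled copy of) the face-adjacency graph Laplacian with one-dimensional kernel $\mathrm{span}\{\mathbf{1}\}$, on the identity $\sum_f\tW_f=0$ furnishing the single relation among the generators, and on the volume computation $\sum_{f'}|f'|\,d_{f'}=3|T|$ to show that the extra generator $\vx-\vx_*$ supplies the missing direction; the paper solves the resulting consistent system $M^F c = b$ directly after first reading off $c_{f,0}=|f|/(3|T|)$, whereas you package the same linear algebra through the degrees-of-freedom map $L$ and its inverse, but the content is identical.
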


\begin{lem} \label{lem:Hcurlbasis3D}
There exists a computable basis $\{\vp_{e}, \textrm{ for }
e\in\cE^+\}$ for $\cM\Lambda^1(T)$ defined in
(\ref{eq:HcurlSpace3D}), such that on each $e'\in \cE^+$,
$$
\vp_{e} \cdot \vt_{e'}|_{e'} = \begin{cases} 1\quad &\textrm{if }e=e', \\
0&\textrm{otherwise}.
\end{cases}
$$
Therefore the dimension of $\cM\Lambda^1(T)$ is equal to the number
of edges of $T$.
\end{lem}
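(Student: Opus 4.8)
The plan is to realize $\{\vp_e\}$ as the basis of $\cM\Lambda^1(T)$ dual to the tangential-trace degrees of freedom on $\cE^+$. Let $\Psi\colon\bbR^{\#V}\times\bbR^{\#F}\to\cM\Lambda^1(T)$ be the (surjective, by definition (\ref{eq:HcurlSpace3D})) parametrization $\Psi(a,b)=\sum_{i=1}^{\#V}a_i\nabla\lambda_i+\sum_{f\in\cF}b_f\tW_f$. The first step is the pointwise identity on a natural edge $e_{kl}\in\cE^+$: by Assumption~1 each $\lambda_i$ is linear (possibly $\equiv 0$) on $e_{kl}$, so $\nabla\lambda_i\cdot\vt_{kl}|_{e_{kl}}=(\lambda_i(\vv_l)-\lambda_i(\vv_k))/|e_{kl}|=(\delta_{il}-\delta_{ik})/|e_{kl}|$; and by Lemma~\ref{lem:twijOnEdges} together with $\tW_{ij}=-\tW_{ji}$ and $\tW_f=\sum_{e_{ij}\in\partial f}\tW_{ij}$, the restriction $\tW_f\cdot\vt_{kl}|_{e_{kl}}$ equals $1/|e_{kl}|$ when $e_{kl}\in\partial f$, equals $-1/|e_{kl}|$ when $e_{lk}\in\partial f$, and vanishes otherwise. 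Since $e_{kl}$ lies on exactly two faces, $f_1$ with $e_{kl}\in\partial f_1$ and $f_2$ with $e_{lk}\in\partial f_2$, this gives
$$
\Psi(a,b)\cdot\vt_{kl}|_{e_{kl}}=\frac{(a_l-a_k)+(b_{f_1}-b_{f_2})}{|e_{kl}|}.
$$
In particular $\vu\cdot\vt_e|_e$ is a well-defined constant for every $\vu\in\cM\Lambda^1(T)$ and $e\in\cE^+$, so we may set $\Phi\colon\cM\Lambda^1(T)\to\bbR^{\cE^+}$, $\Phi(\vu)=(\vu\cdot\vt_e|_e)_{e\in\cE^+}$, and $L=\Phi\circ\Psi$, whose $e_{kl}$-component is $((a_l-a_k)+(b_{f_1}-b_{f_2}))/|e_{kl}|$.

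The heart of the proof is to compute $\ker L$, which I claim equals the obvious two-dimensional ``gauge'' subspace $G=\{(c\,\mathbf{1},\,c'\,\mathbf{1}):c,c'\in\bbR\}$. First, $G\subseteq\ker\Psi$: indeed $\sum_i\nabla\lambda_i=\mathbf{0}$ because $\sum_i\lambda_i\equiv1$, and $\sum_{f\in\cF}\tW_f=\mathbf{0}$ because every natural edge of $T$ occurs in exactly two faces inducing opposite orientations on it, so the $\tW_{ij}$ cancel in pairs. Hence $G\subseteq\ker\Psi\subseteq\ker L$. For the reverse inclusion, suppose $L(a,b)=\mathbf{0}$. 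For a fixed face $f$, the relation at each directed boundary edge reads $(a_l-a_k)+(b_f-b_{f'})=0$, where $f'$ is the face sharing that edge; summing around the oriented boundary cycle of $f$, the $a$-increments telescope to zero and we are left with $m_f\,b_f=\sum_{f'\sim f}b_{f'}$, where $m_f$ is the number of edges of $f$. Thus $b$ satisfies the discrete mean-value property on the face-adjacency graph of $T$, which is connected, so $b$ is constant by the discrete maximum principle; substituting back gives $a_l=a_k$ along every edge of the (connected) edge graph, so $a$ is constant too. Therefore $\ker L=G$, and consequently $\ker\Psi=\ker L=G$.

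The conclusion is then immediate. Since $\Psi$ is surjective with two-dimensional kernel, $\dim\cM\Lambda^1(T)=\#V+\#F-2=\#E$ by Euler's formula, which also shows (as the excerpt anticipates) that no nonzero element of $span\{\tW_f,\,f\in\cF\}$ is curl-free and hence that (\ref{eq:minimalComplex3D}) is exact at the $\cM\Lambda^1(T)$ node. Moreover $\ker\Phi=\Psi(\ker L)=\Psi(G)=\{\mathbf{0}\}$, so $\Phi$ is an injection between spaces of equal dimension $\#E$, hence an isomorphism. The dual basis $\vp_e:=\Phi^{-1}(\mathbf{e}_e)$, $e\in\cE^+$, is therefore a basis of $\cM\Lambda^1(T)$ with $\vp_e\cdot\vt_{e'}|_{e'}=\delta_{e,e'}$; it is computable because $\vp_e=\Psi(a,b)$ for any solution $(a,b)$ of the explicit linear system $(a_{l'}-a_{k'})+(b_{f_1'}-b_{f_2'})=|e_{k'l'}|\,\delta_{e_{k'l'},e}$ over $e_{k'l'}\in\cE^+$ (two solutions differ by an element of $G$ and give the same function). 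The one genuinely nontrivial step is $\ker L\subseteq G$ — that $L$ carries no null direction beyond the two evident gauge freedoms — and this is precisely where the connectedness of the polyhedron and the discrete maximum principle are used; everything else is bookkeeping built on Lemma~\ref{lem:twijOnEdges} and Assumption~1. Note that this argument does not rely on Lemma~\ref{lem:Hdivbasis3D}, so the two basis constructions can be carried out independently.
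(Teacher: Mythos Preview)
Your argument is correct and follows the same overall architecture as the paper's proof in Appendix~B: parametrize $\cM\Lambda^1(T)$ by $(a,b)\in\bbR^{\#V}\times\bbR^{\#F}$, compute the edge-tangential traces to obtain a linear map whose matrix is (up to edge-length scaling) $A=[A^{VtoE}\;A^{FtoE}]$, identify the kernel as the two-dimensional gauge subspace $G$, and then count dimensions via Euler's formula. The genuine difference is in how the kernel is pinned down. The paper uses linear algebra from Appendix~A: the orthogonality $(A^{FtoE})^t A^{VtoE}=\mathbf{0}$ block-diagonalizes $A^tA$ into $M^V\oplus M^F$, and each block is handled by an irreducible weak-diagonal-dominance argument (Lemma~A.1). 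You instead decouple $b$ from $a$ directly by summing the edge relations around the oriented boundary of each face, which telescopes away the $a$-increments and yields the discrete mean-value equation $m_f b_f=\sum_{f'\sim f}b_{f'}$; connectedness of the face-adjacency graph then forces $b$ constant, after which $a$ is constant by connectedness of the $1$-skeleton. Your route is more elementary in that it avoids the auxiliary matrix lemmas of Appendix~A and makes the role of the two connectedness properties transparent; the paper's route has the advantage of packaging everything in reusable adjacency-matrix language (the same $M^F$ is used verbatim in the proof of Lemma~\ref{lem:Hdivbasis3D}). Either way the substantive content is the same: the only null directions of the trace map are the two obvious gauge freedoms $\sum_i\nabla\lambda_i=\mathbf{0}$ and $\sum_f\tW_f=\mathbf{0}$.
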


\begin{rem}
By the definitions of $\cM\Lambda^1(T)$ and $\cM\Lambda^2(T)$,
lemmas \ref{lem:Hdivbasis3D}-\ref{lem:Hcurlbasis3D}, and by counting
the dimensions, we know that (\ref{eq:minimalComplex3D}) is an exact
sequence.
\end{rem}

\begin{rem}
Lemma \ref{lem:Hdivbasis3D} indicates that for all $\vq\in
\cM\Lambda^2(T)$, $\vq\cdot\vn$ is piecewise constant on the surface
of $T$. Moreover, the normal components on faces of $T$ form a
unisolvent set of degrees of freedom for $\cM\Lambda^2(T)$, which
allows one to build $H(\div)$ conforming finite element space using
$\cM\Lambda^2(T)$.
\end{rem}

\begin{rem}
Similarly, Lemma \ref{lem:Hcurlbasis3D} indicates that for all
$\vq\in \cM\Lambda^1(T)$, $\vq\cdot\vt$ is piecewise constant on the
skeleton of $T$, i.e., the collection of all edges in $\cE$.
Moreover, the tangential components on edges of $T$ form a
unisolvent set of degrees of freedom for $\cM\Lambda^1(T)$. However,
this is not enough for building $H(\curl)$ conforming finite element
space, as $H(\curl)$ conforming requires the tangential components
on all faces, not only on edges, to be continuous across elements.
\end{rem}

Next, we show that the basis $\vp_e$ also provides tangential
continuity across faces. For each $\vp\in \cM\Lambda^1(T)$, its
value on a face $f\in\cF$ can be split into two orthogonal parts
$$
\vp|_f = \cT_f(\vp) + \cN_f(\vp),
$$
where $\cT_f(\vp)$ and $\cN_f(\vp)$ are the vector projections of
$\vp|_f$ onto $f$ and its normal direction, respectively. We also
denote by $\cT_{\partial T}(\vp)$ the patching of $\cT_f(\vp)$ over
all $f\in\cF$.

By the definition of $\cM\Lambda^1(T)$ and $\tW_f$, it is clear that
$$
\cM\Lambda^1(T) \subseteq span\{\nabla \lambda_i,\,i=1,\ldots,n\} +
span\{\tW_{ij},\, e_{ij}\in\cE\}.
$$
But in general, we do not know whether $\nabla\lambda_i =
-\sum_{j=1}^n W_{ij}$ is in $span\{\tW_{ij},\, e_{ij}\in\cE\}$ or
not. However, if only considering the tangential component, one has
the following nice property:

\begin{lem} \label{lem:tangentialComponent}
Let $e_{ij}\in \cE^+$ and $\vp_{e_{ij}}$ be the basis function of
$\cM\Lambda^1(T)$ associated with edge $e_{ij}$. Then
$$
\cT_{\partial T} (\vp_{e_{ij}}) = \cT_{\partial T}
(|e_{ij}|\tW_{ij}).
$$
\end{lem}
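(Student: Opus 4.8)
The plan is to prove the equality face by face. Fix a face $f\in\cF$; it suffices to show that $\cT_f(\vp_{e_{ij}})=\cT_f(|e_{ij}|\tW_{ij})$ as tangential vector fields on $f$, and then let $f$ vary. Put $\vw:=\cT_f(\vp_{e_{ij}})-\cT_f(|e_{ij}|\tW_{ij})$. Two facts drive the argument. First, $\vw$ has vanishing tangential component along every edge of $f$: on any $e\in\cE$ lying in $\partial f$ one has $\vp_{e_{ij}}\cdot\vt_{e}|_{e}=|e_{ij}|\,\tW_{ij}\cdot\vt_{e}|_{e}$ (both equal $\pm1$ or $0$ according to whether $e$ is the edge $\{i,j\}$), by Lemma~\ref{lem:Hcurlbasis3D} and Lemma~\ref{lem:twijOnEdges}. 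Second, I claim $\vw$ lies in the small space
$$
V_f:=span\{\nabla_f\lambda_k^{(2)}:\vv_k\in\cV(f)\}+span\{\sigma_e^f:e\in\partial f\},\qquad \sigma_e^f:=\cT_f(\tW_e),
$$
where $\lambda_k^{(2)}$ is the trace of $\lambda_k$ on $f$ (a $2$D generalized barycentric coordinate, by Assumption~1). The heart of the proof is then to show that an element of $V_f$ whose tangential components on all edges of $f$ vanish must be the zero field.

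To see $\vw\in V_f$, use the definition (\ref{eq:HcurlSpace3D}) of $\cM\Lambda^1(T)$ together with $\tW_{f'}=\sum_{e\in\partial f'}\tW_e$ and $\cT_f(\nabla\lambda_k)=\nabla_f\lambda_k^{(2)}$ (or $0$ when $\vv_k\notin\cV(f)$), which comes from the trace property; this reduces the claim to knowing that $\cT_f(\tW_e)=0$ whenever $e\in\cE$ is \emph{not} an edge of $f$. This is elementary from the definition of $\tW_e$: its interior component has zero tangential trace on any face, and its surface component can contribute, after $\cT_f$, only a term proportional to $\cT_f(W_{pq})$ with $\vv_p,\vv_q\in\cV(f)$ and $\overline{\vv_p\vv_q}$ a diagonal of $f$; but then $\vv_q$ would have to lie on a face adjacent to $e$, and since two distinct $2$-faces of the convex polyhedron $T$ meet in a common face of dimension at most $1$, the segment $\overline{\vv_p\vv_q}$ would be an edge of $T$, contradicting that it is a diagonal of $f$. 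The same bookkeeping shows that for $e=e_{pq}\in\partial f$,
$$
\sigma_{pq}^f=\cT_f(\tW_{pq})=\begin{cases}\cT_f(W_{pq}),&f\text{ a triangle},\\[1mm]\cT_f(W_{pq})+\tfrac12\cT_f(W_{p p'})-\tfrac12\cT_f(W_{q q'}),&f\text{ a parallelogram},\end{cases}
$$
with $\vv_{p'},\vv_{q'}$ the vertices of $f$ opposite $\vv_p,\vv_q$ (exactly the surface component in the proof of Lemma~\ref{lem:curltw}); and $\cT_f(|e_{ij}|\tW_{ij})$ equals $|e_{ij}|\sigma_{ij}^f$ if $e_{ij}\in\partial f$ and $0$ otherwise. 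Hence $\vw\in V_f$.

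The key algebraic fact is that, for all scalars $\{\alpha_k\}_{\vv_k\in\cV(f)}$,
$$
\sum_{\vv_k\in\cV(f)}\alpha_k\,\nabla_f\lambda_k^{(2)}+\sum_{e_{pq}\in\partial f}(\alpha_p-\alpha_q)\,\sigma_{pq}^f=\mathbf0.
$$
Granting this, the lemma follows: since $\vw\in V_f$ write $\vw=\sum_k\alpha_k\nabla_f\lambda_k^{(2)}+\sum_{e_{pq}\in\partial f}\beta_{pq}\sigma_{pq}^f$; evaluating $\vw\cdot\vt$ on an edge $e_{pq}$ of $f$ and using the Lagrange and trace properties (so $\nabla_f\lambda_k^{(2)}\cdot\vt_{pq}|_{e_{pq}}=(\delta_{kq}-\delta_{kp})/|e_{pq}|$) together with Lemma~\ref{lem:twijOnEdges} (so $\sigma_{rs}^f\cdot\vt_{pq}|_{e_{pq}}$ is nonzero only for $\{r,s\}=\{p,q\}$) forces $\beta_{pq}=\alpha_p-\alpha_q$ for every $e_{pq}\in\partial f$; the displayed identity then yields $\vw=\mathbf0$, i.e.\ $\cT_f(\vp_{e_{ij}})=\cT_f(|e_{ij}|\tW_{ij})$ on $f$, and since $f$ was arbitrary this proves the lemma.

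The step I expect to be the main obstacle is establishing this algebraic identity, i.e.\ correctly accounting for the parallelogram corrections. For a triangular face $\sigma_{pq}^f=\cT_f(W_{pq})$ and the identity is immediate from the $2$D analogue of (\ref{eq:Wijsummation}), namely $\sum_{\vv_q\in\cV(f)}\cT_f(W_{pq})=-\nabla_f\lambda_p^{(2)}$ (and $\cT_f(W_{pp})=0$). For a parallelogram one substitutes the correction formula for $\sigma_{pq}^f$ into the sum, collects the coefficients of the two diagonal forms $\cT_f(W_{13})$ and $\cT_f(W_{24})$, and checks — using only $\cT_f(W_{pq})=-\cT_f(W_{qp})$ and the cyclic structure of $\partial f$ — that the diagonal contributions cancel identically, leaving exactly $-\sum_p\alpha_p\nabla_f\lambda_p^{(2)}$ as in the triangular case. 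A secondary, purely routine obstacle is the two bookkeeping claims of the second paragraph (the vanishing of $\cT_f(\tW_e)$ for $e$ not an edge of $f$, and the explicit form of $\sigma_{pq}^f$), which follow from the definition of $\tW_e$, the trace property, and the convexity fact about faces of $T$.
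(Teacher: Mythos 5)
Your proof is correct and follows essentially the same route as the paper's: both rest on the relation $\sum_j W_{ij}=-\nabla\lambda_i$ together with the cancellations built into the definition of $\tW_{ij}$ to place the tangential trace of everything in $\cM\Lambda^1(T)$ inside $span\{\cT_{\partial T}(\tW_{ij}),\ e_{ij}\in\cE^+\}$, and then invoke the edge-trace unisolvence of Lemma \ref{lem:twijOnEdges}. The only difference is organizational: the paper argues globally on $\partial T$ and states the key identity vertex by vertex, namely $\cT_{\partial T}(\nabla\lambda_i)=-\sum_{j,\,e_{ij}\in\cE}\cT_{\partial T}(\tW_{ij})$, whereas you localize to each face and verify the equivalent linear relation in $V_f$ explicitly, including the parallelogram bookkeeping and the vanishing of $\cT_f(\tW_e)$ for non-edges of $f$ that the paper leaves implicit.
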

\begin{proof}
Clearly, $\cT_f(W_{ij})$ is nonzero on $f$ only when both $\vv_i$
and $\vv_j$ lie on $f$. Note that Equation (\ref{eq:Wijsummation})
is still true in 3D. Therefore, one has
$$
\cT_{\partial T} (\nabla \lambda_i) = -\sum_{
\begin{matrix}j\textrm{ such that}
\\e_{ij}\in\cE\cup\cE_F \end{matrix}} \cT_{\partial T} (W_{ij}) = -
\sum_{ \begin{matrix}j\textrm{ such that}
\\e_{ij}\in\cE\end{matrix}} \cT_{\partial T} (\tW_{ij}).
$$
In the above we have used the definition of $\tW_{ij}$ to cancel out
terms on $e_{kl}\in\cE_F$ (if there exists any) that are not
connected to vertex $\vv_i$. Hence $ \cT_{\partial T} (\nabla
\lambda_i) \in span\{\cT_{\partial T}(\tW_{ij}),\, e_{ij}\in\cE\}$,
which together with the definitions of $\tW_f$ and
$\cM\Lambda^1(T)$, further implies that
$$
\begin{aligned}
\cT_{\partial T} (\cM\Lambda^1(T))
&\subseteq span\{\cT_{\partial T}(\tW_{ij}),\, e_{ij}\in\cE\} \\
&= span\{\cT_{\partial T}(\tW_{ij}),\, e_{ij}\in\cE^+\}.
\end{aligned}
$$
Therefore, by Lemma \ref{lem:twijOnEdges} and by comparing the
tangential components on each edge, one must have $ \cT_{\partial T}
(\vp_{e_{ij}}) = \cT_{\partial T}(|e_{ij}|\tW_{ij})$ for all
$e_{ij}\in\cE^+$. This completes the proof of the lemma.
\end{proof}

\begin{rem}
Lemma \ref{lem:tangentialComponent} tells us that the tangential
component of each basis function $\vp_{e_{ij}}$ on $\partial T$ is
completely determined by the tangential component of $\tW_{ij}$. Let
$T$ and $T'$ be two polyhedra sharing a face $f$, and let $e_{ij}$
be an edge of the polygon $f$. Then, by the definition of $\tW_{ij}$
and Assumption 1, we know that $\vp_{e_{ij}}$ has continuous
tangential component across the face $f$. Thus one can build
$H(\curl)$ conforming finite element spaces using $\cM\Lambda^1(T)$.
\end{rem}

\begin{rem} \label{rem:noInternalEdge}
If $\cE_I = \emptyset$, then similar to the proof of Lemma
\ref{lem:tangentialComponent}, one can show $\nabla\lambda_i \in
span\{\tW_{ij},\, e_{ij}\in\cE\}$ and consequently
$\vp_{e_{ij}}=|e_{ij}|\tW_{ij}$. Examples of polyhedra with $\cE_I =
\emptyset$ include tetrahedra, pyramids and triangular prisms, but
not rectangular boxes.
\end{rem}

%-------------------------------------------------------------------
Next, we briefly show that $\cM\Lambda^k(T)\subseteq
\mathcal{W}\Lambda^k(T)$ for $k=1,2$. By Equation
(\ref{eq:Wijsummation}) and the definition of $\tW_{f}$, one
immediately has $\cM\Lambda^1(T)\subseteq\mathcal{W}\Lambda^1(T)$.
Similarly, by Equation (\ref{eq:curlWij}) and the definition of
$\tW_{f}$, one gets $\curl \cM\Lambda^1(T) \subseteq
\mathcal{W}\Lambda^2(T)$. We also know from Equation
(\ref{eq:WLambdaTincludeP-LambdaT}) that $\vx-\vx_*
\in\mathcal{W}\Lambda^2(T)$. Combining the above with the definition
of $\cM\Lambda^2(T)$ gives $\cM\Lambda^2(T)\subseteq
\mathcal{W}\Lambda^2(T)$.

%-------------------------------------------------------------------
Finally, to ensure the approximation property of $\cM\Lambda^k(T)$,
for $k=1,2$, we would like to have $\cP_1^-\Lambda^k(T)\subseteq
\cM\Lambda^k(T)$. This is not easy to prove, and so far we do not
even know whether it is in general true or not. Fortunately, we are
able to prove this for two special types of polyhedra:
\begin{description}
\item[Type I] Polyhedra with $\cE_I=\emptyset$;
\item[Type II] Polyhedra with a center $\vx_c$ such that for
each vertex $\vv_i$, $1\le i\le n$, one has
\begin{equation} \label{eq:typeIIPolyhedron}
(\vx_c - \vv_i)\times \sum_{j,\,e_{ij}\in\cE} \vtau_{ij} = 0.
\end{equation}
This is equivalent to say the barycenter of the point set
$\{\vv_j,\textrm{ for all }e_{ij}\in\cE\}$ lies in the line passing
through $\vv_i$ and $\vx_c$.
\end{description}

The proof of the following Lemma will be given in Appendix
\ref{appendix2}.
\begin{lem} \label{lem:ContainingR}
On Type I and II polyhedra, one has $\cP_1^-\Lambda^k(T)\subseteq
\cM\Lambda^k(T)$ for $k=1,2$.
\end{lem}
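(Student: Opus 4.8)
The plan is to reduce the whole assertion to the single inclusion $\cP_1^-\Lambda^1(T)\subseteq\cM\Lambda^1(T)$. Granting this, the case $k=2$ is automatic: applying $\curl$ and using $\curl(\va\times\vx+\vb)=2\va$ shows that every constant vector field lies in $\curl\,\cM\Lambda^1(T)\subseteq\cM\Lambda^2(T)$, and since $\vx-\vx_*\in\cM\Lambda^2(T)$ by the definition (\ref{eq:HdivSpace3D}), we obtain $a\vx+\vc=a(\vx-\vx_*)+(\vc+a\vx_*)\in\cM\Lambda^2(T)$ for all $a\in\bbR$ and $\vc\in\bbR^3$, i.e.\ $\cP_1^-\Lambda^2(T)\subseteq\cM\Lambda^2(T)$. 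Moreover one half of the $k=1$ inclusion is elementary and holds on any polytope: for a constant vector $\vb$, the linear precision property (\ref{eq:GBCLinearPrecision}) gives $\vb\cdot\vx=\sum_{i=1}^n(\vb\cdot\vv_i)\lambda_i\in\cM\Lambda^0(T)$, hence $\vb=\nabla(\vb\cdot\vx)=\sum_{i=1}^n(\vb\cdot\vv_i)\nabla\lambda_i\in\nabla\cM\Lambda^0(T)\subseteq\cM\Lambda^1(T)$. So everything comes down to showing $\va\times\vx\in\cM\Lambda^1(T)$ for each $\va\in\bbR^3$, which I would treat separately in the two cases.

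On a Type~I polyhedron this is immediate. Since $\cE_I=\emptyset$, Remark \ref{rem:noInternalEdge} gives $\vp_{e_{ij}}=|e_{ij}|\tW_{ij}$, so $\cM\Lambda^1(T)=span\{\tW_{ij}:e_{ij}\in\cE^+\}$; and $\cP_1^-\Lambda^1(T)\subseteq span\{\tW_{ij}:e_{ij}\in\cE^+\}$ is exactly Lemma \ref{lem:tWcontainP1-}.

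For a Type~II polyhedron I would read $\va\times\vx$ off as a combination of the face forms $\tW_f$ through a discrete-cohomology description of $span\{\tW_f:f\in\cF\}$. By the constant-field step above it suffices to treat $\va\times(\vx-\vx_c)$, so we may place the origin at the center $\vx_c$; then the identity established in the proof of Lemma \ref{lem:tWcontainP1-} yields the explicit expansion $\va\times\vx=\sum_{e_{ij}\in\cE^+}\bigl(\va\cdot(\vv_i\times\vv_j)\bigr)\tW_{ij}$. By Lemma \ref{lem:twijOnEdges}, the map $(c_{ij})_{e_{ij}\in\cE^+}\mapsto\sum_{e_{ij}\in\cE^+}c_{ij}\tW_{ij}$ is a linear isomorphism of $\bbR^{\cE^+}$ onto $span\{\tW_{ij}:e_{ij}\in\cE^+\}$, and under it $\tW_f=\sum_{e_{ij}\in\partial f}\tW_{ij}$ corresponds to the signed incidence vector of $f$ in the dual graph $G^*$ of $T$ (vertices of $G^*$ = faces of $T$, edges of $G^*$ = edges of $T$). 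Hence $span\{\tW_f:f\in\cF\}$ is the image of the coboundary operator of $G^*$, so a coefficient vector lies in it precisely when it is orthogonal to the cycle space of $G^*$; and since $T$ is convex, $G^*$ is a planar graph whose cycle space is spanned by the boundary cycles of its faces, which are exactly the vertex stars $\{e_{pj}:e_{pj}\in\cE\}$ of $T$. It thus remains to check that for each vertex $\vv_p$ the signed sum of the coefficients $\va\cdot(\vv_i\times\vv_j)$ over the star of $\vv_p$ vanishes; that sum equals $\va\cdot\bigl(\vv_p\times\sum_{j,\,e_{pj}\in\cE}\vv_j\bigr)$, and writing $\sum_{j,\,e_{pj}\in\cE}\vv_j=\sum_{j,\,e_{pj}\in\cE}\vtau_{pj}+d_p\vv_p$ (with $d_p$ the number of edges of $T$ meeting $\vv_p$), the Type~II condition (\ref{eq:typeIIPolyhedron}) --- which with origin $\vx_c$ reads $\vv_p\times\sum_{j,\,e_{pj}\in\cE}\vtau_{pj}=0$ --- forces it to be zero. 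Therefore $\va\times(\vx-\vx_c)\in span\{\tW_f\}\subseteq\cM\Lambda^1(T)$, and the proof is complete.

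The main obstacle is the Type~II step, specifically the identification of $span\{\tW_f\}$ with the image of the dual coboundary operator together with the topological input that, for a convex polyhedron, the cycle space of the dual graph is generated by the vertex stars; one must also be careful to make the several orientation conventions consistent --- the fixed $\cE^+$ orientation of edges, the orientations induced on edges by $\partial f$, and the orientations of the vertex-star cycles --- so that the incidence/cycle bookkeeping matches the sign appearing in (\ref{eq:typeIIPolyhedron}). The remaining pieces --- the recentering, extracting the explicit expansion from the proof of Lemma \ref{lem:tWcontainP1-}, and the scalar-triple-product manipulation reducing the star-sum to (\ref{eq:typeIIPolyhedron}) --- are routine.
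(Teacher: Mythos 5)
Your proof is correct and follows essentially the same route as the paper's: after the identical reductions (Type I via Remark \ref{rem:noInternalEdge} and Lemma \ref{lem:tWcontainP1-}; constants via $\nabla\cM\Lambda^0(T)$; the $k=2$ case via $\curl(\va\times\vx)=2\va$ and $\vx-\vx_*$), the paper also seeks coefficients with $C_{f^{ij}_l}-C_{f^{ij}_r}=(\va\times(\vv_i-\vx_c))\cdot\vtau_{ij}$, recognizes the coefficient matrix as the face-to-edge incidence matrix $A^{FtoE}$ of the dual graph, and checks solvability by verifying that the right-hand side is orthogonal to the vertex-star vectors, which reduces to exactly your scalar-triple-product computation and the Type II condition (\ref{eq:typeIIPolyhedron}). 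The only divergence is at the point you flag as the main obstacle: where you invoke planar-graph duality to say the cycle space of the dual graph is spanned by the vertex stars, the paper proves the equivalent statement $Ker((A^{FtoE})^t)=range(A^{VtoE})$ self-containedly in Appendix \ref{sec:adjacencyMatrices} (lemmas \ref{lem:MFrank}--\ref{lem:adjacencyOrthognality}) by a rank count using Euler's formula and the diagonal dominance of $M^F$ and $M^V$.
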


\begin{rem}
Type I polyhedra include all tetrahedra, pyramids, and triangular
prisms. Type II polyhedra include all parallelepipeds, all regular
$n$-gon based bipyramids, the regular octahedron, the regular
icosahedron, and some Catalan solids.
\end{rem}

\begin{rem}
From Lemma \ref{lem:ContainingR}, we know that for Type I and II
polyhedra, the definition of $\cM\Lambda^2(T)$ is independent of the
choice of $\vx_*$, because $\bbR^3\subset \cP_1^-\Lambda^2(T)$. But
so far we do not know whether the definitions of $\cM\Lambda^1(T)$
and $\cM\Lambda^2(T)$ are independent of the linear combination
given  in Equation (\ref{eq:Cikij}) or not.
\end{rem}

\begin{rem} \label{rem:alternativeConstruction}
One may alternatively define an $H(\curl)$ conforming finite element
$$
\widetilde{\cM\Lambda^1}(T) = span\{\tW_{ij},\textrm{ for
}e_{ij}\in\cE^+\},
$$
which contains $\cP_1^-\Lambda^1(T)$ according to Lemma
\ref{lem:tWcontainP1-} for all polyhedra satisfying Assumptions 1-2
(not restricted to Type I and II polyhedra). Moreover, by Remark
\ref{rem:noInternalEdge}, it is clear that
$\widetilde{\cM\Lambda^1}(T)= \cM\Lambda^1(T)$ on Type I polyhedra.
However, as mentioned in the beginning of this section, in general
we do not know whether the alternative construction fits into a
discrete exact sequence similar to (\ref{eq:minimalComplex3D}) or
not.
\end{rem}

% (Not working)
%\begin{rem} One may alternatively define
%$$
%\begin{aligned}
%\widetilde{\cM\Lambda^1}(T) &= span\{\tW_{ij},\textrm{ for
%}e_{ij}\in\cE^+\}, \\
%\widetilde{\cM\Lambda^1}(T) &= \curl \widetilde{\cM\Lambda^1}(T) +
%span\{\vx-\vx_*\}.
%\end{aligned}
%$$
%Such a construction will ensure $\cP_1^-\Lambda^k(T)\subset
%\widetilde{\cM\Lambda^k}(T)$, for $k=1,2$ on all polyhedra
%satisfying Assumption 1-2, that is, we are no longer restricted to
%Type I and II polyhedra. Moreover, this alternative construction is
%identical to the original construction on Type I polyhedra. However,
%under this alternative construction, we do not know whether $\nabla
%\cM\Lambda^0(T)\subset \widetilde{\cM\Lambda^1}(T)$ or not. The
%alternative construction and its properties are illustrated in the
%following diagram: for all $T$ satisfying Assumption 1-2,
%$$
%\begin{aligned}
%0 \xrightarrow{\al} \bbR \xrightarrow[\al]{\subset} &\cM\Lambda^1(T)
%\quad\cdots &&\widetilde{\cM\Lambda^1}(T) \xrightarrow[\al]{\curl}
%&&\widetilde{\cM\Lambda^2}(T)
%\xrightarrow[\al]{\div} \bbR \xrightarrow{\al} 0 \\
%&\quad\Big{\uparrow}\cap  &&\quad\Big{\uparrow}\cap && \quad\Big{\uparrow}\cap\\
%0 \xrightarrow{\al} \bbR \xrightarrow[\al]{\subset}
%&\cP_1^-\Lambda^1(T) \xrightarrow[\al]{\grad} &&\cP_1^-\Lambda^1(T)
%\xrightarrow[\al]{\curl} &&\cP_1^-\Lambda^2(T)
%\xrightarrow[\al]{\div} \bbR \xrightarrow{\al} 0
%\end{aligned}
%$$
%Note there is a broken link in the above diagram.
%\end{rem}

%%%%%%%%%%%%%%%%%%%%%%%%%%%%%%%%%%%%%%%%%%%%%%%%%%%%%%%%%%%%%%%%%%
\subsection{Examples}
We show that our construction reproduces known $H(\curl)$ and
$H(\div)$ elements on tetrahedra, rectangular boxes, pyramids, and
triangular prisms. Then, we shall construct elements on a regular
octahedron, which has never been done before.

In the construction, basis functions are computed according to the
proof of lemmas \ref{lem:Hdivbasis3D} and \ref{lem:Hcurlbasis3D},
which is given in Appendix \ref{appendix}. Wachspress coordinates
are used to define $\lambda_i$. The computation can be done using
any computer algebra system. The results are listed below:
\begin{enumerate}
\item On any tetrahedron, there exists a unique set of barycentric coordinates. One can indeed easily prove that
$\cM\Lambda^k(T)=\mathcal{W}\Lambda^k(T) =
\mathcal{P}_1^-\Lambda^k(T)$ for $k=0,1,2$. No computation is
needed.
\item On a rectangular box $(0,h_1)\times (0,h_2)\times (0,h_3)$, by
using the standard tensor product basis:
$$
\begin{aligned}
\lambda_1 &= \frac{(h_1-x)(h_2-y)(h_3-z)}{h_1h_2h_3}, \quad
&\lambda_2 &= \frac{x(h_2-y)(h_3-z)}{h_1h_2h_3}, \\ \lambda_3 &=
\frac{xy(h_3-z)}{h_1h_2h_3},\quad & \lambda_4 &=
\frac{(h_1-x)y(h_3-z)}{h_1h_2h_3}, \\
\lambda_5 &= \frac{((h_1-x)(h_2-y)z}{h_1h_2h_3}, \quad &\lambda_6 &=
\frac{x(h_2-y)z}{h_1h_2h_3}, \\ \lambda_7 &=
\frac{xyz}{h_1h_2h_3},\quad & \lambda_8 &=
\frac{(h_1-x)yz}{h_1h_2h_3},
\end{aligned}
$$
Our construction gives
$$
\begin{aligned}
\cM\Lambda^1(T) &= Q_{0,1,1}\times Q_{1,0,1}\times Q_{1,1,0}, \\
\cM\Lambda^2(T) &= Q_{1,0,0}\times Q_{0,1,0}\times Q_{0,0,1},
\end{aligned}
$$
where $Q_{I,J,K} = span\{x^iy^jz^k,\, 0\le i\le I,\, 0\le j\le J,
\,0\le k\le K\}$. These are identical to the lowest order
N\'{e}d\'{e}lec element defined in \cite{Nedelec80}.

Through the calculation, we also notice that on a rectangular box,
the spaces $\cM\Lambda^1(T)$ and $\cM\Lambda^2(T)$ are much smaller
than the spaces $\mathcal{W}\Lambda^1(T)$ and
$\mathcal{W}\Lambda^2(T)$ constructed in \cite{Gillette14}. For
example, one can easily see that $W_{12}\in \mathcal{W}\Lambda^1(T)$
but not in $\cM\Lambda^1(T)$. This indicates that there do exist
redundant components in $\mathcal{W}\Lambda^1(T)$ and
$\mathcal{W}\Lambda^2(T)$.

\item On a pyramid our construction is identical to the Whitney
elements constructed by Gr\v{a}dinaru and Hiptmair in
\cite{Hiptmair99}, if starting from the same $\cM\Lambda^0(T)$ as in
\cite{Hiptmair99}. Since $\cE_I=\emptyset$, one can use the
simplification given in Remark \ref{rem:noInternalEdge}, which
coincides with the construction process in \cite{Hiptmair99}. Thus
we omit the details here.
\item On a triangular prism with base defined by $(0,0)$, $(1,0)$, $(0,1)$
and the vertical limits $0<z<1$, we use the following barycentric
coordinates:
$$
\begin{aligned}
\lambda_1 &= (1-x-y)(1-z), \qquad &\lambda_2 &= x(1-z),\qquad
&\lambda_3 &= y(1-z), \\
\lambda_4 &= (1-x-y)z, \qquad &\lambda_5 &= xz,\qquad &\lambda_6 &=
yz.
\end{aligned}
$$
Our construction gives
$$
\begin{aligned}
\cM\Lambda^1(T) &= \left\{ \begin{bmatrix} (a_1-a_3y) + (a_4-a_6y)z
\\(a_2+a_3x)+(a_5+a_6x)z\\a_7+a_8x+a_9z\end{bmatrix},\, a_i\in\bbR\textrm{ for }1\le i\le 9 \right\}, \\
\cM\Lambda^2(T) &= \left\{ \begin{bmatrix} a_1x+a_2\\a_1y+a_3\\
a_4z+a_5 \end{bmatrix},\, a_i\in\bbR\textrm{ for }1\le i\le 5
\right\},
\end{aligned}
$$
which is identical to the lowest order elements on triangular prism
constructed by N\'{e}d\'{e}lec in \cite{Nedelec86}.

\item Consider a regular octahedron,
with vertices $\vv_1:\:(0,0,-1)$, $\vv_2:\:(1,0,0)$,
$\vv_3:\:(0,1,0)$, $\vv_4:\: (-1,0,0)$, $\vv_5:\:, (0,-1,0)$ and
$\vv_6:\:(0,0,1)$. The analytical form of basis functions would be
to complicated to be enclosed in this paper, or to be analyzed
directly. Here we draw the graph of two basis functions for $\cM
\Lambda^2(T)$ in Figure \ref{fig:octaBasis}. In Matlab, we are also
able to show that $\bbR^3 \subset \cM\Lambda^2(T)$ by computing
certain linear combinations of the basis functions on a fine enough
point grid, that reproduces constant vectors $[1,0,0]^t$,
$[0,1,0]^t$ and $[0,0,1]^t$ on all grid points. This numerically
verifies that $\bbR^3 \subset \cM\Lambda^2(T)$, which agrees with
the theoretical result.

\begin{figure}[h]
\begin{center}
\includegraphics[width=5cm]{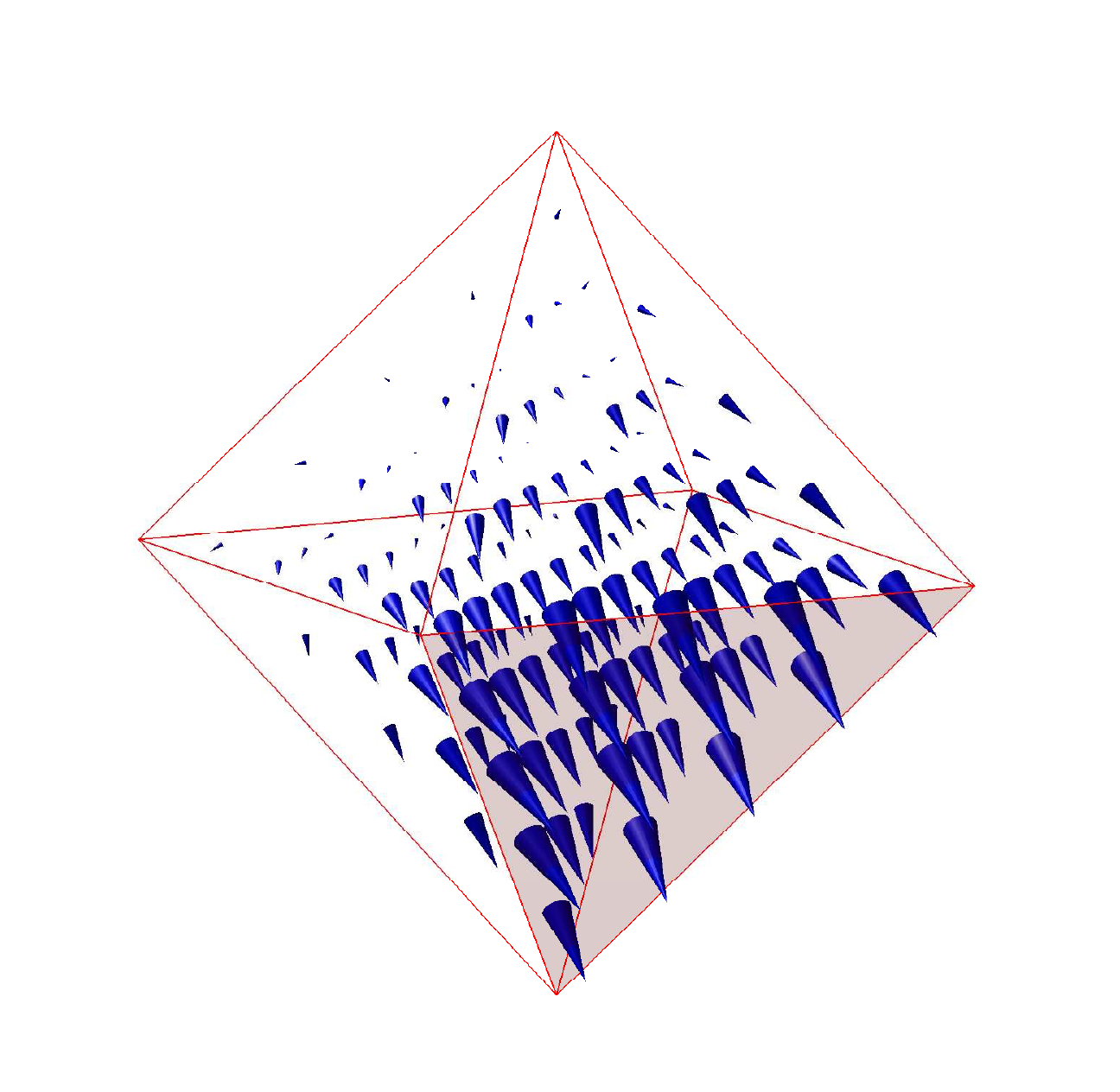}
\includegraphics[width=5cm]{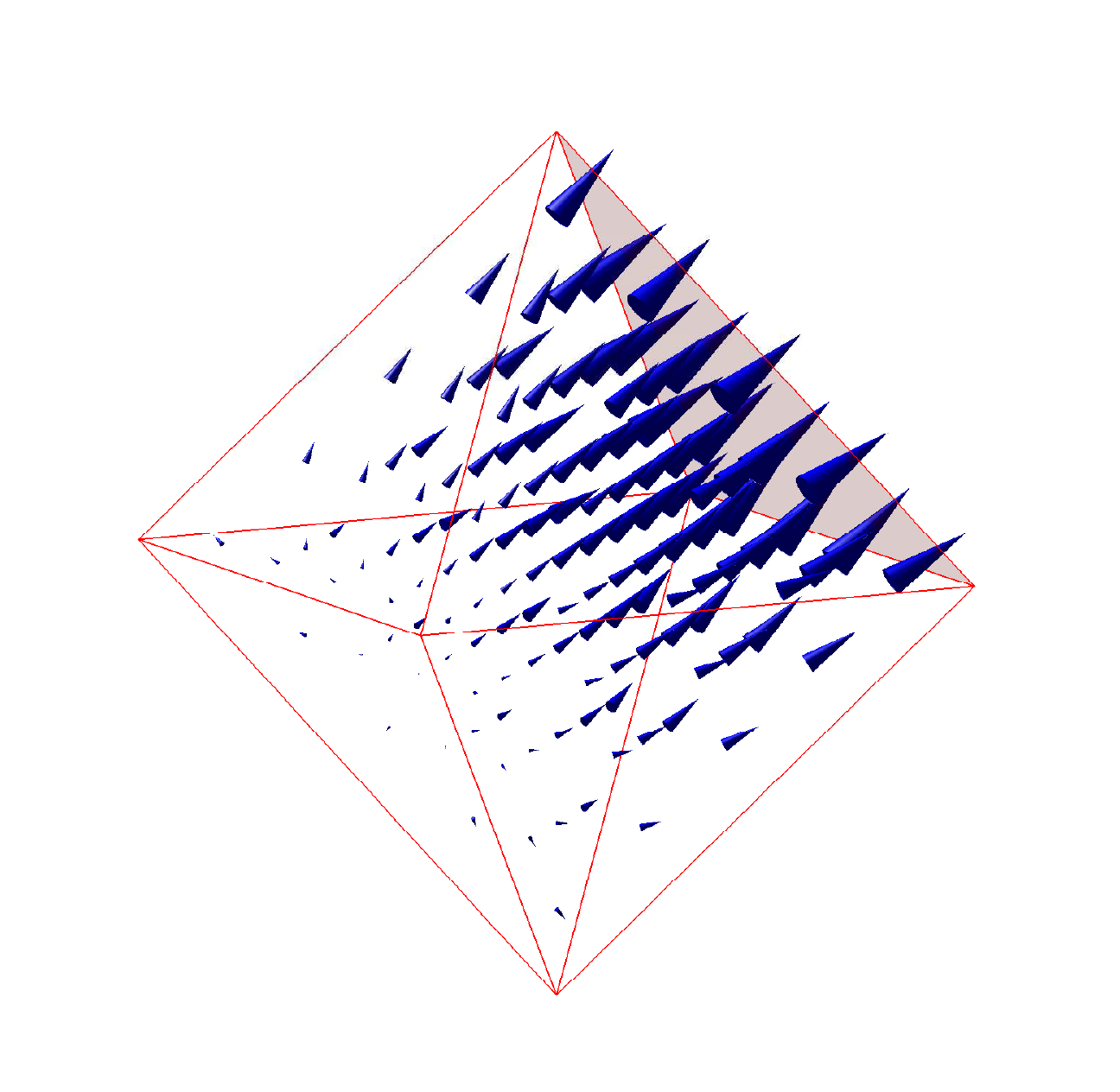}
\caption{Two basis functions for $\cM \Lambda^2(T)$ on the regular
octahedron. The normal component of the basis function is equal to
$1$ on the shaded face and $0$ on all other faces.}
\label{fig:octaBasis}
\end{center}
\end{figure}

\end{enumerate}

We end this section with a brief discussion of elements on general
hexahedra. Similar to the 2D quadrilateral case, the lowest order
Raviart-Thomas element can be defined on hexahedra via Piola
transform associated to a trilinear isomorphism, but requires
asymptotically parallelepiped grid \cite{Bermudez05} in order to
have good approximation rate. More results on the general hexahedral
N\'{e}d\'{e}lec-Raviart-Thomas elements can be found in the recent
work \cite{Falk11} and references therein. In 3D, it is also
possible for the image of the cube under a trilinear isomorphism to
have non-planar faces. By working on the physical hexahedra
directly, we can avoid this problem completely. However, a general
hexahedron does not satisfy Assumption 2, and does not belong to
either Type I or II. Nevertheless, a quick examination shows that
$\widetilde{\cM\Lambda^1}(T)$ from Remark
\ref{rem:alternativeConstruction} is still well-defined. Similar to
the proof of Lemma \ref{lem:tWcontainP1-} but requiring a more
subtle treatment on $e_{kl}\in \cE_F$, one can still show that
$\widetilde{\cM\Lambda^1}(T)$ contains $\cP_1^-\Lambda^1(T)$ and
consequently its curl contains $\bbR^3$. Therefore,
$\widetilde{\cM\Lambda^1}(T)$ may be used to build $H(\curl)$
conforming finite element spaces on hexahedral meshes. In contrast,
the situation for $\cM\Lambda^2(T)$ is much more complicated, as we
may not be able to keep the normal components on faces to be
constants. Hence it remains a topic for future research.

%%%%%%%%%%%%%%%%%%%%%%%%%%%%%%%%%%%%%%%%%%%%%%%%%%%%%%%%%%%%%%%%%%
%%%%%%%%%%%%%%%%%%%%%%%%%%%%%%%%%%%%%%%%%%%%%%%%%%%%%%%%%%%%%%%%%%
%%%%%%%%%%%%%%%%%%%%%%%%%%%%%%%%%%%%%%%%%%%%%%%%%%%%%%%%%%%%%%%%%%
\appendix
\section{Adjacency matrices of a convex polyhedron}
\label{sec:adjacencyMatrices}

For a convex polyhedron $T$, we introduce a few integer-valued
matrices related to the shape of the polyhedron. For convenience,
let us temporarily index the edges in $\cE^+$ by $e_j$, for $1\le
j\le \#E$, and the faces in $\cF$ by $f_k$, for $1\le k\le \#F$.
Such kind of edge and face indices are only used in this section. In
other parts of the paper, we do not index edges or faces of a
polyhedron $T$ by a single integer, in order not to be confused with
the integer indices for vertices.

Define matrices
$$
\begin{aligned}
&A^{FtoE}:\:\bbR^{\#F} \to \bbR^{\#E},\quad&&\textrm{such that }
A^{FtoE}_{ij} = \begin{cases} 1\quad&\textrm{if }e_{i}\in\partial f_j\\ -1\quad&\textrm{if }e_{i}\in -\partial f_j\\
0\quad&\textrm{otherwise}
\end{cases}, \\
&A^{VtoE}:\:\bbR^{\#V} \to \bbR^{\#E},\quad&&\textrm{such that }
A^{VtoE}_{ij} = \begin{cases} -1\quad&\textrm{if }e_{i}\textrm{ starts from } \vv_j\\ 1\quad&\textrm{if }e_{i}\textrm{ ends at }\vv_j\\
0\quad&\textrm{otherwise}
\end{cases}. \\
\end{aligned}
$$

For each face $f_i\in \cF$, denote by $n(f_i)$ the number of edges
in $f_i$. For each $\vv_i\in\cV$, denote by $n(\vv_i)$ the number of
edges connected to $\vv_i$. Define $M^F =(A^{FtoE})^t A^{FtoE} \in
\bbR^{\#F\times\#F}$ and $M^V =(A^{VtoE})^t A^{VtoE} \in
\bbR^{\#V\times\#V}$. It is not hard to see that the entries of
$M^F$ and $M^V$ are
$$
M^F_{ij} = \begin{cases} n(f_i)\quad&\textrm{if } i=j, \\
-1 \quad&\textrm{if }f_i,\, f_j\textrm{ share an edge}, \\
0\quad&\textrm{otherwise},
\end{cases}
$$
and
$$
M^V_{ij} = \begin{cases} n(\vv_i)\quad&\textrm{if } i=j, \\
-1 \quad&\textrm{if }\vv_i,\, \vv_j\textrm{ are connected by an edge}, \\
0\quad&\textrm{otherwise}.
\end{cases}
$$

To study the rank of $M^F$ and $M^V$, let us first state a
well-known result:
\begin{lem} \label{lem:matrixRank}
Let $M$ be an irreducible and (weakly) diagonally dominant square
matrix, then $M$ either has full rank or a rank $1$ deficiency.
\end{lem}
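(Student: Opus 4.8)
The plan is to prove the equivalent statement that $\ker M$ has dimension at most one, i.e. $\operatorname{rank} M \ge n-1$ for an $n\times n$ matrix $M$. The key observation I would establish first is that every nonzero null vector of $M$ is \emph{flat}: all of its entries have the same absolute value. Suppose $Mx = 0$ with $x\neq 0$ and choose an index $i$ attaining $|x_i| = \max_k |x_k| > 0$. From the $i$-th equation $M_{ii}x_i = -\sum_{j\neq i}M_{ij}x_j$ one gets the chain
\[
|M_{ii}|\,|x_i| = \Big|\sum_{j\neq i}M_{ij}x_j\Big| \le \sum_{j\neq i}|M_{ij}|\,|x_j| \le |x_i|\sum_{j\neq i}|M_{ij}| \le |M_{ii}|\,|x_i|,
\]
where the final step is weak diagonal dominance, so equality holds throughout. (Here $M_{ii}\neq 0$: if $M_{ii}=0$ then dominance forces the whole $i$-th row to vanish, contradicting irreducibility once $n\ge 2$; the case $n=1$ is trivial.) Equality in the middle inequality yields $|x_j| = |x_i|$ for every $j$ with $M_{ij}\neq 0$, so each such $j$ is itself a maximizer.

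Next I would run the propagation argument: the set $S$ of indices attaining the maximum absolute value is, by the previous step applied at each of its members, closed under the adjacency relation $k\mapsto\{j: M_{kj}\neq 0\}$. Since $M$ is irreducible, this adjacency digraph is strongly connected, so starting from $i\in S$ we reach every vertex and conclude $S = \{1,\dots,n\}$, i.e. $|x_1| = \cdots = |x_n|$.

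Finally I would deduce the rank dichotomy. Suppose for contradiction that $\dim\ker M \ge 2$ and pick linearly independent $x,y\in\ker M$. Flatness of $y$ forces all its entries to be nonzero, so in particular $y_1\neq 0$ and $z := x - (x_1/y_1)\,y$ is a well-defined element of $\ker M$ with $z_1 = 0$. If $z\neq 0$, then flatness of $z$ says all its entries have a common nonzero absolute value, contradicting $z_1 = 0$; hence $z = 0$, so $x$ and $y$ are proportional, contradicting independence. Therefore $\dim\ker M \le 1$, which is the asserted full-rank-or-rank-$1$-deficiency alternative.

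I expect the only delicate point to be the propagation step, where one must notice that \emph{every} maximizing index is again eligible for the same estimate (so that the set $S$ really is forward-closed) and then translate irreducibility into strong connectivity of the adjacency digraph to force $S$ to be everything. Everything else is a short elementary computation, so this is the main—and rather mild—obstacle.
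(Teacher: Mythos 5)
Your proof is correct, but it takes a genuinely different route from the paper's. The paper's argument is a two-line reduction: it quotes the classical fact (Taussky's theorem) that an \emph{irreducibly diagonally dominant} matrix --- irreducible, weakly dominant, and strictly dominant in at least one row --- is nonsingular, notes that changing a single entry of $M$ produces such a matrix, and concludes because a rank-one modification changes the rank by at most one. You instead bound the kernel directly via the maximum-propagation argument: every null vector is flat, irreducibility (strong connectivity of the adjacency digraph) spreads the maximizing set to all indices, and two independent flat null vectors would produce a nonzero null vector with a zero entry. Your chain of inequalities, the forward-closure of the maximizing set, and the final linear-algebra step are all sound, and your handling of the degenerate cases ($M_{ii}=0$ forcing a zero row, excluded by irreducibility for $n\ge 2$; the trivial $n=1$ case) is fine. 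What each approach buys: the paper's proof is shorter on the page but leans on a quoted nonsingularity criterion whose standard proof is essentially your propagation argument, so your version is self-contained. Yours also delivers the extra fact that every kernel vector has entries of constant modulus, which would make the identification of the kernels as $span\{[1,\ldots,1]^t\}$ in Lemmas \ref{lem:MFrank}--\ref{lem:MVrank} nearly immediate, whereas the paper obtains that by exhibiting the all-ones vector explicitly and counting ranks.
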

\begin{proof}
For reader's convenience, we provide a brief proof below. A square
matrix is called irreducibly diagonally dominant if it is
irreducible, weakly diagonally dominant but in at least one row is
strictly diagonally dominant. Irreducibly diagonally dominant
matrices are non-singular. Now, by changing only one entry in any
chosen row of $M$, we can make it irreducibly diagonally dominant.
Since changing one row of a matrix can at most modify its rank by
$1$, therefore $M$ must either have full rank or a rank $1$
deficiency.
\end{proof}

Then, we have
\begin{lem} \label{lem:MFrank}
Matrix $M^F$ has rank $(\#F-1)$, and $Ker(M^F) =
span\{[1,1\ldots,1]^t\}$.
\end{lem}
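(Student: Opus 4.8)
The plan is to combine the general rank result of Lemma \ref{lem:matrixRank} with the graph-theoretic structure of a convex polyhedron. First I would observe that $M^F$ is exactly the (weighted) graph Laplacian of the \emph{face-adjacency graph} $G^F$ of $T$, whose vertices are the faces of $T$ and whose edges connect two faces sharing a common edge of $T$: the diagonal entry $M^F_{ii}=n(f_i)$ is the degree of $f_i$ in $G^F$, the off-diagonal entry is $-1$ precisely when two faces are adjacent, and $0$ otherwise. Every graph Laplacian is symmetric positive semi-definite and weakly diagonally dominant (each row sums to zero), so $M^F$ is weakly diagonally dominant with $[1,1,\ldots,1]^t$ in its kernel; this immediately gives $[1,\ldots,1]^t\in\mathrm{Ker}(M^F)$, hence $\mathrm{rank}(M^F)\le \#F-1$.

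Next I would establish that $G^F$ is connected, which is the crucial structural input. For a convex polyhedron, the boundary $\partial T$ is topologically a sphere, and any two faces can be joined by a path of faces in which consecutive faces share an edge; equivalently, one can walk from any face to any adjacent face across their common edge and so reach every face. Connectedness of $G^F$ is exactly irreducibility of $M^F$. With $M^F$ irreducible and weakly diagonally dominant, Lemma \ref{lem:matrixRank} applies and tells us $M^F$ is either full rank or has a rank-$1$ deficiency; since we already exhibited a nonzero kernel vector, the deficiency is exactly $1$, so $\mathrm{rank}(M^F)=\#F-1$.

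Finally, to pin down $\mathrm{Ker}(M^F)=\mathrm{span}\{[1,\ldots,1]^t\}$ precisely (rather than merely having dimension $1$ containing this vector), I would use the factorization $M^F=(A^{FtoE})^t A^{FtoE}$: a vector $v$ lies in $\mathrm{Ker}(M^F)$ iff $A^{FtoE}v=0$. Reading off the $e_i$-th component of $A^{FtoE}v$, since each edge $e_i$ of $T$ lies on exactly two faces, say $f_j$ and $f_k$, and appears with opposite signs $\pm1$ relative to their oriented boundaries (the two incident faces induce opposite orientations on the shared edge), the condition $A^{FtoE}v=0$ reads $v_j-v_k=0$ for every edge. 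Since $G^F$ is connected, this forces all components of $v$ to be equal, so $\mathrm{Ker}(M^F)=\mathrm{span}\{[1,\ldots,1]^t\}$ and the rank count follows again. The main obstacle is the connectedness/irreducibility argument and, relatedly, verifying the opposite-sign incidence claim for $A^{FtoE}$ — both are geometrically evident for convex polyhedra but deserve a careful sentence; everything else is routine linear algebra.
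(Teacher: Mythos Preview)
Your proposal is correct and follows essentially the same route as the paper: identify $M^F$ as the Laplacian of the face-adjacency graph, use connectedness of that graph (irreducibility) together with weak diagonal dominance to invoke Lemma~\ref{lem:matrixRank}, and exhibit $[1,\ldots,1]^t$ in the kernel. Your final paragraph using the factorization $M^F=(A^{FtoE})^tA^{FtoE}$ is an unnecessary extra step---once you know $\dim\mathrm{Ker}(M^F)=1$ and that $[1,\ldots,1]^t$ lies in it, the kernel is automatically its span---but it does no harm and in fact gives an independent argument that bypasses Lemma~\ref{lem:matrixRank} entirely.
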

\begin{proof}
By using the adjacency graph of the faces of $T$, it is not hard to
see that $M^F$ is irreducible. Since the number of faces adjacent to
each given face $f_i$ is equal to $n(f_i)$, we know that $M^F$ is
weakly diagonally dominant. By Lemma \ref{lem:matrixRank}, $M^F$
either has full rank or a rank $1$ deficiency. Indeed, $M$ has a
rank $1$ deficiency, since one can explicitly compute that
$[1,1\ldots,1]^t\in Ker(M^F)$. This completes the proof of the
lemma.
\end{proof}

\begin{lem} \label{lem:MVrank}
Matrix $M^V$ has rank $(\#V-1)$, and $Ker(M^V) =
span\{[1,1\ldots,1]^t\}$.
\end{lem}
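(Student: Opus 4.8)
The plan is to follow verbatim the strategy used for Lemma \ref{lem:MFrank}, applying Lemma \ref{lem:matrixRank} to $M^V$. First I would observe that $M^V$ is exactly the graph Laplacian of the edge graph (the $1$-skeleton) of $T$: the off-diagonal pattern records which pairs of vertices are joined by an edge of $T$, while the $i$-th diagonal entry $n(\vv_i)$ is precisely the degree of $\vv_i$ in that graph. This identification makes the two structural hypotheses of Lemma \ref{lem:matrixRank} easy to check.

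Next, irreducibility: the $1$-skeleton of a convex polyhedron is a connected graph (in fact $3$-connected by Steinitz's theorem, but plain connectedness is all we need), so for any partition of the vertex index set into two nonempty blocks there is an edge of $T$ joining the blocks, hence a nonzero entry of $M^V$ linking them; thus $M^V$ is irreducible. Weak diagonal dominance is immediate from the explicit description of $M^V_{ij}$: in row $i$ the off-diagonal entries are exactly $n(\vv_i)$ copies of $-1$, one for each neighbor of $\vv_i$, so $\sum_{j\ne i}|M^V_{ij}| = n(\vv_i) = M^V_{ii}$.

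By Lemma \ref{lem:matrixRank}, $M^V$ then has either full rank or a rank-$1$ deficiency. To decide which, I would compute $M^V[1,1,\ldots,1]^t$: its $i$-th entry is $M^V_{ii} + \sum_{j\ne i} M^V_{ij} = n(\vv_i) - n(\vv_i) = 0$, so $[1,1,\ldots,1]^t\in \mathrm{Ker}(M^V)$ and the deficiency is exactly $1$. Therefore $\mathrm{rank}(M^V) = \#V-1$ and $\mathrm{Ker}(M^V) = \mathrm{span}\{[1,1,\ldots,1]^t\}$, as claimed.

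I do not expect a genuine obstacle here: the argument is a direct transcription of the $M^F$ case, and the only input from polytope theory — connectedness of the edge graph of a convex polyhedron — is classical. If one preferred to bypass Lemma \ref{lem:matrixRank}, an alternative would be to use $M^V = (A^{VtoE})^t A^{VtoE}$, so that $\mathrm{Ker}(M^V) = \mathrm{Ker}(A^{VtoE})$, and then invoke the standard fact that the vertex--edge incidence matrix of a connected graph has corank $1$ with kernel spanned by the all-ones vector; routing through Lemma \ref{lem:matrixRank} is chosen only to keep the exposition parallel to Lemma \ref{lem:MFrank}.
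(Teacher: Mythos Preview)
Your proposal is correct and follows precisely the approach the paper intends: the paper's own proof simply reads ``The proof is similar to the proof of Lemma~\ref{lem:MFrank},'' and you have carried out exactly that transcription, checking irreducibility via connectedness of the $1$-skeleton, weak diagonal dominance from the row-sum identity, and then exhibiting $[1,\ldots,1]^t$ in the kernel.
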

\begin{proof}
The proof is similar to the proof of Lemma \ref{lem:MFrank}.
\end{proof}

Finally, we mention another important property of the adjacency
matrices:
\begin{lem} \label{lem:adjacencyOrthognality}
It holds that
\begin{equation} \label{eq:adjacencyOrthognality}
(A^{FtoE})^t A^{VtoE} = \mathbf{0}\quad\textrm{and}\quad
(A^{VtoE})^t A^{FtoE} = \mathbf{0}.
\end{equation}
Indeed, we have
$$
Ker((A^{FtoE})^t) = range(A^{VtoE})\quad\textrm{and}\quad
Ker((A^{VtoE})^t) = range(A^{FtoE}).
$$
\end{lem}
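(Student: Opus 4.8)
The plan is first to establish the two orthogonality identities by a direct incidence–sign computation, and then to upgrade the resulting inclusions to equalities by a dimension count using Lemmas~\ref{lem:MFrank}--\ref{lem:MVrank} together with Euler's formula $\#V-\#E+\#F=2$.

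For the identities, I would compute the generic entry of the $\#F\times\#V$ matrix $(A^{FtoE})^tA^{VtoE}$, namely $\big((A^{FtoE})^tA^{VtoE}\big)_{jk}=\sum_{i=1}^{\#E}A^{FtoE}_{ij}A^{VtoE}_{ik}$. The $i$-th summand vanishes unless the edge $e_i$ both lies on $\partial f_j$ and is incident to $\vv_k$, so the whole sum is empty unless $\vv_k$ is a vertex of the polygon $f_j$. When it is, exactly two edges of the oriented boundary cycle $\partial f_j$ meet $\vv_k$: the one leaving $\vv_k$ and the one entering $\vv_k$ along the cycle. Splitting into the four cases according to whether the prefixed $\cE^+$-orientation of each of these two edges agrees or disagrees with the cycle orientation of $\partial f_j$, one checks case by case that the leaving edge always contributes $-1$ and the entering edge always contributes $+1$, so the entry is $0$. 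Hence $(A^{FtoE})^tA^{VtoE}=\mathbf 0$, and since $(A^{VtoE})^tA^{FtoE}=\big((A^{FtoE})^tA^{VtoE}\big)^t$, that identity follows too. (Conceptually this is just $d\circ d=0$ for the cochain complex of the CW-structure on $\partial T$, with $A^{VtoE}$ the vertex-to-edge coboundary and $(A^{FtoE})^t$ the edge-to-face coboundary; the elementary bookkeeping above avoids invoking that machinery.)

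It remains to turn the inclusions $\mathrm{range}(A^{VtoE})\subseteq Ker((A^{FtoE})^t)$ and $\mathrm{range}(A^{FtoE})\subseteq Ker((A^{VtoE})^t)$, which are immediate from the identities, into equalities, and for this I would match dimensions. A matrix and its Gram matrix have the same rank, so from $M^V=(A^{VtoE})^tA^{VtoE}$, $M^F=(A^{FtoE})^tA^{FtoE}$ and Lemmas~\ref{lem:MVrank}, \ref{lem:MFrank} we get $\mathrm{rank}(A^{VtoE})=\#V-1$ and $\mathrm{rank}(A^{FtoE})=\#F-1$. Consequently $\dim Ker((A^{FtoE})^t)=\#E-(\#F-1)$ and $\dim Ker((A^{VtoE})^t)=\#E-(\#V-1)$, and Euler's formula rewrites these as $\#V-1=\dim\,\mathrm{range}(A^{VtoE})$ and $\#F-1=\dim\,\mathrm{range}(A^{FtoE})$ respectively. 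A subspace contained in another of the same finite dimension must equal it, which yields the two range/kernel identities and completes the proof.

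I expect the dimension count to be routine; the only delicate point is the sign bookkeeping in the entry computation, where one must simultaneously track the cycle orientation of $\partial f_j$ and the fixed $\cE^+$-orientation of each edge. Provided the four cases are tabulated cleanly, the cancellation $-1+1=0$ is transparent, so this is the step I would write out most carefully.
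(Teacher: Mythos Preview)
Your proposal is correct and follows essentially the same route as the paper: establish the orthogonality identity, deduce the inclusions, then use Lemmas~\ref{lem:MFrank}--\ref{lem:MVrank} together with rank--nullity (and Euler's formula, which the paper leaves implicit in ``counting the dimensions'') to upgrade the inclusions to equalities. The only difference is that you spell out the sign bookkeeping for the entry computation and make the appeal to Euler's formula explicit, whereas the paper dispatches both with the phrases ``it is elementary'' and ``counting the dimensions.''
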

\begin{proof}
By using the adjacency relations, it is elementary to prove
(\ref{eq:adjacencyOrthognality}). Consequently, one has
$$
range(A^{VtoE})\subseteq Ker((A^{FtoE})^t) \quad\textrm{and}\quad
range(A^{FtoE})\subseteq Ker((A^{VtoE})^t).
$$
Now, by lemmas \ref{lem:MFrank}-\ref{lem:MVrank}, we have
$rank(A^{VtoE}) = \#V-1$ and $rank(A^{FtoE}) = \#F-1$. The lemma
follows immediately from using the rank-nullity theorem and counting
the dimensions.
\end{proof}
%%%%%%%%%%%%%%%%%%%%%%%%%%%%%%%%%%%%%%%%%%%%%%%%%%%%%%%%%%%%%%%%%%
%%%%%%%%%%%%%%%%%%%%%%%%%%%%%%%%%%%%%%%%%%%%%%%%%%%%%%%%%%%%%%%%%%
%%%%%%%%%%%%%%%%%%%%%%%%%%%%%%%%%%%%%%%%%%%%%%%%%%%%%%%%%%%%%%%%%%
\section{Proof of lemmas \ref{lem:Hdivbasis3D} and \ref{lem:Hcurlbasis3D}} \label{appendix}
To prove Lemma \ref{lem:Hdivbasis3D}, we first denote
$$
\vq_f = c_{f,0} (\vx-\vx_*) + \sum_{\tilde{f}\in \cF}
c_{f,\tilde{f}} \curl\,\tW_{\tilde{f}},
$$
and then show that there exists $\{c_{f,0},\, c_{f,\tilde{f}},\,
\textrm{for } \tilde{f}\in\cF\}$ such that $\vq_f$ satisfies Lemma
\ref{lem:Hdivbasis3D}. Denote by $d_f$ the distance from $\vx_*$ to
face $f$, and by $|T_f| = \frac{1}{3} d_f|f|$ the volume of the
pyramid with base $f$ and apex $\vx_*$. For convenience, denote
$$
\delta_{f,f'} = \begin{cases} 1\quad &\textrm{if }f=f', \\
0&\textrm{otherwise}.
\end{cases}
$$
For each $f' \in \cF$, denote by $\cF(f')$ the set of all faces in
$\cF$ that share an edge with $f'$. Clearly, the number of faces in
$\cF(f')$ is equal to the number of edges of polygon $f'$, which is
denote by $n(f')$. Then, on each $f'\in\cF$, we want $\{c_{f,0},\,
c_{f,\tilde{f}},\, \textrm{for } \tilde{f}\in\cF\}$ to satisfy
\begin{equation} \label{eq:Hdivbasis3D1}
\begin{aligned}
\delta_{f,f'} = \vq_f\cdot \vn_{f'}|_{f'} &= c_{f,0}
(\vx-\vx_*)\cdot \vn_{f'}|_{f'} + \sum_{\tilde{f}\in \cF}
c_{f,\tilde{f}}
\curl\,\tW_{\tilde{f}} \cdot \vn_{f'}|_{f'} \\
&= c_{f,0} d_{f'} + c_{f,f'} \frac{n(f')}{|f'|} - \sum_{\tilde{f}\in
\cF(f')} \frac{c_{f, \tilde{f}}}{|f'|},
\end{aligned}
\end{equation}
where in the last step we have used equations
(\ref{eq:curltWf1})-(\ref{eq:curltWf2}). Multiplying both sides of
(\ref{eq:Hdivbasis3D1}) by $|f'|$ and sum up over all $f'\in\cF$
gives
$$
|f| = \sum_{f'\in \cF} c_{f,0} d_{f'} |f'| + 0 = 3 c_{f,0} |T|,
$$
which implies
$$
c_{f,0} = \frac{|f|}{3|T|}.
$$
Now, Equation (\ref{eq:Hdivbasis3D1}) can be rewritten into, for
each $f'\in\cF$,
$$
n(f') c_{f,f'} - \sum_{\tilde{f}\in \cF(f')}c_{f, \tilde{f}} =
\delta_{f,f'} |f'| - \frac{|T_{f'}|}{|T|} |f|.
$$
This provides a linear system for solving $c_{f,\tilde{f}},\,
\textrm{for all } \tilde{f}\in\cF$, where the coefficient matrix is
exactly $M^F$ defined in Appendix \ref{sec:adjacencyMatrices}. Note
the right-hand side of the above linear system is obviously
orthogonal to $Ker(M^F)$, as
$$
\sum_{f'\in\cF} \left(\delta_{f,f'} |f'| - \frac{|T_{f'}|}{|T|}|f|
\right) = 0.
$$
Therefore the linear system is solvable. This establishes the
existence of $\vq_f$ satisfying $\vq_f\cdot\vn_{f'}|_{f'} =
\delta_{f,f'}$. From the construction we also know that $\vq_f$ is
computable, with details given at the end of this section. Moreover,
$\vq_f$ is indeed uniquely defined since by setting
$c_{f,\tilde{f}}=1$ for all $\tilde{f}\in \cF$, i.e., by making the
coefficients in $Ker(M^F)$, one would get
$$
\sum_{\tilde{f}\in \cF} c_{f,\tilde{f}} \curl\,\tW_{\tilde{f}} =
\curl \sum_{\tilde{f}\in \cF} \tW_{\tilde{f}} = \curl \mathbf{0} =
\mathbf{0},
$$
where we have used the simple fact that $\sum_{f\in \cF} \tW_f =
\mathbf{0}$ according to the definition of $\tW_f$.

It is not hard to see that $\{\vq_f,\,\textrm{for }f\in\cF\}$ is
linearly independent. Again, by using $\sum_{f\in \cF} \tW_f =
\mathbf{0}$, we have
$$
\begin{aligned}
\dim \cM\Lambda^2(T) & \le \dim \curl\left( span\{\tW_f,\,
f\in\cF\}\right) +
\dim span\{\vx - \vx_*\} \\
&\le \dim span\{\tW_f,\, f\in\cF\} + 1 \\
&\le (\#F-1) + 1 = \#F.
\end{aligned}
$$
Combining the above, $\{\vq_f,\,\textrm{for }f\in\cF\}$ must form a
basis for $\cM\Lambda^2(T)$ and consequently $\dim \cM\Lambda^2(T) =
\#F$. This completes the proof of Lemma \ref{lem:Hdivbasis3D}.

Next we prove Lemma \ref{lem:Hcurlbasis3D}. The idea is similar to
the proof of Lemma \ref{lem:Hdivbasis3D}. We express
$$ \vp_e = \sum_{i=1}^n a_{e,i} \nabla\lambda_i + \sum_{f\in\cF}
b_{e,f} \tW_f.
$$
Now, let $e'\in \cE^+$. Denote by $\vv_{\alpha}$ and $\vv_{\beta}$
the starting and ending vertices of $e'$, and by $f_l$/$f_r$ the
faces to the left/right of edge $e'$, seeing from outside of $T$.
Then, by Assumption 1, Lemma \ref{lem:twijOnEdges} and the
definition of $\tW_f$, one has
$$
\begin{aligned}
\delta_{e,e'} &= \vp_e\cdot\vt_{e'}|_{e'} = \sum_{i=1}^n a_{e,i}
\nabla\lambda_i \cdot\vt_{e'}|_{e'} + \sum_{f\in\cF} b_{e,f} \tW_f
\cdot\vt_{e'}|_{e'} \\
&= \frac{-a_{e,\alpha} + a_{e,\beta}}{|e'|} + \frac{b_{e,f_l} -
b_{e,f_r}}{|e'|},
\end{aligned}
$$
which we further rewrite into
\begin{equation} \label{eq:matrixA}
-a_{e,\alpha} + a_{e,\beta} + b_{e,f_l} - b_{e,f_r} = \delta_{e,e'}
|e'|.
\end{equation}
The above equation holds on every $e'\in\cE^+$, and thus gives us a
linear system with $\#E$ equations and $\#V+\#F = \#E+2$ unknowns.
Denote by $A:\: \bbR^{\#E+2} \to \bbR^{\#E}$ the coefficient matrix
of this linear system. It is not hard to see that, under proper
ordering, one has
$$
A = [A^{VtoE} \; A^{FtoE}],
$$
where $A^{VtoE}$ and $A^{FtoE}$ are as defined in Appendix
\ref{sec:adjacencyMatrices}.

By Lemma \ref{lem:adjacencyOrthognality}, we have
$$
A^tA = \begin{bmatrix}M^V & \mathbf{0} \\ \mathbf{0} &
 M^F \end{bmatrix} \in\bbR^{(\#E+2)\times(\#E+2)}.
$$
Consequently, by lemmas \ref{lem:MFrank}-\ref{lem:MVrank}, we know
that $rank(A) = rank(A^tA) = (\#V-1)+ (\#F-1) = \#E$ and $Ker (A)$
is spanned by the following two vectors:
\begin{equation} \label{eq:KerM}
\begin{aligned}
&[1,1,\ldots,1,0,0,\ldots,0]^t, \quad \textrm{with }(\#V)\, 1's \textrm{ and }(\#F)\, 0's, \\
&[0,0,\ldots,0,1,1,\ldots,1]^t, \quad \textrm{with }(\#V)\, 0's
\textrm{ and }(\#F)\, 1's.
\end{aligned}
\end{equation}
Then, the linear system (\ref{eq:matrixA}) is solvable. Moreover, we
realize that $\vp_e$ is indeed uniquely defined, as all coefficients
in $Ker (A)$ only generate zero functions because
$\sum_{i=1}^n\nabla\lambda_i = \mathbf{0}$ and $\sum_{f\in \cF}
\tW_f = \mathbf{0}$.

We can similarly show that $\{\vp_e,\,\textrm{for }e\in\cE^+\}$ is
linearly independent, and thus by counting dimensions, it form a
basis for $\cM\Lambda^1(T)$. This completes the proof of Lemma
\ref{lem:Hcurlbasis3D}.

Finally, we briefly discuss how to compute the basis functions in
practice. Using elementary linear algebra, it is not hard to see
that:
\begin{enumerate}
\item To compute $\vq_f$, one needs to solve a linear system $M^F\vu =
\vb$, where $M^F \in \bbR^{\#F\times\#F}$ has a non-trivial kernel
containing all constant vectors, and $\vb\in Ker(M^F)^{\perp} =
Range(M^F)$. Indeed, solving $M^F \vu = \vb$ is equivalent to
solving a non-singular square system
$$
\begin{bmatrix} M^F & \mathbf{1} \\ \mathbf{1}^t & 0\end{bmatrix}
\begin{bmatrix}\vu \\ 0\end{bmatrix} = \begin{bmatrix}\vb \\
0\end{bmatrix},
$$
where $\mathbf{1}$ denote a constant column vector with all entries
equal to $1$.
\item To compute $\vq_e$, one needs to solve a linear system $A\vu =
\vb$, where $A = [A^{VtoE}\, A^{FtoE}] \in \bbR^{\#E\times(\#E+2)}$
has rank $\#E$ and kernel spanned by vectors in (\ref{eq:KerM}).
Indeed, solving $A\vu = \vb$ is equivalent to solving  a
non-singular square system
$$
\begin{bmatrix} A^{VtoE} & A^{FtoE} \\ \mathbf{1}^t & \mathbf{0}^t \\ \mathbf{0}^t & \mathbf{1}^t \end{bmatrix}
\vu  = \begin{bmatrix}\vb \\
0 \\0 \end{bmatrix}.
$$
\end{enumerate}

%%%%%%%%%%%%%%%%%%%%%%%%%%%%%%%%%%%%%%%%%%%%%%%%%%%%%%%%%%%%%%%%%%%
%%%%%%%%%%%%%%%%%%%%%%%%%%%%%%%%%%%%%%%%%%%%%%%%%%%%%%%%%%%%%%%%%%%
%%%%%%%%%%%%%%%%%%%%%%%%%%%%%%%%%%%%%%%%%%%%%%%%%%%%%%%%%%%%%%%%%%%
\section{Proof of Lemma \ref{lem:ContainingR}} \label{appendix2}
%By (\ref{eq:GBCLinearPrecision}), it is clear that
%$\cP_1^-\Lambda^0(T) \subset \cM\Lambda^0(T)$. Therefore we focus on
%the proof for $k=1$ and $2$.

For Type I polyhedra, the proof is easy. By Remark
\ref{rem:noInternalEdge}, we have $\vp_{e_{ij}}=|e_{ij}|\tW_{ij}$ on
each $e_{ij}\in\cE^+$. Thus by Lemma \ref{lem:tWcontainP1-}, one
immediately gets $\cP_1^-\Lambda^1(T) \subseteq \cM\Lambda^1(T)$.
This, together with the fact that $\curl (\va\times \vx) = 2\va$ for
all $\va\in\bbR^3$, implies that $\bbR^3\subset \cM\Lambda^2(T)$.
Finally, since $span\{\vx-\vx_*\} \subset \cM\Lambda^2(T)$, we have
$\cP_1^-\Lambda^2(T)\subseteq \cM\Lambda^2(T)$.

Now let us consider Type II polyhedra. From the proof of Lemma
\ref{lem:tWcontainP1-}, one has
$$
2\sum_{e_{ij}\in\cE^+} ((\va\times (\vv_i-\vx_c)) \cdot\vtau_{ij})
\tW_{ij} = \sum_{e_{ij}\in\cE} ((\va\times (\vv_i-\vx_c))
\cdot\vtau_{ij}) \tW_{ij} = 2\va\times (\vx-\vx_c),
$$
for all $\va\in\bbR^3$.  If we can show that
\begin{equation} \label{eq:Appb1}
\sum_{e_{ij}\in\cE^+} ((\va\times (\vv_i-\vx_c)) \cdot\vtau_{ij})
\tW_{ij} = \sum_{f\in\cF} C_f \tW_f \in \cM\Lambda^1(T),
\end{equation}
this together with the face that $\bbR^3\subset \nabla
\cM\Lambda^0(T) \subset \cM\Lambda^1(T)$ will imply
$\cP_1^-\Lambda^1(T)\subseteq \cM\Lambda^1(T)$. And consequently one
will be able to prove that $\cP_1^-\Lambda^2(T)\subseteq
\cM\Lambda^2(T)$. Next, we focus on prove the existence of a set of
coefficients $\{C_f,\textrm{ for }f\in\cF\}$ that satisfies
(\ref{eq:Appb1}).

For each $e_{ij}\in\cE^+$, denote by $f^{ij}_l$ and $f^{ij}_r$ the
faces on the left and right side of $e_{ij}$ respectively, seeing
from outside of $T$. Notice that the right-hand side of Equation
(\ref{eq:Appb1}) can further be written into $\sum_{e_{ij}\in\cE^+}
(C_{f^{ij}_l} - C_{f^{ij}_r}) \tW_{ij}$. Thus it remains to prove
that the system
\begin{equation} \label{eq:Appb2}
C_{f^{ij}_l} - C_{f^{ij}_r} = (\va\times (\vv_i-\vx_c))
\cdot\vtau_{ij},\qquad\textrm{for all }e_{ij}\in\cE^+,
\end{equation}
is solvable. The coefficient matrix of system (\ref{eq:Appb2}) is
exactly $A^{FtoE}$, as defined in Appendix
\ref{sec:adjacencyMatrices}. Denote the right-hand side vector of
system (\ref{eq:Appb2}) by
$$\vb = [(\va\times (\vv_i-\vx_c))
\cdot\vtau_{ij}]_{e_{ij}\in\cE^+} \in \bbR^{\#E}.
$$
The linear system (\ref{eq:Appb2}) is solvable only if
$$
\vb \in Range(A^{FtoE}) = Ker((A^{FtoE})^t)^\perp.
$$
By Lemma \ref{lem:adjacencyOrthognality}, we have
$Ker((A^{FtoE})^t)^\perp = range(A^{VtoE})^{\perp} =
Ker((A^{VtoE})^t)$. Therefore, System (\ref{eq:Appb2}) is solvable
as long as $(A^{VtoE})^t \vb = \mathbf{0}$, which can be explicitly
written as
\begin{equation} \label{eq:Appb3}
\sum_{j,\,e_{ij}\in\cE^+} b_{ij} - \sum_{j,\,e_{ji}\in\cE^+} b_{ji}
= 0,\qquad\textrm{for all }1\le i\le n,
\end{equation}
where we conveniently denote by $b_{ij}$ the entry of vector $\vb$
corresponding to $e_{ij}\in\cE^+$. According to (\ref{eq:Appb2}),
$b_{ij}$ can be viewed as the jump of coefficient $C_f$ across the
edge $e_{ij}$. Thus the constraints given by (\ref{eq:Appb3}) are
equivalent to say that, the summation of such jumps over all edges
connecting to one given vertex should be $0$. By the definition of
$\vb$ and the fact that $\vv_i\times\vtau_{ij} =
\vv_j\times\vtau_{ij}$, Equation (\ref{eq:Appb3}) is equivalent to
$$
\sum_{j,\,e_{ij}\in\cE^+} (\va\times (\vv_i-\vx_c)) \cdot\vtau_{ij}
 - \sum_{j,\,e_{ji}\in\cE^+} (\va\times (\vv_j-\vx_c)) \cdot\vtau_{ji}
  = \sum_{j,\,e_{ij}\in\cE} (\va\times (\vv_i-\vx_c))
  \cdot\vtau_{ij} = 0,
$$
for all $1\le i\le n$, which is true on Type II polyhedra. In other
words, we have shown that for Type II polyhedra, Equation
(\ref{eq:Appb2}) is solvable. This completes the proof of Lemma
\ref{lem:ContainingR}.

% ----------------------------------------------------------------
\bibliographystyle{amsplain}

\end{document}